\theoremstyle{plain}
\newtheorem{theorem}{Theorem}[section]
\newtheorem{proposition}[theorem]{Proposition}
\newtheorem{lemma}[theorem]{Lemma}
\newtheorem{corollary}[theorem]{Corollary}
\newtheorem{definition}[theorem]{Definition}
\newtheorem{example}[theorem]{Example}
\begin{document}
\title{Extrinsic homogeneity of parallel submanifolds}
\author{Tillmann Jentsch}
\date{\today}
\maketitle
%\sloppy
% RZ-Makros.TEX
% Dies sind Makros, die fŸr alle mathematischen Texte gut sind.

\sloppy
\renewcommand{\thefootnote}{\fnsymbol{footnote}}
\renewcommand{\labelenumi}{(\alph{enumi})}
\renewcommand{\labelenumii}{(\roman{enumii})}
\renewcommand{\labelenumiii}{(\arabic{enumiii})}

%  Allgemeine Makros

\newcommand{\R}{\mathrm{I\!R}}
\newcommand{\N}{\mathrm{I\!N}}
\newcommand{\HH}{\mathrm{I\!H}}
\newcommand{\F}{\mathrm{I\!F}}
\newcommand{\E}{\mathrm{I\!E}}
\newcommand{\K}{\mathrm{I\!K}}
\newcommand{\PP}{\mathrm{I\!P}}
\newcommand{\Z}{\mathbb{Z}}
\newcommand{\Q}{\mathbb{Q}}
\newcommand{\C}{\mathbb{C}}
\newcommand{\OO}{\mathbb{O}}

\newcommand{\scrA}{\mathcal{A}}
\newcommand{\scrB}{\mathcal{B}}
\newcommand{\scrC}{\mathcal{C}}
\newcommand{\scrD}{\mathcal{D}}
\newcommand{\scrE}{\mathcal{E}}
\newcommand{\scrF}{\mathcal{F}}
\newcommand{\scrG}{\mathcal{G}}
\newcommand{\scrH}{\mathcal{H}} % alt \Hscr
\newcommand{\scrI}{\mathcal{I}}
\newcommand{\scrJ}{\mathcal{J}}
\newcommand{\scrK}{\mathcal{K}}
\newcommand{\scrL}{\mathcal{L}}
\newcommand{\scrM}{\mathcal{M}}
\newcommand{\scrN}{\mathcal{N}}
\newcommand{\scrO}{\mathcal{O}}
\newcommand{\scrP}{\mathcal{P}}
\newcommand{\scrQ}{\mathcal{Q}}
\newcommand{\scrR}{\mathcal{R}}
\newcommand{\scrS}{\mathcal{S}}
\newcommand{\scrT}{\mathcal{T}}
\newcommand{\scrU}{\mathcal{U}}
\newcommand{\scrV}{\mathcal{V}} % alt \Vscr
\newcommand{\scrW}{\mathcal{W}}
\newcommand{\scrX}{\mathcal{X}}
\newcommand{\scrY}{\mathcal{Y}}
\newcommand{\scrZ}{\mathcal{Z}}

\newcommand{\frakA}{\mathfrak{A}}
\newcommand{\frakB}{\mathfrak{B}}
\newcommand{\frakC}{\mathfrak{C}}
\newcommand{\frakD}{\mathfrak{D}}
\newcommand{\frakE}{\mathfrak{E}}
\newcommand{\frakF}{\mathfrak{F}}
\newcommand{\frakG}{\mathfrak{G}}
\newcommand{\frakH}{\mathfrak{H}}
\newcommand{\frakI}{\mathfrak{I}}
\newcommand{\frakJ}{\mathfrak{J}}
\newcommand{\frakK}{\mathfrak{K}}
\newcommand{\frakL}{\mathfrak{L}}
\newcommand{\frakM}{\mathfrak{M}}
\newcommand{\frakN}{\mathfrak{N}}
\newcommand{\frakO}{\mathfrak{O}}
\newcommand{\frakP}{\mathfrak{P}}
\newcommand{\frakQ}{\mathfrak{Q}}
\newcommand{\frakR}{\mathfrak{R}}
\newcommand{\frakS}{\mathfrak{S}}
\newcommand{\frakT}{\mathfrak{T}}
\newcommand{\frakU}{\mathfrak{U}}
\newcommand{\frakV}{\mathfrak{V}}
\newcommand{\frakW}{\mathfrak{W}}
\newcommand{\frakX}{\mathfrak{X}}
\newcommand{\frakY}{\mathfrak{Y}}
\newcommand{\frakZ}{\mathfrak{Z}}

\newcommand{\fraka}{\mathfrak{a}}
\newcommand{\frakb}{\mathfrak{b}}
\newcommand{\frakc}{\mathfrak{c}}
\newcommand{\frakd}{\mathfrak{d}}
\newcommand{\frake}{\mathfrak{e}}
\newcommand{\frakf}{\mathfrak{f}}
\newcommand{\frakg}{\mathfrak{g}}
\newcommand{\frakh}{\mathfrak{h}}
\newcommand{\fraki}{\mathfrak{i}}
\newcommand{\frakj}{\mathfrak{j}}
\newcommand{\frakk}{\mathfrak{k}}
\newcommand{\frakl}{\mathfrak{l}}
\newcommand{\frakm}{\mathfrak{m}}
\newcommand{\frakn}{\mathfrak{n}}
\newcommand{\frako}{\mathfrak{o}}
\newcommand{\frakp}{\mathfrak{p}}
\newcommand{\frakq}{\mathfrak{q}}
\newcommand{\frakr}{\mathfrak{r}}
\newcommand{\fraks}{\mathfrak{s}}
\newcommand{\frakt}{\mathfrak{t}}
\newcommand{\fraku}{\mathfrak{u}}
\newcommand{\frakv}{\mathfrak{v}}
\newcommand{\frakw}{\mathfrak{w}}
\newcommand{\frakx}{\mathfrak{x}}
\newcommand{\fraky}{\mathfrak{y}}
\newcommand{\frakz}{\mathfrak{z}}

\newcommand{\rmA}{\mathrm{A}}
\newcommand{\rmB}{\mathrm{B}}
\newcommand{\rmC}{\mathrm{C}}
\newcommand{\rmD}{\mathrm{D}}
\newcommand{\rmE}{\mathrm{E}}
\newcommand{\rmF}{\mathrm{F}}
\newcommand{\rmG}{\mathrm{G}}
\newcommand{\rmH}{\mathrm{H}}
\newcommand{\rmI}{\mathrm{I}}
\newcommand{\rmJ}{\mathrm{J}}
\newcommand{\rmK}{\mathrm{K}}
\newcommand{\rmL}{\mathrm{L}}
\newcommand{\rmM}{\mathrm{M}}
\newcommand{\rmN}{\mathrm{N}}
\newcommand{\rmO}{\mathrm{O}}
\newcommand{\rmP}{\mathrm{P}}
\newcommand{\rmQ}{\mathrm{Q}}
\newcommand{\rmR}{\mathrm{R}}
\newcommand{\rmS}{\mathrm{S}}
\newcommand{\rmT}{\mathrm{T}}
\newcommand{\rmU}{\mathrm{U}}
\newcommand{\rmV}{\mathrm{V}}
\newcommand{\rmW}{\mathrm{W}}
\newcommand{\rmX}{\mathrm{X}}
\newcommand{\rmY}{\mathrm{Y}}
\newcommand{\rmZ}{\mathrm{Z}}

\newcommand{\bbA}{\mathbb{A}}
\newcommand{\bbB}{\mathbb{B}}
\newcommand{\bbC}{\mathbb{C}}
\newcommand{\bbD}{\mathbb{D}}
\newcommand{\bbE}{\mathbb{E}}
\newcommand{\bbF}{\mathbb{F}}
\newcommand{\bbG}{\mathbb{G}}
\newcommand{\bbH}{\mathbb{H}}
\newcommand{\bbI}{\mathbb{I}}
\newcommand{\bbJ}{\mathbb{J}}
\newcommand{\bbK}{\mathbb{K}}
\newcommand{\bbL}{\mathbb{L}}
\newcommand{\bbM}{\mathbb{M}}
\newcommand{\bbN}{\mathbb{N}}
\newcommand{\bbO}{\mathbb{O}}
\newcommand{\bbP}{\mathbb{P}}
\newcommand{\bbQ}{\mathbb{Q}}
\newcommand{\bbR}{\mathbb{R}}
\newcommand{\bbS}{\mathbb{S}}
\newcommand{\bbT}{\mathbb{T}}
\newcommand{\bbU}{\mathbb{U}}
\newcommand{\bbV}{\mathbb{V}}
\newcommand{\bbW}{\mathbb{W}}
\newcommand{\bbX}{\mathbb{X}}
\newcommand{\bbY}{\mathbb{Y}}
\newcommand{\bbZ}{\mathbb{Z}}

%% Es werden Fu§not-Symbolgleichungsnummerierung und Normal-Gleichungsnummerierung
%% eingefŸhrt. Die ZŠhler sind seqcounter und neqcouter. In jedem neuen Abschnitt
%% beginnt die ZŠhlung von vorne
\newcounter{neqcounter}[section]
\newcounter{seqcounter}[section]
\newcommand{\numberequation}[1]
   {\renewcommand{\theequation}{\arabic{equation}}
    \setcounter{equation}{\value{neqcounter}}
    \begin{equation}#1\end{equation}\stepcounter{neqcounter}}
\newcommand{\symbolequation}[1]
   {\renewcommand{\theequation}{\fnsymbol{equation}}
    \setcounter{equation}{\value{seqcounter}}
    \begin{equation}#1\end{equation}\stepcounter{seqcounter}}

\newcommand{\diff}{\mathrm{d}}
\newcommand{\Diff}{\mathrm{D}}
\newcommand{\id}{\mathrm{id}}
\newcommand{\GL}{\mathrm{GL}}
\newcommand{\SL}{\mathrm{SL}}
\newcommand{\SO}{\mathrm{SO}}
\newcommand{\End}{\mathrm{End}}
\newcommand{\Alt}{\mathrm{Alt}}
\newcommand{\Spur}{\mathrm{Spur}}
\newcommand{\rk}{\mathrm{rk}}
\newcommand{\Kern}{\mathrm{Kern}}
\newcommand{\Spann}{\mathrm{Spann}}
\newcommand{\Top}{\mathrm{Top}}
\newcommand{\del}{\partial}
\newcommand{\ddel}[2]{\frac{\partial#1}{\partial#2}}
\newcommand{\ddeldel}[3]{\frac{\partial^2#1}{\partial#2\:\partial#3}}
\newcommand{\ddelsquare}[2]{\frac{\partial^2#1}{\partial{#2}^2}}
\newcommand{\grad}{\mathrm{grad}}
\newcommand{\hess}{\mathrm{hess}}
\newcommand{\Hess}{\mathrm{Hess}}
\newcommand{\Rp}{\R_+}
\newcommand{\eps}{\varepsilon}
\newcommand{\vi}{\varphi}
\newcommand{\vkap}{\varkappa} %alt
\newcommand{\kap}{\varkappa}  %neu
\newcommand{\thet}{\vartheta}
\newcommand{\ro}{\varrho}
\newcommand{\Hol}{\mathrm{Hol}}
\newcommand{\rg}{\mathrm{rg}}

\renewcommand{\O}{\varnothing}

\newcommand{\UL}{\mathchoice
{\setbox0=\hbox{$\displaystyle U$}
 \setbox1=\hbox{$\displaystyle l$}
 \hbox{\box0\kern-0.80\wd1\lower0.029\ht1\box1}}
{\setbox0=\hbox{$\displaystyle U$}
 \setbox1=\hbox{$\displaystyle l$}
 \hbox{\box0\kern-0.80\wd1\lower0.029\ht1\box1}}
{\setbox0=\hbox{$\scriptstyle U$}
 \setbox1=\hbox{$\scriptstyle l$}
 \hbox{\box0\kern-0.80\wd1\lower0.029\ht1\box1}}
{\setbox0=\hbox{$\scriptscriptstyle U$}
 \setbox1=\hbox{$\scriptscriptstyle l$}
 \hbox{\box0\kern-0.80\wd1\lower0.029\ht1\box1}}
}

\newcommand{\Uo}{\UL^o}
\def\fam#1#2#3#4{({#1}_{#2})_{#2=#3,\dotsc,#4}}
\def\zz#1#2#3{#1=#2,\dotsc,#3}
\newcommand{\intint}[2]{\{#1,\dotsc,#2\}}
\newcommand{\Vektor}[2]{({#1}_1,\dotsc,{#1}_{#2})} 

\newcommand{\qmq}[1]{\quad\mbox{#1}\quad}
\newcommand{\Menge}[2]{\{\,#1\,|\,#2\,\}}
\newcommand{\Abstand}[1]{\mbox{}\par\vspace{-#1mm}}
\newcommand{\ninfty}{{-}\infty}

\def\bild#1#2#3#4{\leavevmode\vbox to #1{\vfil \hbox to #2{\special{picture #4
   scaled #3}\hfil}}}
   %Parametr: #1=Hšhe,#2=Breite,#3=Skalierung 0.1<f<10,#4=Bildname

%%  Zu den Rahmen:
%%  grRahmen ist mit rahmen aus dem Alalysisscript identisch.
%%  grRahmen geht fast Ÿber die ganze Textbreite
%%
%%  klRahmen ist aus kleinerRahmen aus den RZ-Makros von 1996 entstanden;
%%  seine Hšhe ist etwas vergrš§ert worden,
%%  seine Breite ist dem Text angepasst, dieser darf nur eine Zeile sein
%%
%%  varRahmen ist eine Neuschšpfung: Man stellt eine Breite ein. Dann wird der
%%  in dem Rahmen wie in einer Minipage behandelt.
%%
\newcommand{\normRahmen}[1]
           {\setlength{\fboxsep}{8pt}\fbox{#1}}

\newcommand{\grRahmen}[1]{\begin{center}\setlength{\fboxrule}{0.8pt}
           \setlength{\fboxsep}{8pt}
              \fbox{\begin{minipage}{140mm}\rule{0mm}{5mm}\hspace*{-2mm} #1
                    \end{minipage}}\end{center}}

\newcommand{\klRahmen}[1]{\begin{center}\large\setlength{\fboxrule}{0.8pt}
                     \setlength{\fboxsep}{8pt}
                     \fbox{\rule[-2mm]{0mm}{7mm} #1 }
                     \normalsize\end{center}}

\newcommand{\varRahmen}[2]{\begin{center}\setlength{\fboxrule}{0.8pt}
           \setlength{\fboxsep}{8pt}
              \fbox{\begin{minipage}{#1}\rule{0mm}{5mm}\hspace*{-2mm} #2
                    \end{minipage}}\end{center}}

% die 1996-Version
%\newcommand{\kleinerRahmen}[1]{\begin{center}\large\setlength{\fboxrule}{0.8pt}
%                     \setlength{\fboxsep}{8pt}\fbox{#1}\normalsize\end{center}}

%% Randnotizen vom Juli 1997
\setlength{\marginparsep}{5pt}
\setlength{\marginparwidth}{30pt}
\newcommand{\marginlabel}[1]           
                 {\mbox{}\marginpar{\raggedleft\hspace{0pt}#1}}
\newcommand{\Ausrufezeichen}{\marginlabel{\raisebox{-1.2ex}{\Huge$\boldsymbol{!}$}}}
\newcommand{\Randbalken}[2]{\marginlabel{{\rule[#1]{0.5mm}{#2}}}}
\newcommand{\Zeigefinger}{\marginlabel{\raisebox{-0.8ex}{\LARGE\ding{43}}}}
\newcommand{\Blume}{\marginlabel{\raisebox{-0.6ex}{\LARGE\ding{94}}}}

%  Differentialgeometrie - Makros
\newcommand{\bigoperp}{\mathop{\bigcirc\raisebox{0.25em}
 {\hskip-0.53em\hbox{\vrule height1.0ex width0.04em}
  \hskip-0.77em\hbox{\vrule height0.04em width 0.8em}
  \hskip- 0.2em}}}

\renewcommand{\bigoplus}{\mathop{\bigcirc
  \raisebox{-0.22em}{\hskip-0.53em\hbox{\vrule height2.08ex width0.04em}
  \raisebox{ 0.48em}{\hskip-0.75em\hbox{\vrule height0.04em width 0.8em}}
  \hskip- 0.2em}}}

\def\NP#1#2#3{\mathchoice
  {{\textstyle{\prod\limits_{i=#2}^{#3}}{#1}_i}} 
  {{\textstyle{\prod_{i=#2}^{#3}}{#1}_i}}
  {}
  {}
   }
\def\DS#1#2#3{\bigoplus_{i=#2}^{#3}\!#1_i} 
\def\OS#1#2#3{\bigoperp_{i=#2}^{#3}#1_i} 
\def\WP#1#2#3{#1_0 \times_{#2}\NP {#1}{1}{#3}}
\def\TP#1#2#3#4{{\rule{0mm}{2ex}}^{#2}\NP{#1}{#3}{#4}}

\newcommand {\cinf}{\ensuremath{\mathrm{C}^{\infty}}}
\newcommand {\X}{\mathfrak{X}}
\newcommand{\Tensor}[3]{\mathfrak{T}^{(#2,#3)}(#1)}
\newcommand{\alphad}{\dot{\alpha}}
\newcommand {\g}[2]{\langle #1,#2\rangle}
\def\gg{\g{\cdot\,}{\cdot}}
\newcommand {\euc}[2]{\langle\!\langle #1,#2 \rangle\!\rangle}
\newcommand {\peuc}[2]{\langle\!\langle #1,#2 \rangle\!\rangle\!_s}
\newcommand {\Kov}[2]{\nabla_{#1}#2}
\newcommand {\Kovh}[2]{\widehat{\nabla}_{#1}#2}
\newcommand {\Kovt}[2]{\widetilde{\nabla}_{#1}#2}
\newcommand {\Kovperp}[2]{\nabla^{\perp}_{#1}#2}
\newcommand {\Kovv}[3]{{\nabla^{\scriptscriptstyle{#1}}}_{\!#2}#3}
\newcommand{\Kodel}[1]{\Kov{\del}{#1}}
\newcommand{\Kokodel}[1]{\Kov{\del}{\Kov{\del}{#1}}}

\newcommand {\operp}{\mathbin{\mbox{$\ominus\raisebox{2.9pt}
 {\hskip-0.42em\hbox{\vrule height0.7ex width0.02em}\hskip0.42em }$}}}
\newcommand {\TpM}{T_{p}M}
\newcommand {\Tpf}{T_{p}f}
\newcommand {\Nf}{\perp\!\!(f)}
\newcommand {\NM}{\perp\!\!M}
\newcommand {\Npf}{\perp_p\!\!(f)}
\newcommand {\NpM}{\perp_p\!\!M}
\newcommand {\Nepf}{\perp^{\!\!1}_p\!\!(f)}
\newcommand {\NepM}{\perp^{\!\!1}_p\!\!M}
\newcommand{\Shop}[3]{{\mathrm{A}^{^{\scriptscriptstyle{\!\!#1}}}_{#2}}#3} %shape op

\def\tscrV{\widetilde{\mathcal{V}}}
\def\tscrH{\widetilde{\mathcal{H}}}
\newcommand{\hdisp}[3]{\overset{#2}{\underset{#1}{\parallel}}\!\!#3\,} 
    % horicontal displacement

% Tangentialvektoren
\newcommand{\dotdiff}[2]{\dot{\frac{\diff\ }{\diff #1}}\Big|_{#1=0}\big(#2\big)}
    % NŸbel-Bezeichnung fŸr Tangentialvektoren an eine Kurve t -> a(t)
\newcommand{\dotdif}[2]{\dot{\frac{\diff\ }{\diff #1}}\Big|_{#1=0}#2}
    % NŸbel-Bezeichnung fŸr Tangentialvektoren an eine Kurve a(t)

% Entsprechendes fŸr normale Ableitungen
\newcommand{\normaldiff}[3]{\frac{\diff\ }{\diff #1}\Big|_{#1=#2}\big(#3\big)}
    % NŸbel-Bezeichnung fŸr Ableitungen einer Funktion t -> f(t)
\newcommand{\normaldif}[3]{\frac{\diff\ }{\diff #1}\Big|_{#1=#2}#3}
    % NŸbel-Bezeichnung fŸr Ableitungen von f(t)

% Reelle projektive RŠume
\newcommand {\RPn}{\ensuremath{\R\mathrm{P}^{n}}}
\newcommand {\RPm}{\ensuremath{\R\mathrm{P}^{m}}}

% Komplexe Raumformen
\newcommand {\CPn}{\ensuremath{\C\mathrm{P}^{n}}}
\newcommand {\CPm}{\ensuremath{\C\mathrm{P}^{m}}}
\newcommand {\CHn}{\ensuremath{\C\mathrm{H}^{n}}}
\newcommand {\CHm}{\ensuremath{\C\mathrm{H}^{m}}}

\hyphenation{Riemann-sche Riemann-ian}

%DiffGeo-Makros 2002
% Hier werden Markos zusammengestellt, die fŸr das Diffgeo-Skript nŸtzlich sind

\newcommand{\sst}{\scriptscriptstyle}
\newcommand{\VV}{\mathbb{V}}

% Makros fŸr DiffGeo-AnhE

\newcommand{\PV}{\rmP(\bbV)}
\newcommand{\SV}{\rmS(\bbV)}
\newcommand{\EPV}{\E\times\PV}
\newcommand{\ESV}{\E\times\SV}

% SpezialitŠten

\newcommand{\GrTM}{\rmG_r(TM)}  % Grassmann-BŸndel Ÿber M
\newcommand{\Kovp}[1]{\Kov{}{#1}\big|_{p_0}} 
    % interessant fŸr affine Vektorfelder, Skript Ÿber homogene RŠume, Karcher-Theorie

\newcommand{\GrnTM}{\rmG_r^\nabla(TM)}
\newcommand{\MtM}{{M^\times}}

%Till-Makros 2004
%

\newcommand{\ghdisp}[4]{(\hdisp{#1}{#2}{#3})^{\sst{#4}}}
\newcommand{\hh}[1]{\hat{\!\hat{#1}}} %% <--
\renewcommand{\R}{\bbR}
\renewcommand{\Kodel}[2]{\nabla^
  {\text{\raisebox{0.5ex}{${\scriptstyle{#1}}$}}}_{\del}{#2}}
\renewcommand{\Kokodel}[2]{\Kodel{#1}{\Kodel{#1}{#2}}}
\renewcommand{\E}{\bbE}
\renewcommand{\Kovh}[2]{\hat\nabla_{#1}{#2}}
\newcommand{\WBKov}[2]{\overline{\nabla}_{#1}{#2}}
\newcommand{\Till}{\ensuremath {\scrT}}
\newcommand{\Tillb}{\ensuremath {\Till_b}}
\newcommand{\Tsukada}{\ensuremath {\scrD}}
\newcommand{\Tsukadaq}{\ensuremath {\Tsukada_q}}
\newcommand{\Gzwei}{\ensuremath {\rmG_m^{2}(N)}}
\newcommand{\Isom}{(\hat\eta,\hat\nu)}
\newcommand{\hv}{\hat{v}}
\newcommand{\hu}{\hat{u}}
\newcommand{\hN}{\hat{N}}
\newcommand{\hnu}{\hat\nu}
\newcommand{\hW}{\hat{W}}
\newcommand{\hX}{\hat{X}}
\newcommand{\hY}{\hat{Y}}
\newcommand{\hx}{\hat{x}}
\newcommand{\hy}{\hat{y}}
\newcommand{\hU}{\hat{U}}
\newcommand{\hV}{\hat{V}}
\newcommand{\hZ}{\hat{Z}}
\newcommand{\hOmega}{{\hat{\Omega}}}

\newcommand{\GamA}{\Gamma_{\!\!A}}
\newcommand{\piP}{\pi^{\sst\bbP}}
\newcommand{\rmFB}{\mathrm{FB}}
\newcommand{\Besym}{\scrB_{esym}(N)}
\newcommand{\Besymo}{\scrB_{esym}^0(N)}
\newcommand{\ttop}{{\boldsymbol{\top\hspace{-0.75em}\top}}}
\newcommand{\bbot}{{\boldsymbol{\!\bot\hspace{-0.75em}\bot}}}

\newcommand{\kernel}{\mathrm{kernel}}

\newcommand{\Gl}{\mathrm{Gl}}
\newcommand{\SU}{\mathrm{SU}}
\newcommand{\Sp}{\mathrm{Sp}}
\newcommand{\so}{\mathfrak{so}}
\newcommand{\su}{\mathfrak{su}}
\newcommand{\gl}{\mathfrak{gl}}
\renewcommand{\sp}{\mathfrak{sp}}
\newcommand{\tr}{\mathfrak{tr}}
\newcommand{\osc}{\scrO}
\newcommand{\Id}{\mathrm{Id}}
\newcommand{\Iso}{\mathrm{I}}
\newcommand{\trace}{\mathrm{trace}}
\newcommand{\diag}{\mathrm{diag}}
\newcommand{\Sym}{\mathrm{Sym}}
\renewcommand{\i}{\mathrm{i}}
\renewcommand{\j}{\mathrm{j}}
\renewcommand{\k}{\mathrm{k}}
\renewcommand{\Spann}[2]{\{#1\big|#2\}_{\scriptstyle\R} }\,
\newcommand{\hol}{\mathfrak{hol}}
\newcommand{\fetth}{\boldsymbol{h}}
\newcommand{\fettb}{\boldsymbol{b}}
\newcommand{\ad}{\mathrm{ad}}
\newcommand{\Ad}{\mathrm{Ad}}
\newcommand{\Hom}{\mathrm{Hom}}
\newcommand{\codim}{\mathrm{codim}}
\newcommand{\rank}{\mathrm{rank}}
\newcommand{\Tr}{\mathrm{Tr}}

\abstract{We consider parallel submanifolds $M$ of a Riemannian
symmetric space $N$ and study the question whether 
$M$ is extrinsically homogeneous in $N$\,, i.e.\ whether there exists a subgroup of the isometry group of
$N$ which acts transitively on $M$\,. First, given a ``2-jet'' $(W,b)$ at
some point $p\in N$ (i.e. $W\subset T_pN$ is a linear space and $b:W\times
W\to W^\bot$ is a symmetric bilinear form)\,, we derive necessary and sufficient
conditions for the existence of a parallel submanifold with extrinsically homogeneous tangent holonomy
bundle which passes through $p$ and whose 2-jet at $p$ is
given by $(W,b)$\,. Second, we focus our attention on complete,
(intrinsically) {\em irreducible} \/parallel submanifolds of $N$\,. 
Provided that $N$ is of compact or non-compact type, we establish the
extrinsic homogeneity of every complete, irreducible parallel submanifold
of $N$ whose dimension is at least 3 and which is not contained in any flat of $N$\,.}
%\tableofcontents
\section{Introduction}
\label{se:introduction}
Given a Riemannian manifold $M$ and an 
isometric immersion $f:M\to N$ into some Riemannian symmetric space
$N$\,, we let $TM$ denote the tangent bundle
of $M$\,, $\bot f$ the normal bundle of $f$\,, $h:TM\times TM\to\bot f$ the second
fundamental form and $S:TM\times\bot f\to TM,(x,\xi)\mapsto S_\xi(x)$ the
shape operator. $\nabla^M$ and $\nabla^N$ denote the
Levi Civita connection of $M$ and $N$\,, respectively, and $\nabla^\bot$
denotes the usual normal connection on $\bot f$ (obtained by projection).
Then there is the splitting $f^*TN=TM\oplus \bot f$ and 
the equations of Gau{\ss} and Weingarten state that for $X,Y\in \Gamma(TM), \xi\in\Gamma(\bot f)$
\begin{align*}
&\Kovv{N}{X}{Tf\,Y}=Tf(\Kovv{M}{X}{Y})+h(X,Y)\;,\\
&\Kovv{N}{X}{\xi}=-Tf(S_\xi(X))+\Kovv{\bot}{X}{\xi}\;.
\end{align*}
Furthermore, $h_p$ is a symmetric bilinear map on $T_pM$ with values in $\bot_pf$ for each $p\in M$\,, i.e. 
$h$ is a section of the vector bundle $\Sym^2(TM,\bot f)$\,. On the latter vector bundle, there is a connection induced by
$\nabla^M$ and $\nabla^\bot$ in the usual way, often called ``Van der Waerden-Bortolotti connection''.

\begin{definition}\label{de:parallel} $f$ is called
 \emph{parallel} if $h$ is a parallel section.
\end{definition}
In a similar fashion, we define parallel submanifolds of $N$ (via the isometric immersion
defined by the inclusion map $\iota^M:M\hookrightarrow N$). It is known that a parallel submanifold of $N$ is 
always intrinsically a locally symmetric space and that a simply connected complete Riemannian manifold $M$ is necessarily a globally symmetric space
in case there exists a parallel isometric immersion $f:M\to N$ (since $N$ is a symmetric space).

\begin{example}
1-dimensional parallel isometric immersions $c:\R\to N$ are either
geodesics or {\em (extrinsic) circles}\/ (in the sense of~\cite{NY});
they are given by the Frenet curves of osculating rank one and two, resp., parameterized by arc-length.
\end{example}

Let $\Iso(N)$ denote the Lie
group of isometries of $N$ (see~\cite[Ch.~IV,~\S~2 and~\S~3]{He}).
Given a connected Lie subgroup $G\subset \Iso(N)$ and some $p\in M$\,, let
$H\subset G$ denote the isotropy subgroup at $p$\,. Then the orbit
$M:=G\,p\cong G/H$ is a submanifold of $N$
(cf.~\cite[Ch.~2.9]{Var}) called an {\em (extrinsically) homogeneous submanifold}\,;
however, the topology of $M$ is not necessarily the induced topology, i.e. $M$ is not necessarily  a {\em regular} \/submanifold.\footnote{\label{Fn:quasi}
One can show that $M$ is a {\em quasi-regular}\/ submanifold of $N$ (cf.~\cite[p.~17/18]{Var})\,, which
means the following: If $P$ is any manifold and $u$ is any (not necessarily
continuous) map defined from $P$ to $M$\,, then $u$ is differentiable if and only if
$\iota^M\circ u$ is differentiable. The proof of this result can be found
in~\cite[Vol.~II, Ch.~III, \S~2, Theorem~1]{GHV}}

Since parallelity of $h$ can be seen as the extrinsic analogue of the
characterization of a locally symmetric space, $\nabla R=0$\,, one should
intuitively expect that a complete parallel submanifold of $N$ is extrinsically homogeneous.
In fact, if $N$ is a Euclidian space and $M$ is a complete parallel submanifold of $N$\,,
then it follows from a result of~\cite{Fe1} that $M$ is a {\em symmetric submanifold}\/
(i.e. $M$ is invariant under the reflections at the various normal
spaces, see~\cite{Fe2}); another proof of his observation was given in~\cite{St}\,.
Clearly, every symmetric submanifold of an arbitrary ambient space (in the
sense of~\cite{N2,St}) is a homogeneous parallel submanifold, but there
always exist parallel submanifolds which are not extrinsically symmetric
unless the ambient space has constant curvature. 

Nevertheless, if the ambient space $N$ is a rank-1 symmetric space, then every
complete parallel submanifold of $N$ turns out to be
extrinsically homogeneous (for curves, this follows from~\cite[Theorem~2.1]{MT}; for higher-dimensional submanifolds, this is a
consequence of the classification of parallel submanifolds in rank-1 symmetric spaces, see~\cite[Ch.~9.3]{BCO}).
However, this fact remains no longer true if $N$ is of higher rank: In
fact, according to~\cite{MT}, a homogeneous space $N$ is Euclidian
or a symmetric space of rank 1 if and only if every circle $c:\R¸\to N$ is the orbit of a one-parameter
subgroup of $\Iso(N)$\,.

In this article, we will study extrinsic homogeneity of parallel
submanifolds of $N$\,; thereby the case that $N$ is of higher rank is always
implicitly included. In this case there seems to be ``not
much known about parallel submanifolds of $N$'' (cited from~\cite{BCO})\,;
hence our results might serve as a first step towards a better
understanding of parallel submanifolds in ambient symmetric spaces of higher
rank.

\subsection{The main result}
\label{se:main}
If $M$ is a Riemannian manifold, $p\in M$ is a fixed point and $e:=(e_1,\ldots
e_m)$ is an orthonormal basis of $T_pM$\,, then, by definition, the holonomy bundle of $TM$
through $e$ (denoted by $\Hol(M)$) is obtained by
parallel translation of $e$ along the various curves
$[0,1]\to M$ with $c(0)=p$\,. Clearly, every subgroup $G\subset\Iso(M)$
acts on the principal bundle of orthonormal frames over $M$ in a natural way; but, in general, $G$ does not
leave $\Hol(M)$ invariant. However, if $M$ is a symmetric space,
then the subgroup of $\Iso(M)$ which is generated by the transvections (see
Appendix~\ref{se:C}) acts transitively on $\Hol(M)$\,.

\begin{definition}\label{de:homogeneous_holonomy}
Let $M$ be a (quasiregular) submanifold of the symmetric space $N$\,.
$M$ has extrinsically homogeneous (tangent) holonomy
bundle if there exists a Lie subgroup $G\subset\Iso(N)$ 
which leaves $M$ invariant and some $p\in M$ such that for every curve $c:[0,1]\to M$ with $c(0)=p$ there exists some $g\in G$ with
\begin{itemize}
\item $g(p)=c(1)$\,, 
\item and the parallel displacement along $c$ is given by
\begin{equation}\label{eq:f_is_equivariant}
\ghdisp{0}{1}{c}{M}=T_pg|_{T_pM}:T_pM\to T_{c(1)}M\;.
\end{equation}
\end{itemize}
\end{definition}
Clearly, then $M$ is the orbit $G\,p$\,.

\begin{example}\label{ex:homogeneous_holonomy}
\begin{enumerate}
\item
Every symmetric submanifold of $N$ has extrinsically
homogeneous	 holonomy bundle.
\item For every curve $c:\R\to N$ the following assertions are equivalent:
\begin{itemize}
\item $c$ is the orbit of a one-parameter subgroup of $\Iso(N)$\;.
\item $c(\R)$ is a homogeneous submanifold.
\item $c(\R)$ is a submanifold with extrinsically homogeneous holonomy bundle.
\end{itemize}
\end{enumerate}
\end{example}

In the following, an intrinsically flat, totally geodesic submanifold of the symmetric space $N$ is shortly
called a flat. Recall that $N$ is called of compact or non-compact type if the
Killing form of $\fraki(N)$ restricted to $\frakp$ is strictly negative
or strictly positive, respectively (see~\cite[Ch.~V,~\S~1]{He}), and that a
simply connected symmetric space $M$ is called reducible if it is isometric to the Riemannian product of two symmetric spaces
of dimension at least one, respectively (otherwise $M$ is called irreducible).

\begin{theorem}[Main Result]\label{th:MainResult}
Let $f$ be a parallel isometric immersion from a simply connected irreducible
symmetric space $M$ with $\dim(M)\geq 3$ into a symmetric space $N$
of compact or non-compact type. Then the following assertions are equivalent:
\begin{enumerate}
\item $f(M)$ is not contained in any flat of $N$\,.
\item $f(M)$ is a homogeneous submanifold of $N$\,.
\item $f(M)$ is a submanifold with extrinsically homogeneous holonomy bundle.
\end{enumerate}
If any of these properties holds, then $f(M)$ is actually
a parallel submanifold of $N$ and $f:M\to f(M)$ is a Riemannian covering.
\end{theorem}

Recall that an isometric immersion
$f:M\to N$ is called full if $f(M)$ is not contained in any proper totally
geodesic submanifold of $N$ (cf.~\cite[Ch.~2.5]{BCO}) and
that the rank of $N$ is the dimension of a maximal flat in $N$ (see~\cite[Ch.~V,~\S~6]{He})\,.

\begin{corollary} In the situation of Theorem~\ref{th:MainResult}, suppose that
$\dim(M)$ is greater than the rank of $N$ or that $f$ is full.
Then $f(M)$ is a parallel submanifold with extrinsically homogeneous
holonomy bundle and $f:M\to f(M)$ is a Riemannian covering.
\end{corollary}

It is known that every homogeneous submanifold is complete and that the universal covering space of every complete parallel
submanifold of $M$ is a symmetric space. Therefore:

\begin{corollary}Let $N$ be a symmetric space of compact or non-compact
  type. Then every locally irredcuible homogeneous parallel submanifold of $N$
  of dimension at least two has extrinsically homogeneous holonomy bundle.
\end{corollary}

Furthermore, according
to~\cite[Theorem~7]{JR}, for every (possibly not complete) parallel
submanifold $M_{loc}\subset N$ there exists a simply connected symmetric space
$M$\,, an open subset $U\subset M$ and a parallel isometric immersion $f:M\to N$
such that $f|_U:U\to M_{loc}$ is a covering. Hence:

\begin{corollary}
Let $N$ be a symmetric space of compact or non-compact type and $M_{loc}$
be a (possibly not complete) locally irreducible parallel submanifold of $N$ with $\dim(M_{loc})\geq 3$\,.
Suppose that $M_{loc}$ is not contained in any flat of $N$\,. Then $M_{loc}$
is an open part of a submanifold with extrinsically homogeneous holonomy bundle.
\end{corollary}

\sloppypar
\begin{example}\label{ex:extrinsic_holonomy}
Consider the Riemannian product space $N:=\C\rmP^1\times\C\rmP^1$\,, which is a
Hermitian symmetric space of compact type and rank $2$\,.
Let $c:\R\to N$ be a ``generic'' circle (this condition will be explained more
precisely in the proof of this example). Then $c$ is a full parallel isometric
immersion, but $c(\R)$ is not a homogeneous submanifold of $N$\,.
\end{example}

\paragraph{This article is organized as follows:} 
In Section~\ref{se:2}, we recall some well known results on parallel submanifolds of symmetric spaces.

In Section~\ref{se:3}, we consider ``infinitesimal models'' of parallel submanifolds with extrinsically homogeneous holonomy bundle: 
given a formal 2-jet at some point $p\in N$\,, we ask the question whether this 2-jet is induced by a
parallel submanifold with extrinsically homogeneous holonomy bundle which passes through $p$
(see Definition~\ref{de:model} and Theorem~\ref{th:2-jet}). The proof of
Theorem~\ref{th:2-jet} 
can be found in Sections~\ref{se:3.1} and~\ref{se:3.2}. 
As a first application of Theorem~\ref{th:2-jet}, 
given a symmetric submanifold of some symmetric space $\tilde N$\,, 
there seems to be a possibility how to construct a full parallel submanifold
of the symmetric space $N$ which is not extrinsically symmetric (see
Theorem~\ref{th:new_examples?}). 
Furthermore, given a parallel isometric immersion $f:M\to N$\,, 
we can state necessarily and sufficient conditions on the 2-jet of 
$f$ at $p$ which decide whether $f(M)$ has extrinsically homogeneous holonomy bundle or not (Theorem~\ref{th:homogen}).

In Section~\ref{se:parallel_flat}, we deal with the proof of
Theorem~\ref{th:MainResult}. For a parallel isometric immersion $f$ from a simply connected irreducible
symmetric space $M$ into $N$\,, one can show that the associated totally
geodesic submanifold $\bar M:=\exp(T_pf(T_pM))$ is either a (locally)
irreducible symmetric space, too, or $\bar M$ is a flat (Proposition~\ref{p:hol-relations}). In the latter case,
provided also that $N$ is of compact or non-compact
type, we will show that even $f(M)$ is contained in some flat of $N$ (Theorem~\ref{th:4}); for
the proof of this result, we will use the classification of parallel submanifolds in the Euclidian
space given by D.~Ferus~\cite{Fe1,Fe2}. In the
first case, provided additionally that $\dim(M)\geq 3$\,, we can show that a
distinguished linear map $\fetth$ which encodes both the shape operator and the second fundamental form of
$f$ at $p$ (see~\eqref{eq:fetth1}) takes values in the ``extrinsic'' holonomy Lie algebra of $f$
(Theorem~\ref{th:fetth_in_hol}). With these preconsiderations, the proof of
Theorem~\ref{th:MainResult} is not very difficult anymore (see Section~\ref{se:proof}).

Finally, in Sec.~\ref{se:two-symmetric}, we discuss
``2-symmetry'' of parallel submanifolds (see Theorem~\ref{th:two-symmetric}).

\section{Some well known properties of parallel submanifolds of symmetric spaces}\label{se:2}
Let $N$ be a symmetric space and $f:M\to N$ be a parallel isometric immersion.
In the following, we implicitly identify $T_pM$ with $Tf(T_pM)$ (by means of $T_pf$) for each $p\in
M$\,; for convenience, the reader may assume that $M\subset N$ is a
submanifold and $f=\iota^M$\,. We introduce the {\em first normal space}
$\bot^1_pf:=\Spann{h(x,y)}{x,y\in T_pM}$ and the {\em second osculating space} $\osc_p
f:=T_pM\oplus \bot_p^1f$ for each $p\in M$\,. As $p$ varies over $M$\,, this defines the first normal bundle $\bot^1f$
and the (second) osculating bundle $\osc f=TM\oplus\bot^1 f$
(where $TM$ is seen as a vector subbundle of $f^*TN$ by means of $Tf$)\,.

Let $\sigma^\bot\in \rmO(\osc_p f)$ denote the linear reflection in $\bot^1_pf$ and
$\Ad(\sigma^\bot):\so(\osc_pf)\to\so(\osc_pf),A\mapsto \sigma^\bot\circ A\circ\sigma^\bot$
be the induced involution on $\so(\osc_pf)$\,. Let $\so(\osc_pf)_+$ and $\so(\osc_pf)_-$ denote the $+1$- and $-1$-eigenspaces of
$\Ad(\sigma^\bot)$\,, respectively, i.e.
\begin{align}
\label{eq:even}
&\so(\osc_pf)_+:=\left\{\left.\left(
\begin{array}{cc}
A & 0 \\
0 & B
\end{array}
\right )\right|A\in \so(W),B\in \so(\bot^1_pf\right\}\;, \\
\label{eq:odd}
&\so(\osc_pf)_-:=\left\{\left.\left(
\begin{array}{cc}
0 & -C^* \\
C & 0
\end{array}
\right)\right|C\in \Hom(W,\bot^1_p\right\}\;.
\end{align}

Then the following lemma is straightforward:

\begin{lemma}\label{le:splitting}
\begin{enumerate}
\item
We have $A\in\so(\osc_pf)_+$ if and only if $A(T_pM)\subset T_pM$\,.
\item
The map $\so(\osc_pf)_-\to\Hom(T_pM,\bot_pf),A\mapsto A|_{T_pM}$ is a linear isomorphism.
\end{enumerate}
\end{lemma}

Because of Lemma~\ref{le:splitting}~(b), there exists a unique
linear map $\fetth:T_pM\to\so(\osc_pf)_-$ characterized by
\begin{equation}\label{eq:fetth1}
\forall x,y\in W:\fetth(x)\,y=h(x,y)\;.
\end{equation}
Then $-\fetth:W\times \bot^1b\to W$ plays the role of the shape operator, i.e.
\begin{equation}\label{eq:fetth2}
\forall x,y\in W,\xi\in\bot^1_pf:\g{\fetth(x)\xi}{y}=-\g{h(x,y)}{\xi}\;.
\end{equation}

In the following, we use the natural inclusion
$\so(\osc_pf)\hookrightarrow\so(T_oN)$ ($o:=f(p)$) such that 
each $A\in\so(\osc_pf)$ becomes the zero-map on $\osc_pf^\bot$\,. 
Since $f$ is parallel, now the curvature equations of Gau{\ss}, Codazzi and Ricci
can formally be combined to
\begin{equation}\label{eq:Gauss_Ricci}
R^N(x,y)(z+\xi)=R^M(x,y)\,z+R^\bot(x,y)\,\xi +[\fetth(x),\fetth(y)](z+\xi)
\end{equation}
for all $x,y,z\in T_pM,\xi\in\bot_pM$\,.

\begin{proposition}\label{p:properties}
For every parallel isometric immersion $f:M\to N$ into the symmetric space
$N$\,, the following properties hold:
\begin{enumerate}
\item The tangent space $T_pM$ is a curvature invariant subspace of $T_oN$ ($o:=f(p)$)\,.
\item For arbitrary $p\in M$\,, $x,y\in T_pM$ we have
$\fetth(x)\,\fetth(y)(T_pM)\subset T_pM$ and the following equation holds on $\osc_pf$\,:
\begin{align}
\notag
R^N(h(x,x),h(y,y))=&[\fetth(x),[\fetth(y),R^N(x,y)]]-R^N(\fetth(x)\,\fetth(y)\,x,y)\\
\label{eq:fundamental}
&-R^N(x,\fetth(x)\,\fetth(y)\,y)\;.
\end{align}
\item
The first normal spaces $\bot^1_pf$ are curvature invariant subspaces of
$T_oN$\,, too.
\item The tensor of type $(0,4)$ on $\osc f$ defined by
$
R^\flat(v_1,v_2,v_3,v_4):=\g{R^N(v_1,v_2)\,v_3}{v_4}
$
satisfies
\begin{equation}\label{eq:fetth_3}
\sum_{i=1}^4R^{\flat}(v_1,\ldots,\fetth(x)\,v_i,\ldots,v_4)=0
\end{equation}
for all $v_1,\ldots,v_4\in \osc_pf$ and each $x\in T_pM$\,.
\item The following two equations hold for all $p\in M$ and $x,y,z,z'\in T_pM$\,,
\begin{align}\label{eq:semiparallel_1}
&R^\bot(x,y)\,h(z,z')=h(R^M(x,y)\,z,z')+h(z,R^M(x,y)\,z')\;,\\
\label{eq:semiparallel_2}&\fetth(R^N(x,y)\,z-[\fetth(x),\fetth(y)]\,z)=[R^N(x,y)-[\fetth(x),\fetth(y)],\fetth(z)]\;.
\end{align}
\item $\osc f$	is a parallel vector subbundle of the pullback bundle $f^*TN$
  (where the latter is equipped with the connection induced	 by $\nabla^N$). Hence, we
have $R^N(x,y)(\osc_pf)\subset\osc_pf$ and $R^N(x,y)|_{\osc_pf}$ is the
corresponding curvature endomorphism of $\osc_pf$ for all $x,y\in T_pM$\,.
\end{enumerate}
\end{proposition}
\begin{proof}
Part~(a) follows from the Codazzi Equation. For~(b) cf.~\cite[Proposition~3.12]{J1}. For~(c) cf.~\cite[Corollary~3.13]{J1}. For~(d) see~\cite[Corollary~3.16]{J1}. For~\eqref{eq:semiparallel_1} see~\cite[Definition~1 and Proposition~2]{J1}.
For~\eqref{eq:semiparallel_2}, note that both sides of this equation are
elements of $\so(\osc_pf)_-$\,.
Thus by virtue of Lemma~\ref{le:splitting} it is enough to verify that
\eqref{eq:semiparallel_2} holds on $T_pM$\,. For this let $\tilde y\in T_pM$ be
given; then~\eqref{eq:Gauss_Ricci} combined with the fact that $\fetth$ maps into $\so(\osc_pf)_-$ implies that
$$
[R^N(x,y)-[\fetth(x),\fetth(y)],\fetth(z)]\,\tilde
y=R^\bot(x,y)\,\fetth(z)\,\tilde y-\fetth(z)\,R^M(x,y)\,\tilde y\;;
$$
now use Eqs.~\eqref{eq:fetth1},~\eqref{eq:semiparallel_1} and again~\eqref{eq:Gauss_Ricci}.
Part~(f) is an immediate consequence of the parallelity of $h$\,.
\qed \end{proof}

\subsection{``Reduction of the codimension''}
\label{se:2.1}
We recall the following result on the ``reduction of the codimension'' in the
sense of Erbacher~\cite{Er} (cf.\ also~\cite[Theorem~2.4]{J1}):

\begin{theorem}[Dombrowski]\label{th:dombi}
Let $N$ be a symmetric space.
If $f:M\to N$ is a parallel isometric immersion and if for some point $p\in M$ the
second osculating space $\osc_pf=T_pM\oplus \bot^1_pf$
is contained in some curvature invariant subspace $V$ of $T_oN$ ($o:=f(p)$)\,, then
$f(M)\subset\bar N$\,, where $\bar N$ denotes the totally geodesic submanifold $\exp^N_o(V)$\;.
\end{theorem}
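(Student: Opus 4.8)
The plan is to show that the image under $f$ of every curve in $M$ issuing from $p$ lies in $\bar N$; since $M$ is connected, this yields $f(M)\subset\bar N$. First I would invoke the theorem of E.~Cartan: as $V$ is a curvature invariant subspace of $T_pN$, the set $\bar N:=\exp^N_p(V)$ is a totally geodesic (hence complete) submanifold with $T_p\bar N=V$. The totally geodesic property has the crucial consequence that $T\bar N$ is a $\nabla^N$-parallel subbundle of $TN|\bar N$: along a curve in $\bar N$ the $\nabla^N$-parallel transport restricts on tangent vectors to the $\nabla^{\bar N}$-parallel transport, because $\nabla^N_XY=\nabla^{\bar N}_XY$ for tangent $X,Y$ and the shape operator of $\bar N$ vanishes.

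Next I would fix a piecewise-smooth curve $c:[0,1]\to M$ with $c(0)=p$ and set $\beta:=f\circ c$, with $\beta(0)=f(p)\in\bar N$. Let $P_t:T_pN\to T_{\beta(t)}N$ denote $\nabla^N$-parallel transport along $\beta$. Since $f$ is parallel, Proposition~\ref{p:properties}(d) tells us that $\osc f$ is a $\nabla^N$-parallel subbundle of $f^*TN$, so $P_t(\osc_pf)=\osc_{c(t)}f$. Consequently
\[
\dot\beta(t)=Tf(\dot c(t))\in T_{c(t)}M\subset\osc_{c(t)}f=P_t(\osc_pf)\subset P_t(V),
\]
where the last inclusion uses the hypothesis $\osc_pf\subset V$. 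Thus $\beta$ is a curve starting in $\bar N$ whose velocity lies, at every time, in the $\nabla^N$-parallel transport of $V$ along $\beta$ itself.

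The heart of the argument is then a reduction lemma that I would isolate: \emph{if $\beta$ is a curve in $N$ with $\beta(0)\in\bar N$ and $\dot\beta(t)\in P_t(V)$ for all $t$, then $\beta$ lies entirely in $\bar N$.} I would prove this via the uniqueness of Cartan development. Writing $w(t):=P_t^{-1}(\dot\beta(t))$, the hypothesis gives $w(t)\in V$, so the development $a(t):=\int_0^t w(s)\,\diff s$ of $\beta$ takes values in $V=T_p\bar N$. Performing the anti-development of this same $V$-valued curve $a$ inside the (complete, totally geodesic) submanifold $\bar N$ produces a curve $\gamma$ in $\bar N$ with $\gamma(0)=f(p)$; since parallel transport in $N$ along $\gamma$ agrees with parallel transport in $\bar N$ on tangent vectors, $\gamma$ has development $a$ when regarded as a curve in $N$ as well. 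Two curves in $N$ sharing a starting point and a development coincide, whence $\beta=\gamma\subset\bar N$.

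Applying this lemma to $\beta=f\circ c$ yields $f(c(t))\in\bar N$ for every curve $c$ from $p$, and connectedness of $M$ finishes the proof. The step I expect to be the main obstacle is the reduction lemma: one must make the development/anti-development argument precise, in particular checking that the anti-development of the $V$-valued curve carried out in $\bar N$ reproduces the same $N$-development. This rests squarely on the totally geodesic identity $\nabla^N|\bar N=\nabla^{\bar N}$ and the resulting $\nabla^N$-parallelity of $T\bar N$; the uniqueness of anti-development — an ODE on the orthonormal frame bundle of $N$ — then does the rest.
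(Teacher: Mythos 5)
The paper contains no proof of Theorem~\ref{th:dombi} for you to be measured against: the result is quoted there as well known (attributed to Dombrowski, with the reader referred to Theorem~2 of~\cite{J1} and Lemma~2.2 of~\cite{Ts}; cf.\ also~\cite{Er} and~\cite{D}), so your proposal must be judged on its own merits --- and it stands. Your reduction is the right one: the parallelity of $f$ enters only through Proposition~\ref{p:properties}\,(d), which gives $P_t(\osc_pf)=\osc_{c(t)}f$ and hence $\dot\beta(t)\in P_t(V)$ for $\beta=f\circ c$; the remaining content is your reduction lemma, and your development argument for it is sound. Its two essential ingredients are both correctly identified: total geodesy of $\bar N$ makes $\nabla^N$-parallel transport along curves of $\bar N$ preserve $T\bar N$ and restrict there to $\nabla^{\bar N}$-parallel transport, so the anti-development in $\bar N$ of the $V$-valued curve is simultaneously an anti-development in $N$; and a curve in $N$ is determined by its initial point and its development (uniqueness of solutions of the horizontal-lift ODE on the orthonormal frame bundle, taking the same initial frame for $\beta$ and $\gamma$). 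It is worth noting that your lemma requires no symmetry of $N$ whatsoever; the symmetric-space hypothesis is consumed entirely by Cartan's theorem, which produces $\bar N$ from the curvature invariant subspace $V$. On exactly that point your wording slips slightly: the completeness of $\bar N$ --- which you genuinely need for the \emph{global} existence of the anti-development $\gamma$ on the whole parameter interval --- is not a formal consequence of being totally geodesic (an open piece of a flat is totally geodesic yet incomplete); rather, it is part of Cartan's theorem for $\exp^N_p(V)$ with $V$ curvature invariant in the complete symmetric space $N$. With that phrase repaired, the proof is complete and is, in substance, the standard argument behind this kind of reduction theorem.
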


Let $\fraki(N)$ be the Lie algebra of $\Iso(N)$ and $\fraki(N)=\frakk\oplus\frakp$ be the Cartan decomposition with respect to the base point $p$\,.
For each $X\in\fraki(N)$ we have the one-parameter subgroup
$\psi_t^X:=\exp(t\,X)$ of isometries on $N$ and the induced Killing vector
field $X^*$ on $N$ (see~\eqref{eq:fundamental_vector_field}). Furthermore,
let $\pi_1:\fraki(N)\to \so(T_pN), X\mapsto\nabla^NX^*(p)$ and $\pi_2:\fraki(N)\to T_pN, X\mapsto X^*(p)$
be the canonical projections, see Appendix~\ref{se:B}.

\begin{lemma}\label{le:unique}
Let $N$ be a symmetric space.
\begin{enumerate}
\item Suppose that $V$ is a linear subspace of $T_pN$ which is not contained in any
proper curvature invariant subspace of $T_pN$\,. If $T_pg|_V=\Id$ holds for some $g\in K$\,, then
$g=\Id$\,. Therefore, if $\pi_1(X)|_V=0$ and $\pi_2(X)=0$\,, then $X=0$\,.
\item
Let $G\subset\Iso(N)$ be a connected Lie subgroup, $\frakg$ denote the
Lie algebra of $G$ and let $M$ denote the orbit $=G\,p$\,.
Then we have:
\begin{align}\label{eq:Gauss_2}
&\forall g\in G:g(T_pM)=T_{g(p)}M\;.\\
&\forall x,y\in T_pM:T_pg\,h(x,y)=h(T_pg\,x,T_p\,y)\;.
\label{eq:Gauss_3}
\end{align}
\item
In the situation of Part~(b), suppose additionally that $M$ is a full parallel submanifold.\footnote{The result remains true even if $M$ is not parallel. But then one has to use a more general version of
Theorem~\ref{th:dombi}.} Then $G$ acts effectively on $M$\,.
In particular, if $X^*|_M=0$ holds for some $X\in\frakg$\,, then $X=0$\,.
\end{enumerate}
\end{lemma}
\begin{proof}
For~(a): If $T_pg|_V=\Id$ holds for some $g\in K$\,, then $V:=\Menge{v\in
T_pN}{T_pg\,v=v}$ is a curvature invariant subspace with $\osc_pM\subset V$\,.
Thus $T_pg=\Id$ besides $g(p)=p$\,, and therefore we have $g=\Id$ since the
isotropy representation of $K$ is faithful. If $X\in\frakk$ satisfies
$\pi_1(X)|_V=0$ and $\pi_2(X)=0$\,, then $X\in\frakk$
(see~\eqref{eq:frakk_1}) and $\rho_*(X)|_V=0$ (see Proposition~\ref{p:Kovv_X}), hence $T_p\psi_t^X$ is the identity
on $V$ for each $t\in\R$\,. Thus $\psi_t^X=\Id$ by the previous, i.e. $X=0$\,.

For~(b): Since $M$ is a $G$-orbit,~\eqref{eq:Gauss_2} is straightforward and~\eqref{eq:Gauss_3} is a consequence of the Gau{\ss} equation.

For~(c): In this situation, the second osculating space	 $V:=\osc_pM$
satisfies the hypothesis for Part~(a) (because of
Theorem~\ref{th:dombi}). Now assume that $g\in G$ satisfies $g|_M=\Id$\,; then $T_pg|_{T_pM}=\Id$ by means
of~\eqref{eq:Gauss_2}\,. Thus, using also~\eqref{eq:Gauss_3}, even $T_pg|_{\osc_pM}=\Id$ and therefore
$g=\Id$ as a consequence of Part~(a). Thus $G$ acts effectively on $M$\,.
Furthermore, if $X\in\frakg$ satisfies $X^*|_M=0$\,, then $\psi_t^X$ is the identity on
$M$ for each $t\in\R$\,. Hence the previous implies that
$\psi_t^X=\Id$ for each $t\in\R$\,, i.e. $X=0$\,.
\qed \end{proof}

\section{Infinitesimal models}\label{se:3}
A well known result from~\cite{St} states that a complete parallel
submanifold $M\subset N$ is uniquely determined by its 2-jet $(T_pM,h_p)$
at one point $p\in M$\,. Conversely, let a point $p\in N$\,,
a linear subspace $W\subset T_pN$ and a symmetric bilinear
map $b:W\times W\to W^\bot$ be given (in the following called a (formal) {\em 2-jet}\/ at $p$).
The following question is somehow more delicate: Does there
{\em exist}\/ a parallel submanifold $M\subset N$ with extrinsically
homogeneous	 holonomy bundle which passes through $p$ and whose 2-jet at $p$ is
given by $(W,b)$\,?\footnote{In~\cite{JR}, the analogous problem was solved for
 arbitrary parallel submanifolds.} If the ambient space is a symmetric space,
then the answer will be given by Theorem~\ref{th:2-jet} below.

Let $\Iso(N)^0$ denote the connected component of $\Iso(N)$\,.
The {\em isotropy subgroup}\/ of $\Iso^0(N)$ and the {\em isotropy
representation}\/ at $p$ are given by
\begin{align}\label{eq:K}
&K:=\Menge{g\in\Iso^0(N)}{g(p)=p}\;,\\
&\rho:K\to\SO(T_pN),\; g\mapsto T_pg\;.
\label{eq:rho}
\end{align}
Then $\rho$ is a faithful representation
(because an isometry is determined by its value and differential at one point).
Let $\frakk$ be the Lie algebra of $K$ and $\rho_*:\frakk\to\so(T_pN)$
be the induced representation called the {\em linearized isotropy representation}, i.e.
\begin{equation}\label{eq:rho_star}
\forall X\in\frakk,\forall\,u\in T_pN:\rho_*(X)=\frac{\diff}{\diff t}\Big|_{t=0}T_p\exp(t\,X)\,u\;.
\end{equation}

Given a 2-jet $(W,b)$ at $p$\,, we introduce its ``first normal space''
$\bot^1b:=\Spann{b(x,y)}{x,y\in W}$ and the associated ``second osculating
space'' $\osc b:=W\oplus \bot^1b$\,. Then the splitting $\so(\osc b)=\so(\osc
b)_+\oplus\so(\osc b)_-$ and the linear map $\fettb:W\to\so(\osc b)_-$
are defined similar as in Section~\ref{se:2}.

\begin{definition}\label{de:model}
Let a symmetric space $N$\,, some $p\in M$ and a 2-jet $(W,b)$ at $p$ be given. Motivated by~\cite{Co},
we will call $(W,b)$ an {\em infinitesimal model}\/ if the following properties hold:
\begin{itemize}
\item $W$ is a curvature invariant subspace of $T_pN$\,, i.e. $R^N(W\times
  W)\,W\subset W$\,.
\item
Equation~\ref{eq:semiparallel_2} holds, i.e.
\begin{equation}
\fettb\big(R^N(x,y)\,z-[\fettb(x),
\fettb(y)]\,z\big)\,v=[R^N(x,y)-[\fettb(x),\fettb(y)],\fettb(z)]\,v\
\label{eq:fettb_is_semiparallel}
\end{equation}
for all $x,y,z\in W$\,, $v\in T_pN$\,.
\item For each $x\in W$ there exists some $X\in\frakk$ such that $A:=\rho_*(X)$ satisfies
\begin{equation}\label{eq:Gamma}
A(\osc b)\subset \osc b\qmq{and}A|_{\osc b}=\fettb(x)\;.
\end{equation}
\end{itemize}
\end{definition}
Note that $\rho_*(\frakk)=\so(T_pN)$ if and only if $N$ is a space form.

\begin{theorem}\label{th:2-jet}
In the situation of Definition~\ref{de:model}, the 2-jet $(W,b)$ is an
infinitesimal model if and only if there exists a parallel submanifold with
extrinsically homogeneous holonomy bundle which passes through $p$ and whose 2-jet at
$p$ is given by $(W,b)$\,.
\end{theorem}

\subsection{Proof for the ``only if direction'' of Theorem~\ref{th:2-jet}}
\label{se:3.1}
Let $M\subset N$ be a parallel submanifold with extrinsically homogeneous
holonomy bundle which passes through $p$ such that $(W,b)=(T_pM,h_p)$\,.
Then $W$ is a curvature invariant subspace of $T_pN$\,, according to
Proposition~\ref{p:properties}, and~\eqref{eq:fettb_is_semiparallel} holds by means
of~\eqref{eq:semiparallel_2}. It remains to establish the existence of a
solution to~\eqref{eq:Gamma}. To this end, let $G\subset\Iso(N)$ be a connected Lie subgroup
which has the properties described in
Definition~\ref{de:homogeneous_holonomy} and let $\frakg$ denote its Lie algebra.
Again, let $\pi_1:\fraki(N)\to \so(T_pN)$ and $\pi_2:\fraki(N)\to T_pN$
denote the canonical projections (see Section~\ref{se:2.1}).

We set
\begin{equation}\label{eq:frakm_0}
\frakm_0:=\Menge{X\in\frakg}{x:=\pi_2(X)\in T_pM\ \text{and}\ A:=\pi_1(X)\
\text{solves}~\eqref{eq:Gamma}}\;.
\end{equation}
Let $x\in T_pM$	 be given. We claim that there exists some $X\in\frakm_0$ with $\pi_2(X)=x$\,:

First, we assume additionally that $M$ is full in $N$\,. 
Then Lemma~\ref{le:unique}~(c) in combination with
Definition~\ref{de:homogeneous_holonomy} shows
that we can apply Proposition~\ref{p:KN}; thus there exists a reductive
complement $\frakm\subset\frakg$ such that $\nabla^M$ is the
corresponding canonical connection on $TM$\,.
In particular, there exists some $X\in\frakm$ with $x=\pi_2(X)$\,. In order
to show that $X\in\frakm_0$ holds\,, 
let $\gamma$ be the geodesic of $M$ with $\dot\gamma(0)=x$\,.
Following Example~\ref{ex:geodesics}, we have
\begin{equation}\label{eq:pardisp_3}
\gamma(t)=\exp(t\,X)(p)\qmq{and}\forall y\in T_pM:\ghdisp{0}{t}{\gamma}{M}\,y=T_p\exp(t\,X)\,y\;.
\end{equation}
From~\eqref{eq:pardisp_3} and~\eqref{eq:Kovv_X} it follows that
\begin{equation}
\forall y\in T_pM:\nabla^M_y(X|_M)=0\;.
\end{equation}
Thus, on the one hand, the Gau{\ss} equation yields that
\begin{equation}\label{eq:piN_2(X)}
\pi_1(X)|_{T_pM}=h(x,\cdot)\in\Hom(T_pM,\bot^1_pM)\;.
\end{equation}
On the other hand, if $x_1,\ldots,x_k$ is a basis of $T_pM$\,, then, according to~\eqref{eq:pardisp_3},
the sections $T_p\exp(t\,X)\,x_i$ define a parallel frame of $TM$ along $\gamma$\,; hence by
$\xi_{i,j}(t):=h(x_i(t),x_j(t))$ are also defined parallel sections of
$\bot M$ along $\gamma$ (since $M$ is a parallel submanifold).
Using Lemma~\ref{le:unique}~(b) (with $g:=\psi_t:=\exp(t\,X)$)\,,
we obtain that
$$
T_p\psi_t\,h(x_i,x_j)=h(T_p\psi_t\,x_i,T_p\psi_t\,x_j)
=\xi_{i,j}(t)\;,
$$
thus \begin{equation}\label{eq:piN_2(X)_again}
\pi_1(X)\,\xi_{i,j}\stackrel{\eqref{eq:Kovv_X}}=\frac{\nabla^N}{d
  t}\Big|_{t=0}\xi_{i,j}(t)=-S_{\xi_{i,j}}\,x\in T_pM\;.
\end{equation}
Since the vectors $h(x_i,x_j)$ span $\bot^1_pM$\,, \eqref{eq:fetth1},~\eqref{eq:fetth2},~\eqref{eq:piN_2(X)},~\eqref{eq:piN_2(X)_again}
imply that $X\in\frakm_0$ holds. The result follows.

Second, we consider the general case. Let $\bar M$ be a
totally geodesic submanifold of $N$ which contains $M$ such that $\bar M$
is minimal with this property (among all totally geodesic submanifolds of
$N$). Clearly, then $M$ is a full parallel submanifold of $\bar M$ whose
2-jet at $p$ is given by $(W,b)$\,, too; in particular, we have $\osc b\subset
T_p\bar M$\,. Furthermore, $\bar M$ is uniquely determined; in fact, $\bar M$ is
necessarily the connected component of the intersection of all the totally
geodesic submanifolds of $N$ which contain $M$\,.
Therefore, $G$ leaves also $\bar M$ invariant and thus $M$ has extrinsically homogeneous
holonomy bundle also in $\bar M$\,.
Even more, the Lie subgroup $\bar G\subset\Iso(\bar M)$
which is obtained as the image of the restriction map $G\to \Iso(\bar M)\,, g\mapsto
g|_{\bar M}$ has the properties described in Def.~\ref{de:homogeneous_holonomy} (now for ambient space $\bar M$).
This reduces our claim to the first case.

Finally, we decompose $X$ as $Y+Z\in\frakk\oplus\frakp$ and we recall that
$\pi_1(X)=\rho_*(Y)$ holds (see~\eqref{eq:frakp_1} and
Proposition~\ref{p:Kovv_X}).
Thus~\eqref{eq:frakm_0} implies that $A:=\rho_*(Y)$ solves~\eqref{eq:Gamma}\,.
\qed

\subsection{An extrinsic analogue of the Nomizu construction for parallel
  submanifolds with extrinsically homogeneous holonomy bundle}
\label{se:3.2}
At the end of this section, we will give the proof for 
the ``if-direction'' of Theorem~\ref{th:2-jet}.
Let $N$ be a symmetric space and let an infinitesimal model $(W,b)$ at $p$ be given.
Using similar ideas as presented in~\cite[p.~318]{Co}, we will now associate a
Lie subalgebra $\frakg\subset\fraki(N)$ with the infinitesimal
model. This can be seen as an ``extrinsic analogue of the Nomizu construction'' (cited
from~\cite[p.~315]{Co}) - but now for for parallel submanifolds 
with extrinsically homogeneous holonomy bundle in arbitrary ambient symmetric spaces.

Motivated by~\eqref{eq:frakm_0}, we define a subspace of $\fraki(N)$ via
\begin{equation}\label{eq:frakm}
\frakm:=\Menge{X\in\fraki(N)}{x:=\pi_2(X)\in W\ \text{and}\ A:=\pi_1(X)\
\text{solves}~\eqref{eq:Gamma}}\;.
\end{equation}

\begin{lemma}\label{le:triple}
Let $N$ be a symmetric space, $(W,b)$ be an infinitesimal model at $p$ and $\frakm$ be the linear space defined by~\eqref{eq:frakm}. We have
\begin{align}
\label{eq:triple1}
&[\frakm,\frakm]\subset\frakk\;,\\
&[[\frakm,\frakm],\frakm]\subset\frakm\,.
\label{eq:triple2}
\end{align}
\end{lemma}
\begin{proof}
We equip $T_pN\oplus\rho_*(\frakk)$	 with a Lie bracket according
to~\eqref{eq:bracket_1}-\eqref{eq:bracket_3}. Then by $\fraki(N)\to
\rho_*(\frakk)\oplus T_pN\,,\; X\mapsto (\pi_1(X),\pi_2(X))$
is given an isomorphism of $\Z_2$-graded Lie algebras, cf. Lemma~\ref{le:bracket}. In the
following, we replace $\fraki(N)$ by this ``alternate model'';
then $\frakk\oplus\frakp=\rho_*(\frakk)\oplus T_pN$ and $\pi_1$\,, $\pi_2$
are simply the canonical projections onto the first and second factor\,, respectively.

For~\eqref{eq:triple1}: Let $(A,x)$ and $(B,y)$ be arbitrary elements of $\frakm$\,.
Then the symmetry of $b$ implies that
\begin{align*}
[x,B]\stackrel{\eqref{eq:bracket_2}}{=}-B\,x
\stackrel{\eqref{eq:frakm}}=-\fettb(y)\,x=-b(y,x) =-b(x,y)=-[A,y]\;;
\end{align*}
hence, according to~\eqref{eq:bracket_1} and~\eqref{eq:bracket_3},~\eqref{eq:triple1} holds and,
moreover, \begin{align}\label{eq:triple3}
&[(A,x),(B,y)](\osc b)\subset\osc b\;, \\
&[(A,x),(B,y)]|_{\osc b}=-R^N(x,y)+[\fettb(x),\fettb(y)]\;.
\label{eq:triple4}
\end{align}
Then~\eqref{eq:triple2} follows
from~\eqref{eq:fettb_is_semiparallel},~\eqref{eq:frakm},~\eqref{eq:triple3} and~\eqref{eq:triple4}.
\qed \end{proof}

Using the previous lemma and the Jacobi identity,
we obtain that the linear space $\frakg:=[\frakm,\frakm]+\frakm$
is even a Lie subalgebra of $\fraki(N)$\,. Let $G$ denote the connected Lie
subgroup of $\Iso(N)$ whose Lie algebra is $\frakg$\,.
Furthermore, consider the orbit $M:=G\,p$\,.

\begin{lemma}\label{le:orbit}
In the situation of Lemma~\ref{le:triple}, the 2-jet  of $M$ at $p$ is given  by $(W,b)$\,.
\end{lemma}

\begin{proof}
We note that $$T_pM=\pi_2(\frakg)\stackrel{\eqref{eq:frakm}}{\subset} W$$
where the last inclusion is actually an equality since $(W,b)$ is an infinitesimal model.
To see that $b=h_p$ holds, we observe that every $X\in\frakm$ induces a
Killing vector field on $N$ which is tangent to $M$\,.
The Gau{\ss} equation yields
$$
\forall y\in
T_pM:h(y,\pi_2(X))=(\pi_1(X)\,y)^\bot
\stackrel{\eqref{eq:fetth1},\eqref{eq:frakm}}{=}b(\pi_2(X),y)=b(y,\pi_2(X))\;.
$$
The result follows, since $\pi_2|_{\frakm}$ is onto $W$\,.
\qed \end{proof}

\paragraph{Proof for the ``if-direction'' of Theorem~\ref{th:2-jet}}
Let an infinitesimal model $(W,b)$ be given. First, we assume that $\osc b$ is not contained in any proper curvature
invariant subspace of $T_p N$\,.

Hence we may define a subspace $\frakm\subset\fraki(N)$ according to~\eqref{eq:frakm}. Then
$\frakg:=[\frakm,\frakm]\oplus\frakm$ is a subalgebra of
$\fraki(N)$ and the orbit $M:=G\,p$ of the
corresponding connected Lie subgroup of $\Iso(N)$ is a parallel submanifold
of $N$ whose  2-jet at $p$ is given by $(W,b)$	(see Lemma~\ref{le:triple} and~\ref{le:orbit}).
Let us see that $M$ is a parallel submanifold with extrinsically
homogeneous	 holonomy bundle - even more: The group $G$ itself has the
properties described in Def.~\ref{de:homogeneous_holonomy}:

First, we also assume that $\osc b$ is not contained in any proper
curvature invariant subspace of $T_pN$\,. Under this assumption, we claim that $\frakm$ is a
reductive complement in $\frakg$ (see Appendix~\ref{se:A}):

>From Lemma~\ref{le:unique}~(a) we see that $\pi_2|_{\frakm}$ is injective
(here we use that $\osc b$ is not contained in any proper
curvature invariant subspace of $T_pN$) and thus $\pi_2|_{\frakm}$ is actually an
isomorphism onto $T_pM$\,. Let $H$ denote the isotropy subgroup in $G$ at $p$ and $\rho:K\to\SO(T_pN)$ denote the
isotropy representation.
Then we have the well known relation
\begin{equation}\label{eq:pi_1-pi_2}
\pi_2(\Ad(g)\,X)=T_pg\,\pi_2(X)\qmq{and}\pi_1(\Ad(g)\,X)=\Ad(T_pg)\,\pi_1(X)
\end{equation}
for each $X\in\fraki(N)$ and $g\in \rmK$\,.
Since the 2-jet of $M$ at $p$ is given by $(W,b)$\,,
we now conclude from~\eqref{eq:Gauss_2},~\eqref{eq:Gauss_3},~\eqref{eq:frakm} and~\eqref{eq:pi_1-pi_2} that $\Ad(g)\,\frakm\subset
\frakm$ holds for each $g\in H$\,. The claim follows.

As explained in Appendix~\ref{se:A}, $TN|_M$ is a homogenous vector bundle over
$M$ and the reductive complement $\frakm$ induces a canonical
connection $\nabla^c$ on $TN|_M$\,. Since the second fundamental form of $M$ is a $\nabla^c$ -parallel
section of $\Sym^2(TM,\bot M)$ according to Part~(b) of Lemma~\ref{le:parallel}, the
parallelity of $M$	will be established by showing that \begin{equation}\label{eq:nabla_can}
\nabla^c\ \text{coincides with}\ \nabla^M\oplus\nabla^\bot\ \text{on}\ \osc M\;.
\end{equation}
Furthermore, as a consequence of~\eqref{eq:pardisp},~\eqref{eq:nabla_can}
also implies that $G$ satisfies Def.~\ref{de:homogeneous_holonomy}.
Moreover, $\Delta:=\nabla^M\oplus\nabla^\bot-\nabla^c$ is a
$\nabla^c$-parallel section of $\Hom(TM,\End(TN|_M))$ and $\osc M$ is a $\nabla^c$-parallel vector
subbundle of $TN|_M$\,, too, in accordance with Lemma~\ref{le:parallel};
thus for the compliance of~\eqref{eq:nabla_can}
it suffices to show that $\Delta(x)\,v=0$ for each $x\in T_pM$ and
$v\in\osc_pM$\,:

To this end, let $\hat\Gamma:T_pM\to\frakm$ be the inverse of $\pi_2|_{\frakm}$ and consider the curve
$\gamma:\R\to M,\gamma(t):=\exp(t\,\hat\Gamma(x))(p)$\,; note that
$\dot\gamma(0)=x$ according to~\eqref{eq:frakm}.
Furthermore, by $y(t):=T_p\exp(t\,\hat\Gamma(x))\,y$ and
$\xi(t):=T_p\exp(t\,\hat\Gamma(x))\,\xi$ there are defined $\nabla^c$-parallel
sections of $TM$ and $\bot^1M$ along $\gamma$\,, respectively, for all $y\in T_pM$ and
$\xi\in\bot^1_pM$ (see~\eqref{eq:frakm333}). Thus we have
\begin{align*}
\Delta(x)\,y&=\big(\frac{\nabla^\top}{\partial
  t}-\frac{\nabla^c}{\partial t}\big)\Big
|_{t=0}y(t)=\frac{\nabla^\top}{\partial t}\Big|_{t=0}y(t)
=(\frac{\nabla^N}{\partial t}\Big |_{t=0}y(t))^\top\\
&\stackrel{\eqref{eq:Kovv_X}}{=}(\pi_1(\hat\Gamma(x))\,y)\top
\stackrel{\eqref{eq:frakm}}{=}(\fettb(x)\,y)^\top\stackrel{\eqref{eq:odd}}{=}0\;,
\end{align*}
and for similar reasons $\Delta(x)\,\xi=0$ (see~\eqref{eq:fetth2})\;.
Thus~\eqref{eq:nabla_can} is established.

Second, we consider the general case. Let $V$ be
the intersection of all the curvature
invariant subspaces of $T_pN$ which contain $\osc b$ \,. Then $V$ is curvature
invariant; hence $\bar M:=\exp(V)$ is a totally geodesic submanifold of $N$\,. Then $(W,b)$ is an
infinitesimal model for $\bar M$\,, too; moreover, $\osc b$ is not contained
in any proper curvature invariant subspace of $T_p\bar M$\,.
Now we claim that $g(\bar M)=\bar M$ holds for every $g\in G$\,:

Equation~\ref{eq:frakm} shows that $\pi_2(X)\in V$ and
$\pi_1(X)(V)\subset V$ for every $X\in\frakm$\,; therefore, and since $\bar M$ is totally
geodesic in $N$\,,	we even have
$X^*(\bar M)\subset T\bar M\qmq{and}X^*|_{\bar M}\in\bar\frakm$\,.
Since $G$ is connected and $\frakg=[\frakm,\frakm]\oplus\frakm$ holds, our claim now follows.

Thus $G\to\Iso(\bar M),g\mapsto g|_{\bar M}$ defines a Lie group homomorphism
onto a connected Lie subgroup $\bar G\subset\Iso(\bar M)$
such that the induced Lie algebra homomorphism maps $\frakm$ onto the subspace $\bar
\frakm\subset\fraki(\bar M)$ which is defined on the analogy
of~\eqref{eq:frakm}. Then $[\bar\frakm,\bar\frakm]\oplus\bar\frakm$ is the Lie
algebra of $\bar G$ and $M=\bar G\,p$\,.
Applying the first case, we now see that $M$ is a parallel submanifold of
$\bar M$ and $\bar G$ has the properties described in
Def.~\ref{de:homogeneous_holonomy} (for ambient space $\bar M$).
Thus $M$ is a parallel submanifold of $N$\,, too, (since $\bar M$ is totally
geodesic in $N$) and $G$ has the properties described in
Def.~\ref{de:homogeneous_holonomy}. This finishes the proof.
\qed

\subsection{On the existence of non-symmetric full parallel submanifolds}
\label{se:3.3}
\begin{theorem}\label{th:new_examples?}
Let symmetric spaces $N$ and $\tilde N$ with base points $p\in M$ and $o\in \tilde M$ be
given. Let $\fraki(N)=\frakk\oplus\frakp$ and $\rho_*:\frakk\to\so(T_oN)$ denote the corresponding Cartan
decomposition and the linearized isotropy representation of $N$\,, respectively (similarly for $\tilde N$).
Suppose that there exists a full symmetric submanifold $\tilde M\subset\tilde N$ through $o$
and some proper linear subspace $V\subset T_pN$ which is not contained in any proper curvature invariant subspace of $T_pN$\,,
a linear isometry $F:T_o\tilde N\to V$ and a Lie algebra homomorphism $\hat F:\tilde\frakk\to
\frakk$ such that
\begin{align}\label{eq:cond_1}
&\forall x,y\in T_o\tilde M, v\in T_p\tilde N:F(R^{\tilde N}(x,y)\,v)=R^N(F\,x,F\,y)(F\,v)\;,\\
\label{eq:cond_2}
&\forall X\in\tilde\frakk,v\in T_o\tilde N: F(\tilde\rho_*(X)\,v)=\rho_*(\hat F(X))(F\,v)\;.
\end{align}
Then there exists a full parallel submanifold $M\subset N$ with extrinsically homogeneous  holonomy bundle
which is not extrinsically symmetric in $N$\,. More precisely, the 2-jet of
$M$ at $p$ is given by $(W,b)$\,, where $W$ denotes the linear space $F(T_o\tilde M)$ and $b:W\times W\to W^\bot$ is the bilinear map
characterized by $b(F\,x,F\,y)=F(h^{\tilde M}(x,y))$ for all $x,y\in T_o\tilde M$\,.
\end{theorem}
One can show that every full extrinsically homogeneous circle is induced by a symmetric submanifold $\rmS^1(r)\subset \R^2$ ($r>0$) as described in Theorem~\ref{th:new_examples?}; but it is not clear whether there are more examples.

\begin{proof}
We claim that $(W,b)$ is an infinitesimal model in $N$ in the sense of
Definition~\ref{de:model}:

Because $T_o\tilde M$ is curvature invariant,~\eqref{eq:cond_1} implies that $W$
is curvature invariant, too; moreover the symmetry of $h^{\tilde M}_o$ implies
by means of~\eqref{eq:cond_1} that $b$ is symmetric, and for the same
reason~\eqref{eq:fettb_is_semiparallel} holds, too.
Since $\tilde M$ is extrinsically symmetric in $\tilde N$\,, for every $\tilde x\in T_o\tilde M$ there
exists some $\tilde X\in\tilde \frakk$ with $\tilde\rho_*(\tilde
X)=\tilde\fetth(\tilde x)$ (cf.~\cite[Theorem~4]{E2})\,; then by means of~\eqref{eq:cond_2},
$x:=F(\tilde x)$ together
with $A:=\rho_*(\hat F(\tilde X))$ gives a solution to~\eqref{eq:Gamma}.
Thus Theorem~\ref{th:2-jet} exhibits the existence
of a parallel submanifold $M\subset N$ with extrinsically homogeneous
holonomy bundle such that $p\in M$\,, $T_pM=W$ and $h_p=b$\,.
Then $M$ is full in $N$ because of Theorem~\ref{th:dombi} (since $V$ is not contained in any proper curvature invariant subspace of $T_pN$).
On the other hand, every full symmetric submanifold of $N$ is even 1-full according to~\cite[Theorem~3.2]{J1}, hence $M$ is not extrinsically symmetric in $N$ (since $V$ is strictly contained in $T_pN$).
\qed \end{proof}

\subsection{A characterization of extrinsic homogeneity}
\label{se:3.4}
The following theorem seems to be new, although weaker
results are well known\footnote{For a weaker version of
  ``(a)\;$\Rightarrow$\;(b)'' see~\cite[Theorem~3]{E2}\,. For a symmetric
submanifold ``(b)\;$\Rightarrow$\;(a)'' follows from~\cite[Theorem~4]{E2}\,. For a circle (i.e. $M\cong \R$),
Theorem~\ref{th:homogen} can easily be derived from~\cite[Corollary~1.4]{MT}
combined with Example~\ref{ex:homogeneous_holonomy} of this article.}:

\begin{theorem}\label{th:homogen}
Let a parallel isometric immersion $f$ from a simply connected symmetric space $M$ into the symmetric space $N$ be given.
Let $p\in M$ be some point and $(W,b):=(T_pM,h_p)$ be the 2-jet of $M$ at $p$\,.
Then the following two assertions are equivalent:
\begin{enumerate}
\item
$f(M)$ is a submanifold with extrinsically homogeneous holonomy bundle.
\item
For each $x\in T_pM$ there exists some $X\in\frakk$ such that $A:=\rho_*(X)$ satisfies~\eqref{eq:Gamma}.
\end{enumerate}
If any of these assertions holds, then $f(M)$ is a parallel submanifold of $N$
and $f:M\to f(M)$ is a Riemannian covering.
\end{theorem}

\begin{proof}
``$(a)\Rightarrow (b)$'': If~(a) holds, then $f:M\to f(M)$ is necessarily a
Riemannian covering:

In fact, the induced map $f:M\to f(M)$ is differentiable (since $f(M)$ is
quasi-regular, hence this map
is necessarily a local isometry (by Sard's theorem), i.e. it is a Riemannian
covering (because $M$ is complete). In particular, then $f(M)$ is a parallel submanifold of $N$\,; hence our result follows from Theorem~\ref{th:2-jet}\,.

For ``$(b)\Rightarrow (a)$'': Consider the 2-jet of $f$ at $p$\,, given by $(W,b):=(T_pM,h_p)$\,.
In order to apply Theorem~\ref{th:2-jet}, we make the following observations:
\begin{itemize}
\item $T_pM$ is a curvature invariant subspace of $T_{f(p)}N$ according to Proposition~\ref{p:properties}\,.
\item~\eqref{eq:fettb_is_semiparallel} holds because
of~\eqref{eq:semiparallel_2}.
\item By assumption, for each $x\in T_pM$ there exists $X\in\frakk$ such
  that~\eqref{eq:Gamma} holds.
\end{itemize}
Thus $(W,b)$ is an infinitesimal model, hence Theorem~\ref{th:2-jet} exhibits the existence of a
parallel submanifold $\tilde M$ with extrinsically homogeneous	holonomy bundle through $f(p)$
whose 2-jet at $f(p)$ is given by $(W,b)$\,. % Let $\tau:\hat M\to\tilde M$
% denote the universal covering; furthermore, choose some $\hat
% p\in\tau^{-1}(\{f(p)\})$\,. Then both $M$ and
% $\hat M$ are simply connected symmetric spaces (see Proposition~\ref{p:properties})
% and the curvature tensors of $M$ and $\hat M$ at
% $p$ and $\hat p$\,, respectively, are the same (because of the
% Gau{\ss}-Equation).
% Thus the Theorem of Cartan/Ambrose/Hicks guarantees the existence of an
% isometry $F:M\to\hat M$ with $F(p)=\hat p$ and
% $T_pF=\Id_W$\,.\footnote{For another proof, see~\cite{He}, Ch.~IV, Lemma 1.2
% and the proof of~\cite{He}, Ch.~IV, Lemma, Th.~5.6.}
% Then $f_1:=f$ and $f_2:=\tau\circ F$ both are parallel isometric
% immersions defined from $M$ into $N$\,.
We claim that $f(M)=\tilde M$ holds:

First note that both $M$ and $\tilde M$ are complete Riemannian manifolds.
Let $\gamma:\R\to M$ be a geodesic line of $M$ through $p$\,, parameterized by arc
length, and consider the curve $c:=f\circ\gamma:\R\to N$\,. According to~\cite{St} or~\cite[p.~8-14]{JR},
there exists an orthonormal frame $e_1,\ldots e_{r}$ along $c$
and positive numbers $\kappa_1,\ldots,\kappa_{r-1}$ such that the
``Frenet equations'' $$\nabla^N e_i=-\kappa_{i-1}\,e_{i-1}+\kappa_i\,e_{i+1}$$
with $e_1=\dot c$ hold for $i=1,\ldots,r$ (here we set $\kappa_0:=\kappa_{r}:=0$ and $e_0:=e_{r+1}:=0$).
Furthermore, the coefficients $\kappa_i$ and the orthonormal frame $e_1,\ldots,
e_{r}$ are uniquely determined by $b$ and the velocity vector $\dot c(0)$\,.
Ditto for the geodesic lines of $\tilde M$ through $f(p)$\,. Because
$f$ and $\tilde M$ have the same 2-jet at $p$ and $f(p)$\,, respectively,
we now see that $f\circ \exp^M|_{T_pM}=\iota^{\tilde M}\circ \exp^{\tilde
  M}|_{T_{f(p)}\tilde M}$ holds\,. The result follows.
\qed
\end{proof}

%A proof for this Example is added at the end of Sec.~\ref{se:homogen}.
% In Sec.~\ref{se:proof_of_theorem_2}, we will deal with the case that
% $\bar M$ is irreducible.

\subsection{Proof of Example~\ref{ex:extrinsic_holonomy}}
Set $N:=\C\rmP^1\times\C\rmP^1$ and let $p:=((1:0),(1:0))$\,; then $T_pN\cong
\C^2$\,. Fix mutually orthogonal unit vectors $x=(x_1,x_2)$ and $y=(y_1,y_2)$ in $\C^2$\,.
Let $\kappa$ be a positive constant and let $c:\R\to N$
be the unique solution to the ordinary differential equation
$$\Kodel{N}{\Kodel{N}{\dot c}}(t)=-\kappa^2\dot c(t)
\qmq{with}c(0)=p,\ \dot c(0)=x\qmq{and}
\Kodel{N}{\dot c}(0)=\kappa\,y\;.$$
Then $c$ is a circle of curvature $\kappa$
(see~\cite[p.~1]{MT}) and the linear span $V:=\{x,y\}_\R$ is the second osculating space of $c$ at $0$\,.

Let $J$ denote the standard complex structure of $\C^2$ and set $\i:=\sqrt{-1}$\,;
then $\tilde J:=\diag(\i,-\i)$ (diagonal matrix) defines a second complex structure on $\C^2$\,.
Suppose that $V$ has the following properties:
\begin{itemize}
\item  The canonical projections $\C^2\to \C$ onto the first and second factor,
  respectively, both induce isomorphisms $V\to\C$\,.
\item $V$ is neither a complex nor a totally real subspace of $\C^2$ (with
  respect to $J$)\,.
\item $V$ is not $\tilde J$-invariant.
\end{itemize}
Note that the subset of the Grassmannian of 2-planes in $\C^2$ with these
properties is open and dense; thus our conditions are ``generic''.

We claim that $c$ is full:
By contradiction, suppose that there exists a proper totally
geodesic submanifold $M$ with $c(\R)\subset M$\,;
thus $T_pM$ is a curvature invariant subspace of $T_pN$ whose
dimension is two or three. W.l.o.g., we may assume that $M$ is maximal in $N$ with this property, and hence
$T_pM$ is a maximal proper curvature invariant subspace of $T_pN$ with
$V\subset T_pM$\,.
It is well known that $N$ is holomorphically isometric to the complex hypersurface
$\scrQ^2(\C):=\Menge{[z_0:\cdots:z_3]\in\C\rmP^3}{z_0^2+\cdots z_3^2=0}$ of
$\C\rmP^3$\,, usually called the (2-dimensional) ``complex
quadric''.\footnote{In fact, the ``Segre  embedding''
  $\C\rmP^1\times\C\rmP^1\to \C\rmP^3$ is onto $\scrQ^2(\C)$\,.}
Using the classification of totally geodesic submanifolds of the complex
quadric (see~\cite[Theorem~4.1 and Sec.~5]{Kl}), we thus infer that $M$ is necessarily of the following ``type'':
\begin{itemize}
\item ``Type $(G3)$'':\ Then
there exists a totally geodesic embedding $\rmS^1\hookrightarrow \C\rmP^1$ and either $M\cong\C\rmP^1\times\rmS^1$
or $M\cong\rmS^1\times \C\rmP^1$ such that the product structure of $M$ is compatible with the product structure of
$N$\,.
\item ``Type $(P1,2)$'':\ Then $\dim (M)=2$ and $M$ is a totally real submanifold\,.
\item ``Type $(P2)$'': Then $\dim(M)=2$ and $M$ is a complex submanifold\,.
\end{itemize}

Now we see: If $M$ is of Type (G3), then one of the canonical projections from
$V$ to $\C$ is not surjective, which is contrary to our assumptions.
If $M$ is of Type~$(P1,2)$ or $(P2)$\,, then $V$ is a complex or a totally real subspace of
$\C^2$\,, a contradiction again. Thus $c$ is full.

We also claim that Assertion~(b) of Theorem~\ref{th:homogen}
is not valid here, i.e. $c(\R)$ is not a homogeneous submanifold of $N$:

Suppose, by contradiction, that there exists some
$A\in\frakk$ such that linear map $A:=\rho_*(X)$
satisfies $A(V)\subset V$ and $A|_V=\fettb(x)$\,.
Clearly, $\frakk\cong\fraku(1)\oplus\fraku(1)$ and $\rho_*$ is given canonically;
hence there exists $A_i\in\fraku(1)$ for $i=1,2$ such that $A=A_1\oplus A_2$\,.
Then there exist real numbers $\mu_i$ such that $A_i$ is given by
multiplication with $\i\,\mu_i$	 for $i=1,2$\,. Then either
$\mu_1=\pm \mu_2$ and hence $A_1=\pm A_2$ holds (but then $V$ would be
complex or $\tilde J$-invariant, which is not possible)
or the set $\{\pm \mu_1,\pm \mu_2\}$ has cardinality equal to 4. In the latter
case, no proper $A$-invariant subspaces of $\C^2$ besides its two product
factors exist (not even over the real numbers). Since $V$ is $A$-invariant,
$V$ would therefore be one of the product factors, which is contrary to our assumptions.
\qed

\section{Geometry of irreducible parallel submanifolds}
\label{se:parallel_flat}
Let $N$ be a symmetric space, $M$ be a simply connected symmetric space and $f:M\to N$ be a parallel isometric immersion.
In the following, again we implicitly identify $T_pM\cong
Tf(T_pM)$\,; for convenience, the reader may assume that $M\subset N$ is a submanifold and $f=\iota^M$\,.
According to Proposition~\ref{p:properties}, $T_pM$ is even a curvature invariant subspace of
$T_{f(p)}N$\,. Hence, by virtue of a result due to E.~Cartan, \begin{equation}
\label{eq:tg} \bar M:=\exp^N(T_pM)\subset N
\end{equation}
is a totally geodesic submanifold of $N$ (where $\exp^N:T_oN\to N$ denotes the exponential spray).

Let $p\in M$ be a ``base point'' and $\hol(M)$\,, $\hol(\bar M)$ and $\hol(N)$ denote the holonomy Lie algebras of
$M$\,, $\bar M$ and $N$\,, respectively.
Since $\bar M$ is totally geodesic, the curvature tensor of $\bar
M$ at $p$ is given by $R^N|_{T_pM\times T_pM\times T_pM}$\,. Moreover, the curvature
tensors of $M$\,, $\bar M$ and $N$ are parallel, respectively; thus the Theorem of Ambrose/Singer implies that
\begin{align}
\label{eq:hol(M)}
&\hol(M)=\Spann{R^M(x,y)}{x,y\in T_pM}\subset\so(T_pM)\;,\\
\label{eq:hol(N)}
&\hol(N)=\Spann{R^N(u,v)}{u,v\in T_{f(p)}N}\subset\so(T_{f(p)}N)\;,\\
\label{eq:hol(M_top)_2}
&\hol(\bar M)=\Spann{R^N(x,y)|_{T_pM}}{x,y\in T_pM}\subset\so(T_pM)\;.
\end{align}
The following result is a consequence of~\cite[Theorem~2.4]{Ts}:

\begin{proposition}[K. Tsukada~\cite{Ts}]\label{p:hol-relations}
Let $N$ be a symmetric space, $M$ be a simply connected symmetric space and $f:M\to N$ be a parallel isometric immersion. Suppose additionally that $M$ is irreducible and that $\dim(M)\geq 2$\,. Then there are exactly the following two possibilities: \begin{itemize}
\item Either $\hol(\bar M)$ acts irreducibly on $T_pM$ (i.e. the totally geodesic submanifold $\bar M$ defined in~\eqref{eq:tg} is a flat),
\item or $\hol(\bar M)$ acts irreducibly on $T_pM$ (i.e. $\bar M$ is a locally irreducible symmetric space\footnote{\label{Fn:irreducible}It is known that a symmetric space is locally irreducible if and only if its universal covering space is irreducible.}).
\end{itemize}
\end{proposition}
Since the rest of this article is based on this proposition, let me briefly outline its proof:

\begin{proof}
First, let $T_pM=\bigoplus_{i=0}^kW_i$ be a decomposition into linear subspaces such
that $W_0$ is the maximal subspace on which $\hol(\bar M)$ acts trivially
and $W_i$ is an irreducible $\hol(\bar M)$-module for $i\geq 1$\,.
By virtue of the ``de Rham Decomposition Theorem''
(see~\cite[p.~290]{BCO}), the linear spaces $W_i$ are uniquely determined up to a permutation of the index
set $\{1,\ldots,k\}$\,.

Second, we use Proposition~\ref{p:properties}~(a) to define
a tensor field $\tilde R$ of type $(1,3)$ on $M$\,, via
$$\forall p\in M,x,y,z\in T_pM:\ \tilde R(x,y)\,z:=R^N(x,y)\,z\;.$$
It is well known that $\tilde R$ is a parallel tensor field (see~\cite[Lemma~2.3]{Ts}).

Using the parallelity of $\tilde R$ and the uniqueness of the linear spaces $W_i$\,, now one can show that $g_t(W_i)=W_i$ holds for every curve $g_t$ into the Holonomy Lie group of $M$  and each $i$\,; i.e. $W_i$ is also $\hol(M)$-invariant. Since $T_pM$ is an irreducible $\hol(M)$-module (again because of de Rhams Theorem), we hence conclude that either $T_pM=W_0$ or $T_pM=W_1$ holds. In the first case, $\bar M$ is a flat;
in the second case, $\bar M$ is a locally irreducible Riemannian space.
Since $\dim(M)\geq 2$\,, the two possibilities also exclude each other.
\qed
\end{proof}

\subsection{The case that $\bar M$ is a flat}
\label{se:Theorem4}
In this section, we will establish the following theorem:

\begin{theorem}\label{th:4}
Let a parallel isometric immersion $f$ from a simply connected irreducible
symmetric space $M$ into a symmetric space $N$ of compact or non-compact
type be given. Suppose additionally that the totally geodesic submanifold $\bar M$ defined by~\eqref{eq:tg} is a flat. Then even $f(M)$ is contained in some flat of $N$\,.
\end{theorem}

Let $N$ be a symmetric space and let $\Gamma:T_pN\to\frakp$ be the inverse of the canonical isomorphism
$\frakp\to T_pN$\,.

\begin{proposition}\label{p:curved_flat}
Let $N$ be a symmetric space of compact or non-compact type and let a linear subspace $W\subset T_pN$ be given. The following is equivalent:
\begin{enumerate}
\item $W$ is a curvature isotropic subspace of $T_pN$\,.
\item $[\Gamma(u),\Gamma(v)]=0$ for all $u,v\in W$\,.
\item $\exp^N(W)$ is a flat.
\item The sectional curvature of $N$ vanishes on every 2-plane of $W$\,, i.e.
$\g{R^N(u,v)\,v}{u}=0$ for all $u,v\in W$\,.
\end{enumerate}
\end{proposition}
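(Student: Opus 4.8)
The plan is to reduce everything to a single bracket identity in $\fraki(N)$ and then to exploit the sign-definiteness of the Killing form $B$ that the compact/non-compact type hypothesis provides. Recall that (a) asserts $R^N(u,v)=0$ (as an endomorphism of $T_oN$) for all $u,v\in W$. First I would note that since $\Gamma_u\in\frakp$ we have $\pi_2(\Gamma_u)=0$ by \eqref{eq:frakp_1}, so $\iota(\Gamma_u)=\pi_1(\Gamma_u)=u$; and since $[\frakp,\frakp]\subset\frakk$, applying the Lie algebra isomorphism $\iota$ of Lemma~\ref{le:bracket} gives $\pi_2([\Gamma_u,\Gamma_v])=\iota([\Gamma_u,\Gamma_v])=[u,v]=-R^N(u,v)$, where the last bracket is the one of \eqref{eq:bracket_1}. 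Because $\pi_2|\frakk$ is faithful (see \eqref{eq:unique}) and $[\Gamma_u,\Gamma_v]\in\frakk$, this shows at once that $R^N(u,v)=0$ for all $u,v\in W$ if and only if $[\Gamma_u,\Gamma_v]=0$ for all $u,v\in W$; that is, (a)$\Leftrightarrow$(b).

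The decisive step is (b)$\Leftrightarrow$(d). Under the identification $T_oN\cong\frakp$ via $\pi_1|\frakp$ the curvature is $R^N(u,v)w=-[[\Gamma_u,\Gamma_v],\Gamma_w]$ (this is \eqref{eq:correspondence} combined with the previous paragraph), and the Riemannian metric on $T_oN$ is a positive multiple of $\mp B|\frakp$, the sign depending on whether $N$ is of compact or of non-compact type. Using the invariance of $B$ twice, $\g{R^N(u,v)v}{u}$ becomes a nonzero constant multiple of $-B([\Gamma_u,\Gamma_v],[\Gamma_u,\Gamma_v])$. Now $[\Gamma_u,\Gamma_v]$ lies in $\frakk$, on which $B$ is negative definite in \emph{both} cases (the isotropy algebra $\frakk$ is always compact), so this expression vanishes precisely when $[\Gamma_u,\Gamma_v]=0$. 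Hence $\g{R^N(u,v)v}{u}=0$ for all $u,v\in W$ is equivalent to (b). I expect this to be the main obstacle: the whole point is the definiteness of $B|\frakk$, and without the compact/non-compact type assumption the pointwise vanishing of the sectional curvature in (d) would not be strong enough to force the bracket (equivalently the full curvature operator) to vanish.

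It remains to bring in (c) and to deduce the final clause. For (b)$\Rightarrow$(c), observe that $\frakm:=\Gamma(W)$ is then an abelian subspace of $\frakp$, so $[[\frakm,\frakm],\frakm]=\{0\}\subset\frakm$ makes it a Lie triple system; thus $\exp^N(W)$ is totally geodesic, and its curvature $-[[\Gamma_u,\Gamma_v],\Gamma_w]=0$ shows it is intrinsically flat, i.e.\ a flat of $N$. For (c)$\Rightarrow$(d), a flat is totally geodesic, so by the Gau{\ss} equation its sectional curvatures coincide with those of $N$ on $2$-planes of $W$ and vanish. Together with the already-established (d)$\Rightarrow$(b), this closes the cycle and proves all four statements equivalent. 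Finally, since in this section $\bar M=\exp^N(T_oM)$ is assumed to be a flat of $N$, condition (c) holds with $W=T_oM$, and the equivalence yields that $T_oM$ is curvature isotropic.
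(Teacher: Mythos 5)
Your route coincides with the paper's own: (a)$\Leftrightarrow$(b) via the Lie algebra isomorphism $\iota$ of Lemma~\ref{le:bracket} applied to~\eqref{eq:bracket_1}, (b)$\Leftrightarrow$(c) via the Lie triple system characterization of totally geodesic submanifolds (the paper simply cites~\cite{He},~Ch.\,V,~Prop.~6.1), (c)$\Rightarrow$(d) by total geodesy, and (d)$\Rightarrow$(b) by a definiteness argument for the Killing form. Your treatment of (a)$\Leftrightarrow$(b), of (b)$\Rightarrow$(c)$\Rightarrow$(d), and of the final clause (which indeed just invokes the standing assumption of Section~\ref{s:Theorem4} that $\bar M$ is a flat) is correct, and your observation that $B|\frakk$ is negative definite for both types is accurate and even a bit cleaner than the paper's phrasing.

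There is, however, a gap in your proof of the decisive implication (d)$\Rightarrow$(b): you assert that the Riemannian metric on $T_oN\cong\frakp$ is a \emph{single} positive multiple of $\mp B|\frakp$. This holds only when $N$ is irreducible (then any two invariant inner products on $\frakp$ are proportional); it fails for reducible $N$ of compact or of non-compact type, whose de\,Rham factors may be scaled independently of their Killing forms. The reducible case is genuinely needed here: Theorem~\ref{th:Theorem4} assumes only the type hypothesis, and reducible ambient spaces such as $N=\C\rmP^1\times\C\rmP^1$ (Example~\ref{ex:counter_example}) occur in the paper. The paper is careful on this point: it carries out exactly your computation $K_{u,v}=c\,B([\Gamma_u,\Gamma_v],[\Gamma_u,\Gamma_v])$ only ``in case $N$ is irreducible'' and refers to~\cite{He},~Ch.\,V,~\S~3, Equation~(1) for the general case. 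Your argument can be repaired in the same spirit: decompose $\fraki(N)$ into the ideals $\frakg_i$ belonging to the irreducible factors of $N$; since these ideals commute pairwise, $[\Gamma_u,\Gamma_v]=\sum_i[\Gamma_{u_i},\Gamma_{v_i}]$ and $\g{R^N(u,v)\,v}{u}=\sum_i c_i\,B_i([\Gamma_{u_i},\Gamma_{v_i}],[\Gamma_{u_i},\Gamma_{v_i}])$, where $B_i$ is the Killing form of $\frakg_i$ and $c_i\neq 0$ is the scaling constant of the $i$-th factor. The type hypothesis forces all $c_i$ to have one and the same sign, while each $[\Gamma_{u_i},\Gamma_{v_i}]$ lies in $\frakk_i$, on which $B_i$ is negative definite; hence all summands have the same sign, so the vanishing of the sum forces every bracket $[\Gamma_{u_i},\Gamma_{v_i}]$ to vanish. (This is also precisely where a mixed-type ambient space would break the proposition, since summands of opposite sign could cancel --- confirming your intuition that the definiteness is the main obstacle, but showing that the obstacle is not fully cleared by the irreducible computation alone.)
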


\begin{proof}
 $(a)\Leftrightarrow(b)$  is an immediate consequence of
Lemma~\ref{le:bracket}\,.
 For $(b)\Leftrightarrow(c)$
 cf.~\cite[Ch.~V,~Proposition~6.1]{He}. $(c)\Rightarrow~(d)$ is obvious.
Let me give a proof for $(d)\Rightarrow~(b)$ in case $N$ is irreducible :
Via the canonical isomorphism $\frakp\to T_pN$\,,
the metric is given at $p$ by a
multiple $c\neq 0$ of the Killing form $B$ of $\fraki(N)$ restricted to
$\frakp$\,; where without loss of generality we may assume that $c\in\{-1,1\}$\,. Let two
orthonormal vectors $u,v\in T_pN$ be given and denote by $\kappa_{u,v}$ the sectional
curvature of the 2-plane spanned by $\{u,v\}$\,. Then, according
to~\cite[Ch.~V,~\S~3, Equation~2]{He},
$$
\kappa_{u,v}=c\,B([\Gamma(u),\Gamma(v)],[\Gamma(u),\Gamma(v)])\;;
$$
hence $\kappa_{u,v}=0$ forces $[\Gamma(u),\Gamma(v)]=0$ by means of the (positive or negative) definiteness
of $B$\,. $(d)\Rightarrow~(b)$ immediately follows. For the general case,
cf.~\cite[Ch.~V,~\S~3]{He}, and use (1) there instead of~(2).
\qed \end{proof}

\begin{lemma}\label{le:parallel_flat_1}
Let $f:M\to N$ be a parallel isometric immersion and suppose that $N$ is a
symmetric space of compact or non-compact type. If the totally geodesic submanifold $\bar M$~\eqref{eq:tg} is a flat, then
$T_pM$ and $\bot^1_pf$ both are curvature isotropic subspaces of $T_pN$.
\end{lemma}
\begin{proof}
For $T_pM$ use Proposition~\ref{p:curved_flat}. For $\bot^1_pf$\,, let
$\xi,\eta\in\bot^1_pf$ be given.
According to Proposition~\ref{p:curved_flat},
it suffices to show that $\g{R^N(\xi,\eta)\,\eta}{\xi}=0$\,:

Without loss of generality, we may assume that there
exist $x,y\in T_pM$ with $\xi=h(x,x)$\,, $\eta=h(y,y)$ (since $h$ is a symmetric bilinear map).
Since $T_pM$ is curvature isotropic, and because
$\fetth(x)\,\fetth(y)\,(T_pM)\subset T_pM$ holds\,, r.h.s.\ of
\eqref{eq:fundamental} vanishes on $\osc_pf$\,; and so $R^N(\xi,\eta)$ vanishes on $\osc_pf$\,,
too\,, according to Proposition~\ref{p:properties}~(b) . In particular, $\g{R^N(\xi,\eta)\,\eta}{\xi}=0$\,.
\qed \end{proof}

For each subspace $V\subset\so(\osc_pf)$ we introduce its centralizer in $\so(\osc_pf)$\,,
\begin{equation}\label{eq:second_subspace}
\frakc(V):=\Menge{A\in\so(\osc_pf)}{\forall B\in V:A\circ B=B\circ A}\,.
\end{equation}

\begin{proposition}\label{p:parallel_flat_2}
 Let $f$ be a parallel isometric immersion from a simply connected irreducible symmetric space $M$ into the symmetric space $N$\,. If the totally geodesic submanifold $\bar M$~\eqref{eq:tg} is a flat, then $\frakc(\fetth(T_pM))\cap\so(\osc_pf)_-=\{0\}$\,.
\end{proposition}

However, the proof of Proposition~\ref{p:parallel_flat_2} has to be postponed until
the end of this section.

\begin{proposition}\label{p:parallel_flat_3}
 Let $f$ be a parallel isometric immersion from a simply connected, irreducible symmetric space $M$ into the symmetric space $N$ of compact or non-compact type. If the totally geodesic submanifold $\bar M$~\eqref{eq:tg} is a flat, then the second osculating space $\osc_pf$ is a curvature isotropic subspace of $T_{f(p)}N$\,.
\end{proposition}

\begin{proof}
By virtue of Proposition~\ref{p:curved_flat}, it is enough to show that
$\g{R^N(v_1,v_2)\,v_3}{v_4}=0$ for all $v_1,v_2,v_3,v_4\in\osc_pf$\,. Furthermore, according to
Lemma~\ref{le:parallel_flat_1}, we have $\g{R^N(x,y)\,u}{v}=0$
and $\g{R^N(\xi,\eta)\,u}{v}=0$ for all $x,y\in T_pM$\,, $\xi,\eta\in
\bot^1_pf$ and $u,v\in\osc_pf$\,; and hence it remains to prove that $\g{R^N(x,\xi)\,u}{v}=0$ for all
$x\in T_pM$\,, $\xi\in\bot^1_pf$ and $u,v\in\osc_pf$\,. To this
end, let $x\in T_pM$
and $\xi\in\bot^1_pf$ be arbitrary, but fixed, and $A\in\so(\osc_pf)$ be the linear map
characterized by
$$
\forall u,v\in\osc_pf:\g{A\,u}{v}=\g{R^N(x,\xi)\,u}{v}\;.
$$
We claim that $A$ belongs to $\frakc(\fetth(T_pM))\cap\so(\osc_pf)_-$\,:

In fact, using the symmetries of $R^N$\,, we
have $$\forall x',y'\in T_pM:
\g{A\,x'}{y'}=\g{R^N(x,\xi)\,x'}{y'}=\g{R^N(x',y')\,x}{\xi}=0\;;$$
and furthermore, using similar arguments, $\g{A\,\xi'}{\eta'}=0$ for all
$\xi',\eta'\in\bot^1_pf$\,. Hence $A\in\so(\osc_pf)_-$\,, in accordance with~\eqref{eq:even}\,. Moreover, by means of~\eqref{eq:fetth_3}, we have for all $u,v\in\osc_pf,y\in T_pM$\,:
\begin{align*}
\g{[\fetth(y),A]\,u}{v}&=-\g{R^N(x,\xi)\,u}{\fetth(y)\,v}-\g{R^N(x,\xi)\,\fetth(y)\,u}{v}\\
&\stackrel{\eqref{eq:fetth_3}}{=}\g{R^N(h(x,y),\xi)\,u}{v}-\g{R^N(x,S_\xi\,y)\,u}{v}=0
\end{align*}
(because $(h(x,y),\xi)\in\bot^1_pf\times \bot^1_pf$ and $(x,S_\xi\,y)\in T_pM\times T_pM$)\,.
Thus $A\in\frakc(\fetth(T_pM))\cap\so(\osc_pf)_-$\,, and therefore, according
to Proposition~\ref{p:parallel_flat_2}, $A=0$\,; the result follows.
\qed \end{proof}
\paragraph{Proof of Theorem~\ref{th:4}} Set $\bar N:=\exp(\osc_pf)$\,;
then $\bar N$ is a flat, as a consequence of
Proposition~\ref{p:parallel_flat_3} and Proposition~\ref{p:curved_flat}\,.
Furthermore, $f(M)$ is contained in $\bar N$\,, by virtue of Theorem~\ref{th:dombi}\,.
\qed

\paragraph{Now we come to the proof of Proposition~\ref{p:parallel_flat_2}.}

\begin{lemma}\label{le:Le1000}
In the situation of Proposition~\ref{p:parallel_flat_2},
there exists a complete full parallel submanifold $\tilde M\subset\osc_pf$ with $0\in\tilde M$\,, $T_0\tilde
M=T_pM$ and $\tilde h_0=h_p:T_pM\times T_pM\to\bot^1_pf$\,. Moreover, $\tilde M$ is a locally irreducible symmetric space.
\end{lemma}
\begin{proof}
We consider the Euclidian vector space $V:=\osc_pf$ and the 2-jet which is given by
$(W:=T_pM,b:=h_p)$\,. Since $T_pM$ is a curvature isotropic subspace of $T_{f(p)}N$\,, we have by virtue of~\eqref{eq:semiparallel_2}
\begin{equation}\label{eq:Gleichung30}
\forall x,y,z\in W, v\in V:\fetth([\fetth(x),\fetth(y)]\,z)\,v=[[\fetth(x),\fetth(y)],\fetth(z)]\,v\;.
\end{equation}
Therefore, $(W,b)$ is an infinitesimal model in $V$ (since $V$ is a Euclidian space)
and hence, according to Theorem~\ref{th:2-jet}, there exists a parallel
submanifold $\tilde M\subset V$ through $0$ whose 2-jet at $0$ is given by
$(W,b)$\,; in particular, $\tilde M$ is a symmetric space. Using the equation of Gau\ss\,, we notice that
$R^M(x,y)\,z=R^{\tilde M}(x,y)\,z$ (since $T_pM$ is curvature isotropic in $N$)\,, i.e. $\hol(\tilde M)=\hol(M)$ and hence $\tilde M$ is locally irreducible (since $M$ has this property, see Footnote~\ref{Fn:irreducible}).
\qed \end{proof}

Motivated by the last lemma, we will now study properties of the second fundamental
form of an (intrinsically) irreducible full parallel submanifold $\tilde M$
of the Euclidian space $E$\,.

Let $\tilde N$ be a simply connected irreducible symmetric space of compact type whose
isotropy subgroup at $\tilde p$ is denoted by $\tilde K$ and whose Cartan decomposition is given
by $\fraki(\tilde N)=\tilde\frakk\oplus\tilde\frakp$\,.
Let $\tilde B$ denote the Killing form of $\fraki(\tilde N)$\,; then, since
$\fraki(\tilde N)$ is a compact, semisimple Lie algebra
(cf.~\cite[Ch.~V,~\S~1]{He}), $\tilde B$ is a negative definite,
invariant form (cf.~\cite[Ch.~II,~\S~6]{He}). Thus $\tilde\frakp$ is a
Euclidian vector space by means of the positive definite symmetric bilinear
form $-\tilde B|_{\tilde\frakp\times\tilde\frakp}$\,.
Let $\Ad$ and $\ad$ denote
the adjoint representations of $\Iso(\tilde N)$ and $\fraki(\tilde N)$\,, respectively.
Then $\Ad$ induces a faithful orthogonal representation of
$K$ on $\frakp$\,, by restriction; the corresponding infinitesimal action is
given by $\ad_{\tilde\frakp}:\frakk\to\so(\frakp),X\mapsto \ad(X)|_{\frakp}$\,.

\begin{definition}[{see~\cite[Example~3.7]{BCO}}]\label{de:symmetric_R-space}
In the situation described above, suppose that there exists some
$X\in\tilde\frakp$ with $\ad(X)^3=-\ad(X)$ and $X\neq 0$\,. Then $\Ad(\tilde K)\,X\subset\tilde\frakp$ is called a
(standard embedded) {\em irreducible symmetric R-space}.
\end{definition}

It is well known that every irreducible symmetric R-space $\Ad(\tilde K)\,X$ is a full symmetric submanifold of $\tilde\frakp$\,.
In particular, $\Ad(\tilde K)\,X$ is a parallel submanifold which is intrinsically a symmetric space; however,
$\Ad(\tilde K)\,X$ is not necessarily a (locally) {\em irreducible} symmetric space; this can be seen from Example~\ref{ex:extrinsic_holonomy2}. Conversely, as a consequence of~\cite[Theorem~3.7.8]{BCO} we have:

\begin{theorem}[Ferus]\label{th:Ferus}
For every full complete (intrinsically) locally irreducible parallel
submanifold $\tilde M$ of a Euclidian space $E$ there exists a simply connected irreducible
symmetric space $\tilde N$ of compact type which admits a
standard embedded irreducible symmetric R-space $\Ad(\tilde K)\,X$ and (after scaling the metric on $V$ by a positive factor) an
isometry $F:E\to\tilde\frakp$ such that $F(\tilde M)=\Ad(\tilde K)\,X$\,.
\end{theorem}
Clearly, for every irreducible symmetric R-space $\tilde M:=\Ad(\tilde K)\,X\subset\tilde\frakp$ we obtain the decomposition $\tilde\frakp=\tilde\frakp_-\oplus\tilde\frakp_+:=T_X\tilde M\oplus \bot_X\tilde M$ and the induced decomposition $\so(\tilde\frakp)=\so(\tilde\frakp)_+\oplus \so(\tilde\frakp)_-$ (see Eqs.~\eqref{eq:even} and~\eqref{eq:odd}).
Furthermore, let
$\ad(Z)_{\tilde\frakp}:=\ad(Z)|_{\tilde\frakp}:\tilde\frakp\to\tilde\frakp$ denote the induced
endomorphism of $\tilde\frakp$ for each $Z\in\tilde\frakk$\,.
In this situation, we have:

\begin{lemma}\label{le:10000}
Let $\Ad(\tilde K)\,X\subset\tilde\frakp$ be an irreducible symmetric R-space.
\begin{enumerate}
\item
There exist a decomposition
\begin{align}
\label{eq:Cartan_for_k}
&\tilde\frakk=\tilde\frakk_+\oplus\tilde\frakk_-\;\ \text{such that}\\
\label{eq:ad_frakp}
&\ad_{\tilde\frakp}(\tilde\frakk_+)\subset\so(\tilde\frakp)_+\qmq{and}\ad_{\tilde\frakp}(\tilde\frakk_-)\subset\so(\tilde\frakp)_-\,.
\end{align}
\item We have $\ad_{\tilde\frakp}(\tilde\frakk_-)=\tilde\fetth(T_X\tilde M)$ and $\ad_{\tilde\frakp}(\tilde\frakk_+)=[\tilde\fetth(T_X\tilde M),\tilde\fetth(T_X\tilde M)]$ (where the latter denotes the corresponding commutator ideal).
\item Let $\tilde h$ denote the second fundamental form of $\tilde M$ at $X$ and $\tilde \fetth:\tilde\frakp_-\to\so(\tilde\frakp)_-$ be the 1-form associated therewith according to~\eqref{eq:fetth1}. If some $A\in \so(\tilde\frakp)$
satisfies $[A,\tilde\fetth(\tilde\frakp_-)]\subset\tilde\fetth(\tilde\frakp_-)$\,, then $A\in
\ad_{\tilde\frakp}(\tilde\frakk)$\,.
\end{enumerate}
\end{lemma}
\begin{proof}
For Part~(a) see~\cite[Propositions A.2 and B.2~(a)]{J1}. For Parts~(b) and~(c) use \cite[Propositions B.2~(c) and~(d)]{J1}, respectively.
\end{proof}
Let $\tilde\frakc$ denote the center of $\tilde\frakk$\,.

\begin{corollary}\label{co:symmetric_R-space}
Let $\Ad(\tilde K)\,X\subset\tilde\frakp$ be an irreducible symmetric R-space, $\tilde h$ be its second fundamental form at $X$ and $\tilde \fetth:\tilde\frakp_-\to\so(\tilde\frakp)_-$ be the corresponding 1-form (see~\eqref{eq:fetth1}). If some $A\in\so(\tilde\frakp)$
satisfies $[A,\tilde\fetth(T_X\tilde M)]=\{0\}$\,, then there exists some $X\in\tilde\frakc$ with $\ad_{\tilde\frakp}(X)=A$\,.
If additionally $A\in\so(\tilde\frakp)_-$\,, then $X\in\tilde\frakc\cap\tilde\frakk_-$\,.
\end{corollary}
\begin{proof}
Using Lemma~\ref{le:10000}~(a) and~(b), we see that any such $A$ necessarily commutes with $\ad_{\tilde\frakp}(X)$ even for all $X\in\tilde\frakk$\,.
Furthermore, there exists some $Y\in\frakk$ with $A=\ad_{\tilde\frakp}(Y)$ according to Lemma~\ref{le:10000}~(c). Thus $Y\in\tilde\frakc$\,, since $\ad_{\tilde\frakp}$ is a faithful representation (in fact, $\ad_{\tilde\frakp}$ is equivalent to the linearized isotropy representation (cf.~\cite[Lemma B.1]{J1})). The last assertion follows from~\eqref{eq:ad_frakp}.
\end{proof}

\begin{lemma}\label{le:symmetric_R-space}
Suppose that $\tilde M\subset\tilde \frakp$ is a standard embedded irreducible symmetric R-space which is (intrinsically) a locally irreducible symmetric space. Then $\tilde\frakc\cap\tilde\frakk_-=\{0\}$\,.
\end{lemma}

\begin{proof}
Let $\tilde\frako$ denote the orthogonal complement of $\tilde\frakc\cap\tilde\frakk_-$ in
$\tilde\frakk_-$ with respect to $\tilde B$\,, and consider the
following two orthogonal symmetric Lie algebras:
\begin{itemize}
\item $\tilde\frakk_0\oplus\tilde\frako$ (which corresponds to a
simply connected symmetric space of compact type whose dimension is equal to the
dimension of $\tilde\frako$), and
\item $\tilde\frakc\cap\tilde\frakk_-$ (which corresponds to a Euclidian space
  whose dimension is equal to the dimension of $\tilde\frakc\cap\tilde\frakk_-$)
\end{itemize}
This is a decomposition of the orthogonal symmetric Lie algebra $\tilde\frakk$
(see~\eqref{eq:Cartan_for_k}) into ideals as described in~\cite[Ch.~V,~Theorem~1.1]{He};
hence, the universal covering space of $\tilde M$ splits off a
Euclidian factor whose dimension is equal to the dimension of
$\tilde\frakc\cap\tilde\frakk_-$ as described in the proof
of~\cite[Ch.~V,~Proposition~4.2]{He} (i.e. we apply the ``de\,Rham decomposition
theorem'' to the universal covering space of $\tilde M$). Therefore, $\tilde\frakc\cap\tilde\frakk_-=\{0\}$\,, since
$\tilde M$ is locally irreducible.
\qed \end{proof}

\paragraph{Proof of Proposition~\ref{p:parallel_flat_2}} By means of Lemma~\ref{le:Le1000}, it suffices to show that $\frakc(\tilde\fetth(T_pM))\cap\so(V)_-=\{0\}$ holds where $\tilde\fetth$ is the 1-form which is associated with the second fundamental form of a full intrinsically irreducible complete parallel submanifold of some Euclidian space $V$\,. Furthermore, by the strength of Theorem~\ref{th:Ferus}, we can assume that $V=\tilde\frakp$ and $\tilde M$ is a standard embedded irreducible symmetric R-space. Now Proposition~\ref{p:parallel_flat_2} is a consequence of Corollary~\ref{co:symmetric_R-space} in combination with Lemma~\ref{le:symmetric_R-space}.
\qed

\subsection{The case that $\bar M$ is locally irreducible}
\label{se:proof_of_theorem_2}
Given a parallel isometric immersion $f:M\to N$ into some symmetric space,
the second osculating bundle $\osc f$ is a parallel vector subbundle of $f^*TN$ (see Proposition~\ref{p:properties});
hence $\nabla^N$ defines a linear connection on $\osc f$ (through restriction).

\begin{definition}\label{de:HolonomyLieAlgebra}
Given a fixed point $p\in M$\,, let $\hol(\osc f)\subset\so(\osc_pf)$ denote the
holonomy Lie algebra of $\osc f$ with respect to $\nabla^N$ and the base point
$p$\,, called the ``extrinsic'' holonomy Lie algebra of $\osc f$ (see~\cite{J1}, Section~5).
\end{definition}

Furthermore, recall the definition of the linear map  $\fetth:T_pM\to\so(T_oN)$ (see~\ref{eq:fetth1}).
In this section, we will prove the following theorem:

\begin{theorem}\label{th:fetth_in_hol}
Let a parallel isometric immersion $f$
from a simply connected irreducible symmetric space $M$ with $\dim(M)\geq 3$ into the symmetric space $N$ be given.
If the totally geodesic submanifold $\bar M$ (see~\eqref{eq:tg}) is locally irreducible, then
\begin{equation}\label{eq:fetth_in_hol}
\fetth(T_pM)\subset\hol(\osc f)\;.
\end{equation}
\end{theorem}

 There exist higher-dimensional, extrinsically symmetric submanifolds
 of some irreducible symmetric space $N$ which are not locally irreducible and for which~\eqref{eq:fetth_in_hol} fails:

\begin{example}\label{ex:extrinsic_holonomy2}
Let $N$ be a simply connected irreducible symmetric space of compact type,
$\fraki(N)=\frakk\oplus\frakp$ be the Cartan
decomposition and suppose that there exists some $X\in\frakp$ with $\ad(X)^3=-\ad(X)$\,.
Then the orbit $M_X:=\Ad(K)\, X$ is an irreducible symmetric R-space (see Section~\ref{se:Theorem4}).
By means of the canonical identification $\frakp\cong T_pN$\,, we define a subset of $N$ via
$\tilde M_t:=\exp^N(t\,M_X)$ for each $t\in\R$\,.
Then the family $\tilde M_t$ where $t$ ranges over $\R$ is called a ``chain of spindles of
$M$'' (see~\cite{Q})\,; it is well known that in this situation $M_t$
is a symmetric submanifold of $N$ (see~\cite[Lemma~5]{Q}) which degenerates to a point for $t\in \Z\,\pi$
and which is totally geodesic for $t\in \frac{1}{2}\,\Z\,\pi$\,. These submanifolds
have been extensively studied by various authors before; in fact, they play the most
prominent role among the symmetric submanifolds of irreducible compact symmetric
spaces of higher rank (see~\cite[Proposition~9.3.3 and Theorem 9.3.4]{BCO}).

Now suppose that $N$ is an irreducible Hermitian symmetric space; for instance, there is the pair
$(\Sp(n)/\rmU(n),\rmU(n)/\SO(n))$\,. Keeping some $t\in]0,\pi/2[$ fixed, set $M:=M_t$
and let $\tau:\hat M\to M$ denote the universal covering; then
$f:=\iota^M\circ\tau:\hat M\to N$ is a 1-full\footnote{\label{Fn:1-full}This
  means that $\osc_qf=T_qN$ holds for all $q\in M$\,.} parallel isometric
immersion which does not satisfy~\eqref{eq:fetth_in_hol}, for the following reson: Let $\j$ denote the complex structure of
$T_pN$\,; then $\j\in\fetth(T_oM)$ holds but $\j$ does not belong to $\hol(f^*TN)$ (cf.~\cite[Theorem~1.9]{J1}).
In view of Theorem~\ref{th:MainResult}, this is because $M$ splits of locally a 1-dimensional factor: In fact, it follows from the proof of Lemma~\ref{le:symmetric_R-space} that $M_X$ splits of locally a 1-dimensional factor (since $\frakc\cap\so(\frakp)_-\neq\{0\}$\,, see~\cite[Lemma 6.12]{J1}); furthermore, $M_X\to M, Y\mapsto \exp^N(t\, Y)$ defines a
homothety (by a factor of $\sin(t)$) for each $t\in[0,\pi[$ \footnote{This map is injective for each $t\in[0,\pi[$ since $N$ is simply connected (see~\cite{Cr}).} and hence $M$ splits of locally a 1-dimensional factor, too.
Note that the three assertions of Theorem~\ref{th:MainResult} are nonetheless valid
here. A similar result holds also in the non-compact case (see~\cite[Proposition~9.3.8]{BCO}).
\end{example}

We aim to prove Theorem~\ref{th:fetth_in_hol}.
Let $f:M\to N$ be a parallel isometric immersion and $p$ be some fixed point of $M$\,. In the following, $\ad$ and $\Ad$ will denote the adjoint representations of
$\so(\osc_pf)$ and\ $\SO(\osc_pf)$\,, respectively. As a consequence of the
Jacobi identity, the linear map $\ad(A)$ is
a derivation of the Lie algebra $\so(\osc_pf)$ for each $A\in\so(\osc_pf)$\,, i.e. for all
$B,C\in\so(\osc_pf)$ we have
\begin{equation}
\label{eq:derivivation}
\ad(A)\,[B,C]=[\ad(A)\,B,C]+[B,\ad(A)\,C]\;.
\end{equation}
The next Proposition prepares a purely algebraic approach towards~\eqref{eq:fetth_in_hol}\,.

\begin{proposition}\label{p:results}
Let $f$ be a parallel isometric immersion from a simply connected
symmetric space $M$ into the symmetric space $N$\,, let $p$ be some fixed point of $M$ and consider the Lie algebra $\hol(\osc f)$ (see Definition~\ref{de:HolonomyLieAlgebra}).
\begin{enumerate}
\item
Let	 $\sigma^\bot:\osc_pf\to\osc_pf$ denote the linear reflection in $\bot^1_pf$\,.
Then we have
\begin{equation}\label{eq:Sigma(hol(osc_M))}
\Ad(\sigma^\bot)(\hol(\osc f))=\hol(\osc f)\;.
\end{equation}
Consequently, we obtain the decomposition
\begin{equation}\label{eq:splitting_of_hol}
\hol(\osc f)=\hol(\osc f)_+\oplus \hol(\osc f)_-
\end{equation}
with $\hol(\osc f)_+:=\so(\osc_pf)_+\cap\hol(\osc f)$ and $\hol(\osc f)_-:=\so(\osc_pf)_-\cap\hol(\osc f)$\;.
\item For each $x\in T_pM$\,, $\ad\big(\fetth(x)\big)$ defines an {\em outer derivation}\/ of
  $\hol(\osc f)$\,, i.e. we have
\begin{equation}\label{eq:outer_derivation}
[\fetth(x),\hol(\osc f)]\subset\hol(\osc f)\;.\footnote{Note that in
  case~\eqref{eq:fetth_in_hol} holds,~\eqref{eq:outer_derivation} is obvious
  (since $\hol(\osc f)$ is a Lie subalgebra of $\so(\osc_pf)$).}
\end{equation}
\item The vector space
\begin{equation}\label{eq:frakh}
\frakh:=\Spann{R^N(x,y)}{x,y\in T_pM}
\end{equation}
is a Lie subalgebra of $\so(T_{f(p)}N)$\,. For each
$A\in\frakh$ we have $A( T_pM)\subset T_pM$\,,
$A(\bot^1_pf)\subset\bot^1_pf$ and moreover $A|_{\osc_pf}\in\hol(\osc f)_+$\,.
\end{enumerate}
\end{proposition}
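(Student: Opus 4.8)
The plan is to obtain all three assertions from a single explicit description of $\hol(\osc f)$ as an iterated--bracket algebra, organized by the $\Z_2$--grading $\osc_of=T_oM\oplus\bot^1_of$. By~\eqref{eq:even}--\eqref{eq:odd}, $\so(\osc_of)_+$ and $\so(\osc_of)_-$ are precisely the block--diagonal and block--off--diagonal skew endomorphisms of this splitting, and $\Ad(\sigma^\bot)$ acts as $+1$ on the former and $-1$ on the latter. I would work with two connections on $\osc f$: the connection $\nabla$ induced by $\nabla^N$ (well defined on $\osc f$ because $\osc f$ is $\nabla^N$--parallel, Proposition~\ref{p:properties}(d)), and the Van der Waerden--Bortolotti connection $D:=\nabla^M\oplus\nabla^\bot$. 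By the equations~\eqref{eq:GW} of Gau{\ss} and Weingarten they differ exactly by $\fetth$, i.e.\ $\nabla_v-D_v=\fetth(v)$ on $\osc_pf$ with $\fetth(v)\in\so(\osc_of)_-$; consequently on $\End(\osc f)$ one has $\nabla^{\End}_v=D^{\End}_v+\ad(\fetth(v))$.

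First I would record the parity of the osculating curvature. Using the curvature invariance of $T_oM$ (Proposition~\ref{p:properties}(a)) and of $\bot^1_of$ (Proposition~\ref{p:properties}(b)), the invariance of $\osc f$ under $R^N$ (Proposition~\ref{p:properties}(d)), and the pair symmetry $\g{R^N(a,b)c}{d}=\g{R^N(c,d)a}{b}$, a short computation shows that $(R^N(u,v))^{\osc}$ lies in $\so(\osc_of)_+$ when $u,v$ are both tangent or both first--normal, and in $\so(\osc_of)_-$ when one is tangent and the other normal. In particular $R^{\osc f}(x,y)=(R^N(x,y))^{\osc}\in\so(\osc_of)_+$. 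Next I would establish the technical heart: both $\fetth$ and $R^{\osc f}$ are $D$--parallel. For $\fetth$ this is just Definition~\ref{de:parallel}. For $R^{\osc f}$ it follows from $\nabla^N R^N=0$, the parallelity of $\osc f$ and the Codazzi equation; concretely, differentiating gives $\nabla^{\End}_v R^{\osc f}(x,y)=(R^N(h(v,x),y))^{\osc}+(R^N(x,h(v,y)))^{\osc}$, which by the parity rule is odd, whereas $D^{\End}_v R^{\osc f}$ is block--diagonal, so comparing parities in $\nabla^{\End}_v R^{\osc f}=D^{\End}_v R^{\osc f}+[\fetth(v),R^{\osc f}]$ forces $D^{\End}_v R^{\osc f}=0$.

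All data being real--analytic (as $M$ and $N$ are symmetric and $f$ is parallel), the theorem of Ambrose--Singer identifies $\hol(\osc f)$ with the span of $R^{\osc f}$ and all its iterated $\nabla$--covariant derivatives at $o$ (cf.~\cite{KN}). Since $\nabla^{\End}=D^{\End}+\ad(\fetth)$ and both $R^{\osc f}$ and $\fetth$ are $D$--parallel, an induction shows that each such derivative collapses to an iterated bracket (the grading--preserving terms cancel), so that
\[
\hol(\osc f)=\Spann{\ad(\fetth(x_1))\cdots\ad(\fetth(x_k))\,(R^N(y,z))^{\osc}}{k\ge 0,\ x_i,y,z\in T_oM}.
\]
From this description the three parts follow at once. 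For~(a): a generator with $k$ brackets starts in $\so(\osc_of)_+$, and each $\ad(\fetth(x_i))$ reverses parity (as $\fetth(x_i)\in\so(\osc_of)_-$), so it lies in $\so(\osc_of)_{(-1)^k}$; hence $\hol(\osc f)$ is graded, is preserved by $\Ad(\sigma^\bot)$, and splits as $\hol(\osc f)_+\oplus\hol(\osc f)_-$. For~(b): $[\fetth(x),\,\cdot\,]$ carries a generator to a generator with one more bracket, so $\ad(\fetth(x))$ preserves $\hol(\osc f)$. For~(c): by Lemma~\ref{le:bracket} and~\eqref{eq:frakp_1} one has $R^N(x,y)=-\pi_2([\Gamma_x,\Gamma_y])$, whence $\frakh=\pi_2([\frakm,\frakm])$ with $\frakm:=\Gamma(T_oM)\subset\frakp$; since $T_oM$ is curvature invariant, $\frakm$ is a Lie triple system, so $[\frakm,\frakm]\subset\frakk$ is a subalgebra and $\frakh$ is a subalgebra of $\so(T_{f(o)}N)$ because $\pi_2|\frakk$ is a Lie homomorphism. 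Finally, for $A=R^N(x,y)$ one has $A(T_oM)\subset T_oM$, and, being skew and preserving $\osc_of$, also $A(\bot^1_of)\subset\bot^1_of$; moreover $A^{\osc}\in\so(\osc_of)_+\cap\hol(\osc f)=\hol(\osc f)_+$.

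The main obstacle is the $D$--parallelity of $R^{\osc f}$ together with the parity bookkeeping of $(R^N(u,v))^{\osc}$: both rest squarely on the two curvature--invariance statements of Proposition~\ref{p:properties} and on the parallelity of $h$, and it is exactly the splitting $\nabla=D+\fetth$ that turns covariant differentiation into bracketing with $\fetth$. The remaining delicate point is the appeal to the analytic Ambrose--Singer theorem to pass from the infinitesimal holonomy data (curvature and its derivatives) to the genuine holonomy algebra $\hol(\osc f)$; once this is granted, the $\Z_2$--grading organizes parts~(a)--(c) uniformly.
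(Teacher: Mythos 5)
Your proposal is correct in substance, but it takes a genuinely different route from the paper's. The paper settles parts~(a) and~(b) simply by citing Theorem~3 of~\cite{J1}, and proves part~(c) directly: the Lie-algebra property of $\frakh$ from the curvature invariance of $T_oM$ together with the identity $R^N\cdot R^N=0$, the membership $A^{\osc}\in\hol(\osc f)$ from~\eqref{eq:R_2(x,y)} and the \emph{easy} direction of Ambrose/Singer (curvature values at the base point lie in the holonomy algebra), and the evenness from Part~(a) of Lemma~\ref{le:splitting}. You instead derive all three parts at once from an explicit description of $\hol(\osc f)$ as the span of the iterated brackets $\ad(\fetth(x_1))\cdots\ad(\fetth(x_k))\big(R^N(y,z)\big)^{\osc}$. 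Your two pillars are sound: the decomposition $\nabla=D+\fetth$, and the $D$-parallelity of both $\fetth$ and $R^{\osc f}$ -- your parity argument for the latter is correct, since the tangent--tangent and normal--normal components of $R^N(h(v,x),y)+R^N(x,h(v,y))$ on $\osc_of$ vanish by the curvature invariance of $T_oM$ and of $\bot^1_of$ combined with the pair and skew symmetries of $R^N$ -- and the inductive collapse of the covariant derivatives to iterated brackets is a correct application of the Jacobi identity. Your substitute, in part~(c), of the relation $R^N\cdot R^N=0$ by the observation $\frakh=\pi_2([\frakm,\frakm])$ with $\frakm:=\Gamma(T_oM)$ a Lie triple system is also valid (via Lemma~\ref{le:bracket} and~\eqref{eq:frakp_1}). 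What your route buys is a self-contained proof of precisely the statement the paper outsources to~\cite{J1}, plus an explicit generating set for $\hol(\osc f)$; what the paper's route buys is brevity, and the fact that it never needs more than the trivial inclusion of curvature into holonomy.

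Two points should be shored up. First, the load-bearing step -- that $\hol(\osc f)$ is \emph{exhausted} by the curvature of $\osc f$ and its iterated covariant derivatives at $o$ -- is justified in your text by real-analyticity of ``all data''; but the real-analyticity of the parallel immersion $f$ is itself a nontrivial fact which you neither prove nor cite (it is true, e.g.\ by Str\"ubing's development argument~\cite{St} or via the parallel Gau{\ss} map of~\cite{JR}). You can bypass it entirely with material already in your proof: since $\fetth$ and $R^{\osc f}$ are $D$-parallel, $D$-parallel transport conjugates the infinitesimal holonomy algebra at $o$ onto the one at any other point, so its dimension is constant on $M$; the smooth, constant-dimension versions of the local/infinitesimal holonomy theorems (\cite{KN},~Ch.\,II,~\S\,10) then give equality with $\hol(\osc f)$, with no analyticity required. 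Second, a small imprecision: for $u,v$ not both tangent, the symbol $\big(R^N(u,v)\big)^{\osc}$ is not defined, because only tangent pairs are known to leave $\osc_of$ invariant (Part~(d) of Proposition~\ref{p:properties}); this is harmless, since what your computation actually uses is the vanishing of certain components of the restricted $(0,4)$-tensor, and the sum $R^N(h(v,x),y)+R^N(x,h(v,y))$ does preserve $\osc_of$, being a covariant derivative of $R^{\osc f}$.
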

\begin{proof}
For Parts (a) and~(b) see~\cite[Theorem~5.2]{J1}\,.

For~(c): The fact that
$\frakh$ is a Lie subalgebra of $\so(T_{f(p)}N)$ follows from the curvature invariance of $T_pM$
(see Proposition~\ref{p:properties}~(a)) combined with the well known relation $R^N\cdot R^N=0$\,, i.e.
\begin{align*}
[R^N(u_1,u_2),R^N(v_1,v_2)]=R^N(R^N(u_1,v_2)\,v_1,v_2)+R^N(v_1,R^N(u_1,u_2)\,v_2)
\end{align*}
for all $u_1,u_2,v_1,v_2\in T_{f(p)}N$\,.
Furthermore, $R^N(x,y)(\osc_pf)\subset\osc_pf$ and $R^N(x,y)|_{\osc_pf}$ is
the curvature endomorphism of $\osc f$ at $p$\,, according to
Proposition~\ref{p:properties}~(f); thus $A|_{\osc_pf}\in\hol(\osc f)$ for
each $A\in\frakh$\,, by virtue of the ``Theorem of
Ambrose/Singer''\;. Since moreover $R^N(x,y)(T_pM)\subset T_pM$\,, we have $A|_{\osc_pf}\in\so(\osc_pf_+)$ for
each $A\in\frakh$\,, in accordance with Part~(a) of Lemma~\ref{le:splitting}. The result follows.
\qed \end{proof}

With the intent to show that the ``outer derivations'' mentioned in Part~(b) of Proposition~\ref{p:results}
are in fact ``inner derivations'' of $\hol(\osc f)$\,, we consider the usual
positive definite scalar product
on $\so(\osc_pf)$ given by
$$\g{A}{B}:=-\trace(A\circ B)\;.$$
It satisfies for all $A,B,C\in\so(\osc_pf)$
\begin{equation}\label{eq:ad_is_skew}
\g{[A,B]}{C}=\g{A}{[B,C]}\;.
\end{equation}
In other words, $\ad(A)$ is skew-symmetric for each $A\in\so(\osc_pf)$\,.

\begin{example}\label{ex:Sigma}
If $\sigma^\bot$ denotes the linear reflection in $\bot^1_pf$ (which is an
orthogonal map)\,,
then $\Ad(\sigma^\bot):\so(\osc_pf)\to \so(\osc_pf)$  is an orthogonal
map, too.
\end{example}

\begin{definition}\label{de:hat_fetth}
In the situation of Proposition~\ref{p:results}, let $P:\so(\osc_pf)\to\hol(\osc f)$ denote the orthogonal projection onto
$\hol(\osc f)$ with respect to the metric introduced above.
\end{definition}

Furthermore, let $\frakc(\hol(\osc f))$ denote the centralizer of $\hol(\osc
f)$ in $\so(\osc_p f)$ (see~\eqref{eq:second_subspace}).

\begin{lemma}\label{le:lambda}
In the situation of Proposition~\ref{p:results}, the following is true:
\begin{enumerate}
\item The outer derivation of $\hol(\osc f)$ induced by
$\ad\big(\fetth(x)\big)$ for each $x\in T_pM$ is actually an inner derivation of $\hol(\osc f)$\,; more
precisely, $[\fetth(x),A]=[P(\fetth(x)),A]$ holds for all $A\in\hol(\osc
M)$\,, i.e.
\begin{equation}\label{eq:707}
\fetth(x)-P(\fetth(x))\in\frakc(\hol(\osc f))\;.
\end{equation}
\item  We have
\begin{equation}\label{eq:801}
P(\fetth(x))\in\hol(\osc f)_-\;.
\end{equation}
\item The linear map $\fetth-P\circ\fetth:T_pM\to\so(\osc_pf)$ is injective or
  identically equal to $0$\,.
\end{enumerate}
\end{lemma}

\begin{proof}
For~(a):~\eqref{eq:707} is seen as follows: We can write $\fetth(x)=P(\fetth(x))+\fetth(x)^\bot$ with
$\fetth(x)^\bot\in\hol(\osc f)^\bot$\,. For each $A\in\hol(\osc f)$ we have:
\begin{equation}\label{eq:fetth(x)_2}
\underbrace{[\fetth(x),A]}_{\in\hol(\osc f)}=\underbrace{[P(\fetth(x)),A]}_{\in\hol(\osc f)}+[\fetth(x)^\bot,A]\;,
\end{equation}
from which we see that $[\fetth(x)^\bot,A]\in\hol(\osc f)$\,. We claim that
$[\fetth(x)^\bot,A]=0$ (and therefore~\eqref{eq:fetth(x)_2}
yields~\eqref{eq:707}):

In fact, we have
\begin{equation*}
\g{B}{[\fetth(x)^\bot,A]}=-\g{B}{[A,\fetth(x)^\bot]}\stackrel{\eqref{eq:ad_is_skew}}
=-\g{\underbrace{[B,A]}_{\in\hol(\osc f)}}{\fetth(x)^\bot}=0
\end{equation*}
for each $B\in\hol(\osc f)$\,, which implies that $[\fetth(x)^\bot,A]=0$\,, since $\gg$ is non-degenerate.

For~(b):
From~\eqref{eq:Sigma(hol(osc_M))} and Example~\ref{ex:Sigma} we
conclude that
$$\Ad(\sigma^\bot)|_{\hol(\osc f)}\circ P=P\circ\Ad(\sigma^\bot)\;,$$ hence
$\Ad(\sigma^\bot)\,P(\fetth(x))=P(\Ad(\sigma^\bot)\,\fetth(x))=-P(\fetth(x))$\,,
since $\fetth(x)\in\so(\osc_pf)_-$ (see~\eqref{eq:odd},~\eqref{eq:fetth1}); in this way~\eqref{eq:801} has been proved.

For~(c):
We have
$$\Kern(\fetth-P\circ\fetth)=\Menge{x\in T_pM}{\fetth(x)\in\hol(\osc f)}\,,$$
and therefore, since $T_pM$ is an irreducible $\hol(M)$-module,
it suffices to show that r.h.s.\ of the last equation is invariant under the
natural action of $\hol(M)$ on $T_pM$\,:

Let $y\in T_pM$ with
$\fetth(y)\in\hol(\osc f)$ and $A\in\hol(M)$ be given.
Thereby, according to~\eqref{eq:hol(M)}, without loss of generality we can assume
that there exist $x_1,x_2\in T_pM$ with $A=R^M(x_1,x_2)$\,; then
$$
\fetth(R^M(x_1,x_2)\,y)\stackrel{\eqref{eq:Gauss_Ricci},\eqref{eq:semiparallel_2}}{=}
[R^N(x_1,x_2),\fetth(y)]
-[[\fetth(x_1),\fetth(x_2)],\fetth(y)]\;.
$$
The second term of the r.h.s.\ of the last line is contained in $\hol(\osc f)$\,,
in accordance with~\eqref{eq:outer_derivation}. For the first term of the r.h.s.\ above,
note that
$
R^N(x_1,x_2)|_{\osc_pf}\in\hol(\osc f)$ holds according to Part~(c) of
Proposition~\ref{p:results}\,; the result now follows.
\qed \end{proof}

\begin{proposition}\label{p:Prop1001}
Let a parallel isometric immersion $f$
from a simply connected irreducible symmetric space $M$ into the symmetric space $N$ be given.
If the totally geodesic submanifold $\bar M$
(see~\eqref{eq:tg}) is locally irreducible, then
$\dim\big(\frakc(\hol(\osc f))\cap\so(\osc f)_-\big)\leq 2$\,.
\end{proposition}

It is not clear whether this estimate is optimal; at least the condition $\frakc(\hol(\osc f))\cap\so(\osc f)_-=\{0\}$ is
not always given (even if $M$ is irreducible):

\begin{example}
Let $N:=\C\rmP^n$ denote the complex projective space and
$p:=(1:0:\cdots:0)$\,. Then $T_pN=\C^n$ and the holonomy Lie algebra of $N$ is given by
$\fraku(n)=\R\,J\oplus\su(n)$ (where $J$ denotes multiplication by $\sqrt{-1}$)\,.
Let $M\subset N$ be a full, Lagrangian symmetric submanifold through $p$ and put $f:=\iota^M$\,.
The corresponding examples are listed in~\cite[Table 9.2]{BCO}
(for instance, there is an isometric embedding of $\SU(n)/\SO(n)$
onto a full, Lagrangian symmetric submanifold of $\C\rmP^N$ with $N=\frac{1}{2}(n-1)(n+2)$).
Then $M$ is always irreducible, $\bar M$ is a real projective space and $f$ is necessarily 1-full (cf. Footnote~\ref{Fn:1-full}).
In this situation, $\hol(f^*TN)$ is given by $\su(n)$ (see~\cite[Theorem~1.9]{J1}). Furthermore, since $M$ is
Lagrangian, we can assume w.l.o.g.\ that $T_pM=\R^n$ in which case $J\in\frakc(\su(n))\cap\so(2n)_-$\,.
\end{example}

\paragraph{Proof of Theorem~\ref{th:fetth_in_hol}}
Let a parallel isometric immersion $f$
from a simply connected irreducible symmetric space $M$ into the symmetric space $N$ be given.
Then $(\fetth-P\circ\fetth)(T_pM)\subset\frakc(\hol(\osc f))\cap\so(\osc f)_-$\,, according to Lemma~\ref{le:lambda} (a) and (b)\,.
If additionally the totally geodesic submanifold $\bar M$
(see~\eqref{eq:tg}) is irreducible, then $\dim(\frakc(\hol(\osc f))\cap\so(\osc f)_-)\leq 2$
according to Proposition~\ref{p:Prop1001}; hence, in case also $\dim(M)\geq 3$ holds, the linear map $\fetth-P\circ\fetth$ can
not be injective; thus $\fetth-P\circ\fetth$ vanishes identically, consequently to Part~(c) of Lemma~\ref{le:lambda}.
This implies that $\fetth(x)=P(\fetth(x))\in\hol(\osc f)$ holds for each $x\in T_pM$\,.
\qed

\paragraph{It remains to give the proof of Proposition~\ref{p:Prop1001}.}
For this, we will need the following concepts from representation theory:
Let a Lie algebra $\frakh$\,, Euclidian vector spaces $W$ and $U$ and orthogonal representations $\rho_1:\frakh\to
\so(W)$ and $\rho_2:\frakh\to \so(U)$ be given.

\begin{definition}\label{de:Hom} Put
\begin{equation}\label{eq:Hom}
\Hom_{\frakh}(W,U):=\Menge{\lambda\in\Hom(W,U)}{\forall\;h\in
  \frakh:\lambda\circ \rho_1(h)=\rho_2(h)\circ\lambda}\;.
\end{equation}
\end{definition}

\begin{lemma}\label{le:Hom1}
In the above situation, suppose that $W$ is an irreducible
$\frakh$-module. Then:
\begin{enumerate}
\item Each $\lambda\in\Hom_\frakh(W,U)$	 is either an
injective map or identically equal to $0$\,; in case $\lambda\neq 0$ its image
$\lambda(W)$ is an irreducible $\frakh$-submodule of $U$ and
$\lambda^{-1}:\lambda(W)\to W$ is an $\frakh$-homomorphism, too.
\item If $\mu\in \Hom_\frakh(W,W)$ is self-adjoint, then there exists some
  $\kappa\in\R$ such that $\mu=\kappa\,\Id$\,.
\item
We have $\dim(U)\geq\dim(W)\cdot\dim(\Hom_\frakh(W,U))/d$\;.
\end{enumerate}
\end{lemma}
\begin{proof}
For Part~(a): This is usually known as ``Schur's Lemma''. Part~(b) follows from~(a)
(since $\mu$ has at least one real eigenvalue).
For Part~(c): Decompose $U$ into irreducible submodules and then use~(a).
\end{proof}

Now we consider the Lie algebra $\frakh$ defined by~\eqref{eq:frakh}
and its linear representations
\begin{align}\label{eq:rho_1}
&\rho_1:\frakh\to\so(T_pM),\;A\mapsto
A|_{T_pM}:T_pM\to T_pM\;,\\
\label{eq:rho_2}
&\rho_2:\frakh\to\so(\bot^1_pf),\;A\mapsto A|_{\bot^1_pf}:\bot^1_pf\to\bot^1_pf\;,
\end{align}
proposed by Part~(c) of Proposition~\ref{p:results}\,.

Let $\Hom_\frakh(T_pM,\bot^1_pf)$ and $\Hom_\frakh(T_pM,T_pM)$ be the linear spaces defined according to
Def.~\ref{de:Hom}. We define the integer
\begin{equation}\label{eq:d}
d:=\dim(\Hom_\frakh(T_pM,T_pM))\;.
\end{equation}

\begin{lemma}\label{le:My_lemma}
Let $f$ be a parallel isometric immersion from a simply connected, irreducible
symmetric space $M$ into $N$ such that the totally geodesic submanifold $\bar M$ defined by~\eqref{eq:tg} is locally irreducible.
\begin{enumerate}
\item $T_pM$ is an irreducible $\frakh$-module and
\begin{equation}\label{eq:hol(M_top)_1}
\hol(\bar M)=\rho_1(\frakh)\;.
\end{equation}
\item We have $d\leq 2$\,.
\item If $\dim(\bot^1_pf)>2$\,, then $\hol(\osc f)_-\neq \{0\}$\,.
\item The following map is injective,
\begin{equation}\label{eq:Hom_3}
\frakc(\hol(\osc f))\cap\so(\osc f)_-\hookrightarrow\Hom_\frakh(T_pM,\bot^1_pf), A\mapsto A|_{T_pM}\;.
\end{equation}
\end{enumerate}
\end{lemma}

\begin{proof}
Equation~\ref{eq:hol(M_top)_1} follows from Eqs.~\eqref{eq:hol(M_top)_2},~\eqref{eq:frakh}
and~\eqref{eq:rho_1}. Furthermore, $\hol(\bar M)$ acts irreducibly on $T_pM$ since $\bar M$ is locally irreducible (see Proposition~\ref{p:hol-relations})\,. Part~(a) follows.

For Part~(b): Let $\mu\in\Hom_\frakh(T_pM,T_pM)$ be given. We decompose
$\mu=\mu_1+\mu_2$ with $\mu_1$ self-adjoint and $\mu_2$ skew. Then $\mu_1$ and
$\mu_2$ both belong to $\Hom_\frakh(T_pM,T_pM)$\,; hence $\mu_1=\kappa_1\,\Id$ and
$\mu_2^2=-\kappa_2^2\,\Id$ holds for certain $\kappa_i\in\R$ by means of Schur's Lemma; thus, if $\mu_2\neq
0$\,, then $J:=1/\kappa_2\, \mu_2$ is a Hermitian structure of $T_pM$ which
commmutes with $\hol(\bar M)$ according to~\eqref{eq:hol(M_top)_1}, i.e. the universal covering space of
$\bar M$ is a Hermitian symmetric space and $\mu_2$ is a multiple of the Hermitian
structure of this symmetric space (recall that the Hermitian structure of an
irreducible Hermitian symmetric space is uniquely determined up to a sign according
to~\cite[Ch.~VIII, \S 7]{He}). Therefore, $\dim(\Hom_\frakh(T_pM,T_pM))\leq 2$\,.

For Part~(c):
By contradiction, assume that $\hol(\osc f)_-=\{0\}$\,. Using
Part~(a) of Proposition~\ref{p:results},~\eqref{eq:fetth1} and
the rules for $\Z_2$ graded Lie algebras, we conclude that
$$
\forall A\in\hol(\osc f)_+, x\in T_pM:[\fetth(x),A]=0\;.
$$
Let $A\in\frakh$ be given and put $A_1:=\rho_1(A)\in\so(T_pM)$ and $A_2:=\rho_2(A)\in\so(\bot^1_pf)$\,.
Consequently to Part~(c) of Proposition~\ref{p:results}, the endomorphism
$A|_{\osc_pf}=A_1\oplus A_2$
belongs to $\hol(\osc f)_+$\,. The previous implies that for all $x\in T_pM$
\begin{align*}
&[A,\fetth(x)]=0\,,\ \text{thus}\ \forall y\in T_pM:\,A_2\,\fetth(x)\,y=\fetth(x)\,A_1\,y\;.
\end{align*}
Hence, for all $x,y\in T_pM$
$$
h(x,A_1\,y)=A_2\,h(x,y)=A_2h(y,x)=h(y,A_1\,x)\;.
$$
Multiplication of the last equation with $\xi\in\bot_pf$ yields
$$
\g{x}{S_\xi A_1\,y}=\g{h(x,A_1\,y)}{\xi}=\g{h(y,A_1\,x)}{\xi}=\g{y}{S_\xi A_1\,x}\;.
$$
Since $A_1$ is skew-symmetric, whereas $S_\xi$ is symmetric, it follows that
$$
A_1\circ S_\xi=-S_\xi \circ A_1\;;
$$
and therefore
$$
\forall \xi,\eta\in\bot_pf: A_1\circ S_\xi \circ S_\eta=-S_\xi \circ A_1\circ S_\eta=S_\xi\circ S_\eta\circ A_1\;.
$$
We hence see: $S_\xi\circ S_\eta\in\Hom_\frakh(T_pM,T_pM)$ for all $\xi,\eta\in\bot_pf$\,.
Since $S_\xi^2$ is a self adjoint linear map for each $\xi\in\bot^1_pf$\,, the previous
implies that there exists some $\kappa\in\R$ with
$S_\xi^2=\kappa\cdot\Id$ (see Lemma~\ref{le:Hom1}~(b))\,; in
particular, $S_\xi$ is invertible for each $\xi\in\bot^1_pf$ unless $\xi=0$ and thus
$$
\bot^1_pf\to\Hom_{\frakh}(T_pM,T_pM),\;\eta\mapsto S_\xi\circ S_\eta
$$
is injective, which is implies that $\dim(\bot^1_pf)\leq d\leq 2$\,. This proves Part~(c).

For~(d):
Let $A\in\frakc(\hol(\osc f))\cap\so(\osc f)_-$ be given. Using Part~(c) of
Proposition~\ref{p:results}, it is straightforward to show that
$A|_{T_pM}:T_pM\to\bot^1_pf$ belongs to $\Hom_\frakh(T_pM,\bot^1_pf)$\,. Hence
the map described by~\eqref{eq:Hom_3} is well defined; injectivity follows
from Lemma~\ref{le:splitting}.
\qed \end{proof}

\paragraph{Proof of Proposition~\ref{p:Prop1001}}
Let a parallel isometric immersion $f$
from a simply connected irreducible symmetric space $M$ into the symmetric space $N$ be given such that
the totally geodesic submanifold $\bar M$ is irreducible. If $\dim(M)=1$\,, then the result is trivial (since then $\dim(\so(\osc_pf))=1$).
In the following, we assume that $\dim(M)\geq 2$\,. By contradiction, assume that there exist
three linearly independent elements
$A_1,A_2,A_3\in\frakc(\hol(\osc f))\cap\so(\osc f)_-$\,.
Then $\lambda_1:=A_1|_{T_pM},\lambda_2:=A_2|_{T_pM},\lambda_3:=A_3|_{T_pM}\in\Hom(T_pM,\bot^1_pf)$
are linearly independent elements of $\Hom_\frakh(T_pM,\bot^1_pf)$\,, consequently to Lemma~\ref{le:My_lemma}~(d)\,.
Put $U_j:=\lambda_j(T_pM)$\,, then $\lambda_j$ is an isomorphism onto $U_j$
according to Lemma~\ref{le:Hom1}. It is not possible that $U_1=U_2=U_3$\,, since otherwise
$\Id=\lambda_1^{-1}\circ\lambda_1$\,, $\lambda_1^{-1}\circ \lambda_2$\,,
$\lambda_1^{-1}\circ \lambda_3$ were three linearly independent elements of
the vector space $\Hom_\frakh(T_pM,T_pM)$\,, which is not possible because of Lemma~\ref{le:My_lemma}~(b).
Therefore, without loss of generality we may assume that $U_1\neq U_2$\,; then even $U_1\cap U_2=\{0\}$\,,
since $U_1$ and $U_2$ are irreducible $\frakh$-modules; in particular,
$\dim(\bot^1_pf)\geq 2\,\dim(M)\geq 4$\,.
We claim that this already implies that $\hol(\osc f)_-=\{0\}$\,:

By the above, the linear maps
$$
\lambda_j=A_j|_{T_pM}:T_pM\;\to\;U_j\qmq{and }\lambda_j^*=-A_j|_{U_j}:U_j\;\to\;T_pM
$$
are linear isomorphisms for $j=1,2$\,; therefore,
for every $x\in	 T_pM$ and $j=1,2$ there exists $\xi_j\in U_j$ with
$A_j(\xi_j)=x$\,. Furthermore, given $A\in \hol(\osc f)_-$\,, we have
$[A_1,A]=[A_2,A]=0$\,, according to~\eqref{eq:707}; thus
$$
Ax=A(A_j\xi_j)=A_j(A\,\xi_j)\in U_j \qmq{for $j=1,2$\,,}
$$
and therefore $A\,x\in U_1\cap U_2=\{0\}$\,. We obtain $A|_{T_pM}=0$ and because
of Lemma~\ref{le:splitting} even $A=0$\,, i.e. $\hol(\osc f)_-=\{0\}$\,.

Therefore, $\dim(\bot^1_pf)\leq 2$\,,
according to Lemma~\ref{le:My_lemma}~(c)\,. On the other hand,
we have already seen that $\dim(\bot^1_pf)\geq 4$\,, a contradiction.
\qed

\subsection{Proof of Theorem~\ref{th:MainResult}}
\label{se:proof}
Suppose that $N$ is of compact or non-compact type and
let a parallel isometric immersion $f:M\to N$ from a simply
connected, irreducible symmetric space $M$ of dimension at least three be
given.
% Furthermore, let $\hol(\osc f)$ denote the holonomy Lie algebra from
% Def.~\ref{de:HolonomyLieAlgebra} and let $\fetth:T_pM\to\so(T_oN)$ be the linear map from~\eqref{eq:fetth1}.

Besides Assertions $(a)$\,, $(b)$ and $(c)$ from
Theorem~\ref{th:MainResult}, we also introduce the following two assertions, $(d)$ and $(e)$\,,
as follows:

\begin{enumerate}\addtocounter{enumi}{3}
\item Equation~\ref{eq:fetth_in_hol} holds.
\item The totally geodesic submanifold $\bar M$ (see~\eqref{eq:tg}) is locally irreducible.
\end{enumerate}
We will now prove the chain of implications ``$(e)\Rightarrow (d)\Rightarrow (c)\Rightarrow (b)\Rightarrow
(a)\Rightarrow (e)$''; which, in particular, gives the proof of Theorem~\ref{th:MainResult}.

\paragraph{For ``$(a)\Rightarrow (e)$'':}
By contradiction, assume that $\bar M$ is not locally irreducible. Then $\bar M$ is a
flat, according to Proposition~\ref{p:hol-relations}, hence $f(M)$ is
contained in some flat according to Theorem~\ref{th:4}.
\qed
\paragraph{For ``$(e)\Rightarrow (d)$'':}
See Theorem~\ref{th:fetth_in_hol}.
\paragraph{For ``$(d)\Rightarrow (c)$'':}
This direction is an immediate consequence of Theorem~\ref{th:homogen}
combined with the following lemma:

\begin{lemma}\label{le:frakk}
Let a parallel isometric immersion $f$ into the symmetric space $N$ be
given. Let $\hol(N)$\,, $\hol(f^*TN)$  and $\hol(\osc f)$ denote the holonomy Lie
algebras of $TN$\,, $TN|_M$ and $\osc f$ with respect to $\nabla^N$
and the base point $p$\,, respectively. Furthermore, let
$\rho_*:\frakk\to\so(T_pN)$ denote the linearized isotropy representation (see~\eqref{eq:rho_star}).
\begin{enumerate}
\item
There is a sequence of inclusions of Lie algebras,
\begin{equation}\label{eq:hol(N)_2}
\hol(f^*TN)\subset \hol(N)\subset\rho_*(\frakk)\;.
\end{equation}
\item
For each $A\in \hol(\osc f)$ there exists some $\tilde A\in\hol(TN|_M)$
with
\begin{equation}\label{eq:killing_1}
\tilde A(\osc_pf)\subset\osc_pf\qmq{and}\tilde A|_{\osc_pf}=A\;.
\end{equation}
\end{enumerate}
Hence~\eqref{eq:fetth_in_hol} implies Assertion~(b) of Theorem~\ref{th:homogen}\,.
\end{lemma}

\begin{proof}
For~(a): The first inclusion in~\eqref{eq:hol(N)_2} is trivial.
The second one can be seen as follows:

Given $A\in\hol(N)$\,, by $g_t:=\exp(t\,A)$ is defined a one-parameter subgroup of the
Holonomy group $\Hol(N)\subset\SO(T_{f(p)}N)$\,.
Thus we have $g_t=\ghdisp{0}{1}{c_t}{N}$ for some loop $c_t:[0,1]\to N$ with
$c_t(0)=p$ for each $t\in \R$\,.
Since the curvature tensor of $N$ is parallel, we have
\begin{equation}\label{eq:derive_2}
\forall u,v\in T_{f(p)}N, t\in\R:g_t\circ R^N(u,v)\circ g_t^{-1}=R^N(g_t\,u,g_t\,v)\,.
\end{equation}
Thus there exists an isometry $G_t$ of $N$ with $G_t(p)=p$ and $T_pG_t=g_t$\,, as a consequence of the ``Theorem of Cartan/Ambrose/Hicks''. The result follows.

For~(b): Remember that $\osc f\subset f^*TN$ is a $\nabla^N$-parallel vector
subbundle, according to Proposition~\ref{p:properties}~(f)\,. Therefore,
using an argument on the level of the corresponding Holonomy groups, we
conclude that for each $A\in\hol(f^*TN)$ we have $A(\osc_pf)\subset \osc_pf$\,,
$A|_{\osc_pf}\in\hol(\osc f)$ and the canonical map $\hol(f^*TN)\to\hol(\osc
f),\;A\mapsto A|_{\osc_pf}$ is surjective. Furthermore, $\hol(f^*TN)\subset\hol(N)$ is a Lie subalgebra. The result
now follows from~(a).
\qed \end{proof}

\paragraph{For ``$(c)\Rightarrow (b)$'':}
This direction is trivial.
\paragraph{For ``$(b)\Rightarrow (a)$'':}
If $f(M)$ is a homogeneous submanifold, then $f:M\to f(M)$ is a Riemannian
covering (cf.\ the proof of Theorem~\ref{th:homogen}, $(a)\Rightarrow (b)$). Hence $(b)\Rightarrow (a)$	 is an
immediate consequence of the following result:

\begin{proposition}\label{p:flat}
If $N$ is a symmetric space of compact or non-compact type and $M\subset N$
is a homogeneous submanifold which is contained
in some flat of $N$\,, then $M$ is a flat, too. In particular, then the
universal covering space of $M$ is a Euclidian space.
\end{proposition}

\begin{proof}
Let $\tilde M$ be a flat of $N$ with $p\in\tilde M$\,. Let $\Gamma:\frakp\to
T_pN$ be the inverse of the canonical isomorphism $\frakp\to T_pN, X\mapsto X^*_p$\,.
Since $\tilde M$ is a totally
geodesic submanifold of $N$\,, it is well known that then
$\frakm:=\Gamma(T_p\tilde M)$ is a ``Lie triple system'', i.e. $[\frakm,[\frakm,\frakm]]\subset\frakm$\,;
(cf.~\cite[Ch.~IV,~\S~7]{He}). Set
$$\fraki(\tilde M,N):=\Menge{X\in\fraki(N)}{X^*(\tilde M)\subset T\tilde M}\;.$$
We claim that $\fraki(\tilde M,N)=[\frakm,\frakm]\oplus\frakm$ holds:

We have $\frakm\subset\fraki(\tilde M,N)$\,, since $\tilde M$ is totally
geodesic; hence $[\frakm,\frakm]\oplus\frakm\subset\fraki(\tilde M,N)$
since $\fraki(\tilde M,N)$ is a Lie algebra (in fact, the latter is the Lie algebra
of the group of isometries of $N$ which leave $\tilde M$ invariant).
In the other direction, let $[\frakm,\frakm]^\bot$ denote the orthogonal complement of
$[\frakm,\frakm]\oplus\frakm$ in $\fraki(\tilde M,N)$ with respect to the
Killing form $B$ of $\fraki(N)$\,. It suffices to show that $[\frakm,\frakm]^\bot=\{0\}$\,.
To this end, one knows that $\Gamma$ is an equivariant map
of $\frakk$-modules, i.e.
\begin{equation}\label{eq:correspondence}
\forall X\in\frakk,y\in T_pN:\ad(X)\,\Gamma(y)=\Gamma(\rho_*(X)\,y)\;.
\end{equation}
Furthermore, we have $\rho_*(X)\, T_p\tilde M\subset T_p\tilde M$ for every $X\in \fraki(\tilde M,N)$\,;
thus $\ad(\fraki(\tilde M,N))\,\frakm\subset\frakm$ by means of~\eqref{eq:correspondence}\,.
Furthermore, $0=B(X,[Y,Z])=B([X,Y],Z)$ for all $Y,Z\in\frakm$ and $X\in [\frakm,\frakm]^\bot$\,; since $B|_{\frakm\times\frakm}$ is negative or
positive definite, we now conclude from the previous that $\ad(X)\,\frakm=\{0\}$ holds for all
$X\in[\frakm,\frakm]^\bot$\,, i.e. (see~\eqref{eq:correspondence}) $\rho_*(X)=0$\,.
Therefore,	$[\frakm,\frakm]^\bot=\{0\}$ because $\rho_*$ is faithful.

Moreover, since $\tilde M$ is a
flat of $N$\,, we have $[\frakm,\frakm]=\{0\}$ (cf.~\cite[Ch.~V,~Proposition~6.1]{He}), hence $\fraki(\tilde M,N)=\frakm$\,.
Suppose now that $M$ is a homogeneous submanifold of $N$ which passes through
$p$\,, say $M=G\,p$ for some subgroup
$G\subset\Iso(N)^0$\,, and that there exists a flat $\tilde M$ of $N$ with $M\subset
\tilde M$\,. Since the connected components of
the intersection of any two flats of $N$ are flats of $N$\,, too, without loss of generality we may assume that $\tilde M$ is
the connected component of the intersection of all the flats of $N$ which contain $M$\,.
Then $g(\tilde M)=\tilde M$ for each $g\in G$\,, and thus
$\frakg\subset\fraki(\tilde M,N)$ (where $\frakg$ denotes the Lie algebra of $G$)\,.
Hence, by the previous, $\frakg\subset\frakm\subset\frakp$\,, thus $M=\exp^N(T_pM)$\,; the result follows.
\qed \end{proof}

\section{2-symmetric submanifolds}
\label{se:two-symmetric}
\begin{definition}[{see~\cite[Ch.~7.2]{BCO}}]\label{de:2-symmetric}
A submanifold $M\subset N$ will be called 2-symmetric, if $M$ is a symmetric
space (whose geodesic symmetries are denoted by $\sigma^M_p$)
and for every $p\in M$ there exists an isometric involution $I_p$ of $N$ such that
\begin{equation}
I_p(M)=M\qmq{and} I_p|_M=\sigma^M_p
\end{equation}
\end{definition}

\begin{example}
Every symmetric submanifold of $N$ is a 2-symmetric parallel submanifold.
\end{example}

\begin{definition}\label{de:inner_type}
Let $M$ be a symmetric space. We say that $M$ is of inner
  type if $\Iso(M)^0$ contains the geodesic symmetries of $M$\,.
\end{definition}
For a complete list of the irreducible symmetric spaces which are of inner type see Proposition~\ref{p:2-symmetric}.

\begin{theorem}\label{th:two-symmetric}
Suppose that $N$ is of compact or non-compact type and let a full parallel isometric immersion $f:M\to N$ from a
simply connected, irreducible symmetric space $M$ with $\dim(M)\geq 3$ be given.
If $M$ is additionally of inner type, then $\tilde M:=f(M)$ is a 2-symmetric
submanifold.
\end{theorem}
\begin{proof}
Since $f:M\to N$ is a full parallel isometric
immersion, $\tilde M:=f(M)$ is, in particular, not contained in any flat of $N$\,.
Thus, according to Theorem~\ref{th:MainResult}, $\tilde M$ is a submanifold
with extrinsically homogeneous	holonomy bundle and $f:M\to
\tilde M$ is a Riemannian covering.
Let $G$ be a subgroup of $\Iso(N)$ which has the properties described in
Definition~\ref{de:homogeneous_holonomy}. We claim that already $\tilde M$ is a
symmetric space of inner type:

For every smooth geodesic line
$\gamma:\R\to M$ let $\Theta_\gamma(t)$ denote the 1-parameter subgroup of
transvections along $\gamma$ (see~\eqref{eq:Theta}
and~\eqref{eq:Tr}). Let $q\in M$\,, set $p:=f(q)$\ and let $\sigma^M_q$
denote the geodesic symmetry at $q$\,; then $\sigma^M_q\in\Iso(M)^0$ since $M$
is of inner type.  Thus, using Proposition~\ref{p:isometrygroup},
we see that there exist certain smooth geodesic lines $\gamma_i:[0,1]\to M$ ($i=1,\ldots,n$) such that
\begin{equation}\label{eq:222}
\sigma^M_q=\Theta_{\gamma_1}(1)\circ\cdots\circ\Theta_{\gamma_n}(1)\;.
\end{equation}
Furthermore, $f\circ\gamma_i$ is a curve into $\tilde M$; hence, in
accordance with Definition~\ref{de:homogeneous_holonomy}, for each $i=1,\ldots,n$ there
exists $g_i\in G$ with $g_i(\tilde M)=\tilde M$\,, $g_i(f(\gamma_i(0)))=f(\gamma_i(1))$ and
\begin{equation}\label{eq:333}
Tg_i|_{T_{f(\gamma_i(0))}\tilde M}=\ghdisp{0}{1}{f\circ \gamma_i}{\tilde M}\;.
\end{equation}
Then, since $f:M\to \tilde M$ is  a Riemannian
covering,~\eqref{eq:Tr},~\eqref{eq:222} and~\eqref{eq:333} imply that
$g_i\circ f=f\circ \Theta_{\gamma_i}$ for all ($i=1,\ldots,n$).
Put $g:=g_1\circ\cdots\circ g_n$\,; then $T_pg|_{T_p\tilde M}=-\Id$
holds, i.e. $g|_{\tilde M}$ is the geodesic symmetry
$\sigma^{\tilde M}_p$\,. Thus $\tilde M$ is a symmetric space of inner type (since $G$ is connected).

Furthermore, we have just seen that for each $p\in \tilde M$ there
exists some involution $g\in G$ with $g|_{\tilde
M}=\sigma^{\tilde M}_p$\,. But $g^2|_{\tilde M}=\Id$ implies that $g^2=\Id$
according to Lemma~\ref{le:unique}~(c) (since $\tilde M$ is a full submanifold), i.e. $\tilde M$ is 2-symmetric. The result follows.
\qed
\end{proof}

\section{Acknowledgement}
I want to thank the members of the Mathematical Institute of Cologne for their support.
A special heartly thanks to my teacher, Professor Helmut~Reckziegel, for his helpful
suggestions. I also want to thank Professor Gudlaugur~Thorbergsson, who advised me to some useful literature.

\section*{Appendix}
\begin{appendix}
\section{The canonical connection}
\label{se:A}
Let $M$ be an arbitrary Riemannian manifold, $G$ be a connected Lie group and
$G\times M\to M$ be a transitive isometric action, i.e. $M$ is a Riemannian homogeneous $G$-space.
For an arbitrary point $p\in M$ let $H\subset G$ be the corresponding isotropy
subgroup; then $M\cong G/H$\,. Let $\frakg$	 and $\frakh$ denote
the Lie algebras of $G$ and $H$\,, respectively. Recall that a {\em reductive
  complement}\/ of $\frakh$ is a subspace $\frakm\subset\frakg$
such that $\frakg=\frakh\oplus\frakm$ and $\Ad(h)(\frakm)\subset\frakm$ for all $h\in H$\;,
where $\Ad$ denotes the adjoint representation of $G$
(cf.~\cite[A.~3]{BCO}). Then $\frakh\oplus\frakm$ is also called a {\em reductive
decomposition}\/ of $\frakg$\,.

\begin{definition}\label{de:homogeneous}
Let $M\cong G/H$ be a Riemannian homogeneous $G$-space.
A vector bundle $\bbE\to M$ is called a
\emph{homogeneous vector bundle} if there exists an action
$\alpha:G\times\bbE\to\bbE$ through vector bundle isomorphisms such that the
bundle projection of $\bbE$ is equivariant.
\end{definition}

If $\bbE$ is a homogeneous vector bundle over $M$ and $\frakm$ is a
reductive complement, then there exists a distinguished linear connection $\nabla^\bbE$ on $\bbE$\,, called the
\emph{canonical connection}\,. In the framework of~\cite{KN}, it can be obtained as follows:
\begin{equation}\label{eq:tau}
\tau:G\to M\;, g\mapsto g(p)\;,
\end{equation}
is a principal fiber bundle,
\begin{equation}\label{eq:scrH}
\scrH_g:=\Menge{X_g}{X\in\frakm}
\end{equation}
defines a $G$-invariant connection $\scrH$ on this principle bundle
(where the elements of $\frakm$ are also considered as left-invariant
vector fields on $G$\,, see~\cite[Vol.~1, p.~239]{KN}).
Since $\bbE$ is a vector bundle associated with $\tau$ via
\begin{equation}\label{eq:assoziierung}
G\times \bbE_p\to\bbE\;,(g,v)\mapsto\alpha(g,v)\;,
\end{equation}
the connection $\scrH$ induces a linear connection $\nabla^{c}$ on $\bbE$\,,
see~\cite[Vol.~1, p.~87]{KN}.
One can show that $\nabla^c$ does not depend on the special choice of the base
point $p$\,; therefore it is called the canonical connection.
In order to relate	the parallel displacement in $\bbE$ induced by
$\nabla^c$ to the horizontal structure $\scrH$\,, let a curve
$c:\R\to M$ with $c(0)=p$ be given; then
\begin{equation}\label{eq:pardisp}
\forall v\in \bbE_p:\ghdisp{0}{1}{c}{\nabla^c}\,v=\alpha(\hat c(1),v)\;,
\end{equation}
where $\hat c:[0,1]\to G$ denotes the $\scrH$-lift of $c$ with $\hat c(0)=\Id$\,.

\begin{example}\label{ex:geodesics}
Let $M\cong G/H$ be a Riemannian homogeneous $G$-space and $\frakg=\frakh\oplus\frakm$ be a reductive decomposition.
\begin{enumerate}
\item  For each $X\in\frakm$ the
  1-parameter subgroup $:\R\to G,\,t\mapsto\exp(t\,X)$ is the integral curve of $X$ and
  hence, in accordance with~\eqref{eq:scrH}, this is the horizontal lift
  of the curve $c$ defined by $c(t):=\exp(t\,X)(p)$\,.
Therefore, in view of~\eqref{eq:pardisp}, if $\bbE\to M$ is a homogeneous
vector bundle, then we have for all $X\in\frakm, v\in\bbE_p$\,:
\begin{equation}\label{eq:frakm333}
t\mapsto   \alpha(\exp(t\,X),v)\ \ \text{is a $\nabla^c$-parallel section of
  $\bbE$ along $c$}\;.
\end{equation}
\item
The induced action $\alpha^M:G\times TM\to TM$ equips $\bbE:=TM$ with the structure of a
homogeneous vector bundle over $M$\,; let $\nabla^c$ denote the corresponding canonical connection on $TM$\,.
In accordance with~\cite[Ch.~X, Corollary~2.5]{KN}, the $\nabla^c$-geodesics
$\gamma:\R\to M$
with $\gamma(0)=p$ are given by
\begin{equation}\label{eq:pardisp2}
\gamma(t)=\exp(t\,X)(p)\ \ \text{($X\in\frakm$)\,.}
\end{equation}
\end{enumerate}
\end{example}

\begin{proposition}[{see~\cite[Ch.~X, Theorem~2.8]{KN}}]\label{p:KN}
Let $M$ be a Riemannian manifold and $\nabla^M$ be the Levi Civita connection.
Suppose that there exists a Lie group $G$ and some $p\in M$ such that
\begin{itemize}
\item $G$ acts effectively on $M$ through $\nabla^{M}$-parallel vector bundle
isomorphisms,
\item and for every curve $c:[0,1]\to M$ with $c(0)=p$ there exists some $g\in G$ with
\begin{equation}
\forall y\in T_pM:\ghdisp{0}{1}{c}{M}\,y=\alpha(g,y)\;.
\end{equation}
\end{itemize}
Then $M\cong G/H$ is a Riemannian homogeneous $G$-space and there exists a unique reductive decomposition $\frakg=\frakh\oplus\frakm$
such that $\nabla^{M}$ is the corresponding canonical connection on $TM$ (as described in Example~\ref{ex:geodesics})\,.
\end{proposition}

Now suppose that $M$ is a symmetric space. Clearly, $\Iso(M)^0\times M\to M$
is a transitive action and the Cartan decomposition $\fraki(M)=\frakk\oplus\frakp$ with respect to
our base point $p$ is a reductive decomposition.

\begin{proposition}[{see~\cite[Ch.~XI]{KN}}]
\label{p:B}
For every symmetric space $M$\,, the tangent bundle is a homogeneous vector bundle
and the Levi Civita connection is the canonical connection induced by the Cartan decomposition.
\end{proposition}

Returning to the extrinsic situation, let $N$ be a symmetric space, $p\in N$\,, and $G\subset \Iso(N)^0$ be a
connected subgroup. Then the orbit $M:=G\,p$ is a homogeneous
$G$-space and the pullback bundle $TN|_M$ is a homogeneous vector bundle
over $M$ via the induced action $\alpha:G\times TN|_M\to TN|_M$ (see
Def.~\ref{de:homogeneous}). Suppose that there exists a reductive complement $\frakm\subset\frakg$
and let $\nabla^c$ be the corresponding canonical connection on $TN|_M$\,. The direct sum
$\nabla^M\oplus\nabla^\bot$ defines a second connection on $TN|_M=TM\oplus\bot
M$\,; we set $\Delta:=\nabla^M\oplus\nabla^\bot-\nabla^c\in\Hom(TM,\End(TN|_M))$\,.
Similar as in~\cite[Lemma~2.2]{Co}, in this situation we have:

\begin{lemma}\label{le:parallel}
Let $N$ be a symmetric space, $p\in N$ and $G\subset \Iso(N)^0$ be a
connected subgroup. Consider the orbit $M:=G\,p$\,.
\begin{enumerate}
\item
$TM$\,, $\bot M$\,, the first normal bundle $\bot^1M$ and the second osculating bundle $\osc
M=TM\oplus\bot^1M$ are $\nabla^c$-parallel vector subbundles of $TN|_M$\,.
\item
Both $\Delta$ and $h$ are parallel sections of
$\Hom(TM,\End(TN|_M))$ and $\Sym^2(TM,\bot M)$ with respect
to $\nabla^c$\,.
\end{enumerate}
\end{lemma}
\begin{proof}
For~(a): We have
$Tg(TM)\subset TM$\,, $Tg(\bot M)\subset \bot M$ and
$h\big(Tg(x),Tg(y)\big)=Tg\big(h(x,y)\big)$ (since $G$ is a subgroup of
$\Iso(N)$ with $g(M)=M$ for each $g\in G$), hence the vector bundles listed in~(a) are
invariant under the action of $G$\,.
Thus it suffices to show that every $G$-invariant vector subbundle
$\bbF\subset TN|_M$ is parallel with respect to
$\nabla^c$ and that the corresponding connection on $\bbF$ (obtained by
restriction) is the canonical connection induced by the action $\alpha|_{G\times \bbF}:G\times\bbF\to\bbF$\,:

Let $c:[0,1]\to M$ be a curve with $c(0)=p$\,, set $q:=c(1)$ and let
$v\in \bbF_p$ be given. Then
there exists some $g\in G$ with $g(p)=q$\,. Let $\hat
c:[0,1]\to G$ be the $\scrH$-lift of $c$ with $\hat c(0)=g$ (see~\eqref{eq:scrH}),
then according to~\eqref{eq:pardisp}

$$\ghdisp{0}{1}{c}{\nabla^c}\, v=\alpha(\hat c(1),v)\in\bbF_q\;,$$
because of the $G$-invariance of $\bbF$\,; hence $\bbF$ is parallel along
$c$\,. The result now follows, since the canonical connection is independent of
the base point $p$\,.

For~(b): Using similar arguments
as before, $G$-invariance of $h$ and $\Delta$ implies $\nabla^c$-parallelity,
respectively. Hence it suffices to show that $h$ and $\Delta$ both are $G$-invariant:

Let us first verify the statement for $h$\,. Because $G$ is a
subgroup of $\Iso(N)$\,, we have for each $g\in G$\,:
$$
\forall x,y\in T_pM:h(T_pg\,x,T_pg\,y)=T_pg\,h^f(x,y)\;.
$$
This implies that $h$ is $G$-invariant.

To see that also $\Delta$ is $G$-invariant, note that $G$ acts through
vector bundle isomorphisms on $TN|_M$ which are parallel with respect to both $\nabla^c$ (by construction of
the canonical connection, see~\eqref{eq:pardisp})
and $\nabla^M\oplus\nabla^\bot$ (because $G$ is a subgroup of
the isometries of $N$). Being the difference of two $G$-invariant linear
connections, $\Delta$ is $G$-invariant, too.
\qed \end{proof}

\section{An alternate description of the Lie bracket}
\label{se:B}
Let $M$ be an arbitrary Riemannian manifold, $\Iso(M)$ be the corresponding Lie group of isometries and
$\fraki(M)$ be its Lie algebra. Let $K$ denote the isotropy subgroup in
$\Iso(M)^0$ with respect to some base point $p$\,, $\rho:K\to\SO(T_pN)$
be the isotropy representation and $\rho_*:\frakk\to\so(T_oM)$ be the linearized isotropy
representation (see~\eqref{eq:rho_star}). Our first aim is to extend $\rho_*$
to a map from $\fraki(M)$ in a natural way. To this end, for each
$X\in\fraki(M)$ we consider the one-parameter subgroup of $\Iso(M)$ which is
given by $\psi_t^X:=\exp(t\,X)$ and we introduce
the corresponding ``fundamental vector
field'' $X^*$ on $M$ (in the sense of~\cite{KN}) defined by
\begin{equation}\label{eq:fundamental_vector_field}
\forall p\in N:X^*(p):=\frac{\diff}{\diff t}\Big|_{t=0}\psi_t^X(p)\;;
\end{equation}
then $\psi_t^X$ ($t\in\R$) is the flow of $X^*$\,. Consider the covariant
derivative $\nabla^MX^*$ for each $X\in\fraki(M)$\,, which is a skew-symmetric
tensor field of type $(1,1)$ on $M$\,; then we introduce
\begin{align}
\label{eq:pi_1}
&\pi_1:\fraki(M)\to\so(T_pM)\,,\;X\mapsto\nabla^MX^*(p)\;,\\
&\pi_2:\fraki(M)\to T_pM\,,\; X\mapsto X^*(p)\;.
\label{eq:pi_2}
\end{align}

\begin{proposition}[{\cite[Vol.~1, p.~245]{KN}}]\label{p:Kovv_X}
Given a Riemannian manifold $M$\,, let $\fraki(M)$ and $\frakk$ be defined as above. We have
\begin{equation}\label{eq:Kovv_X}
\forall X\in\fraki(M),u\in T_pM:\nabla^MX^*\,u=\frac{\nabla^M}{dt}\Big|_{t=0}T_p\psi_t^X\,u\;.
\end{equation}
In particular, $\pi_1|_\frakk=\rho_*$ holds.
\end{proposition}

By means of Eqs.~\eqref{eq:pi_1} and~\eqref{eq:pi_2}, we define a ``bracket'' on the linear space
$\rho_*(\frakk)\oplus T_pM$\,,
\begin{align}\label{eq:bracket_1}
&\forall x,y\in T_pM:[x,y]:=-R^M(x,y)\;,\\
\label{eq:bracket_2}
&\forall A\in\rho_*(\frakk),x\in T_pM:[A,x]:=-[x,A]:=A\,x\;,\\
\label{eq:bracket_3}
&\forall A,B\in\rho_*(\frakk):[A,B]:=A\circ B-B\circ A\;.
\end{align}

\begin{lemma}\label{le:bracket}
Let $M$ be a Riemannian manifold and let $\fraki(M)$\,, $\frakk$ and $\rho_*$ be defined as before.
Equations~\ref{eq:bracket_1} -~\ref{eq:bracket_3} equip $\rho_*(\frakk)\oplus T_pM$ with
the structure of a Lie algebra such that the linear map
\begin{equation}\label{eq:iota}
\iota:\fraki(M)\to \rho_*(\frakk)\oplus T_pM\,,\; X\mapsto(\pi_1(X),\pi_2(X))
\end{equation}
becomes an isomorphism of $\Z_2$-graded Lie algebras. In particular, the following two equations
hold:
\begin{align}
\label{eq:frakk_1}
&\frakk=\Kern(\pi_2)\,,\\
&\frakp=\Kern(\pi_1)\;.
\label{eq:frakp_1}
\end{align}
\end{lemma}
\begin{proof}
Equation~\ref{eq:frakk_1} is straightforward.
For~\eqref{eq:frakp_1}: (see also~\cite[Theorem~2.2.20]{Klg})
L.h.s.\ is contained in r.h.s.\ as a consequence of
Proposition~\ref{p:B} combined with Example~\ref{ex:geodesics} and~\eqref{eq:Kovv_X}; equality now follows the faithfulness of
$\rho_*$\,. The Injectivity of $\iota$ now follows;
surjectivity of $\iota$ is an immediate consequence of Proposition~\ref{p:Kovv_X}.
For the bracket relations cf.~\cite[Ch.~XI]{KN}.
\qed \end{proof}

\section{On the isometry group of a symmetric space}
\label{se:C}
Let $M$ be a symmetric space whose geodesic symmetries at the various points
$p\in M$ are denoted by $\sigma_p$ and whose Cartan decomposition is given
by $\fraki(M)=\frakk\oplus\frakp$\,.

For every smooth geodesic line $\gamma$ of $M$ with $\gamma(0)=p$ we have the family of
{\em transvections}\/ along $\gamma$\,,
given by
\begin{equation}
\forall t\in\R:\Theta_\gamma(t):=\sigma_{\gamma(t/2)}\circ\sigma_{\gamma(p)}\;.
	\label{eq:Theta}
\end{equation}
According to~\cite[Lemma~2.2.5]{Klg}, we have
\begin{align}\label{eq:Tr}
&\Theta_\gamma(t)(\gamma(p))=\gamma(t)\qmq{and}
T_p\Theta_\gamma(t)=\ghdisp{0}{t}{\gamma}{M}\;;
\end{align}
in particular, $\Theta_\gamma(t)$ is a differentiable one-parameter subgroup of $\Iso(M)$\,.
Let $\Tr(M)$ denote the subgroup of $\Iso(M)$ which is generated by the transvections.

\begin{proposition}\label{p:isometrygroup}
Let $M$ be a locally irreducible irreducible symmetric space. Then
$\Iso(M)^0=\Tr(M)$ holds.
\end{proposition}

\begin{proof}
As a consequence of~\eqref{eq:Tr}, every element of $G:=\Tr(M)$ can be
joined with $\Id$ by a $C^\infty$-path in $\Iso(M)$\,; thus it follows from a
result of Freudenthal (see~\cite[Vol.~1, p.~275]{KN}) that $G$ is already a connected Lie subgroup of $\Iso(M)$\,.
Let $\frakg$ denote the Lie algebra of $G$\,; we claim that
$\frakp\subset\frakg$ holds: Since $\nabla^M$ is the canonical connection of
$TM$ according to Proposition~\ref{p:B}, 
for each $X\in\frakp$ the curve $\gamma(t):=\exp(t\,X)(p)$ is a geodesic of $M$ and
$T_p\exp(t\,X)\,y$ is a parallel section of $TM$ along $\gamma$ for each $y\in
T_pM$ (see Example~\ref{ex:geodesics}).
Thus $\exp(t\,X)=\Theta_\gamma(t)$ for all $t\in\R$\,, as a consequence
of~\eqref{eq:Tr}\,, i.e. $\frakp\subset\frakg$\,. Furthermore, we claim that $\frakk=[\frakp,\frakp]$ holds:

In fact, by the local irreducibility of $M$\,, the Killing form $B$ of $\fraki(M)$ is
positive or negative; hence $B([X,Y],Z)=B(X,[Y,Z])=0$
for some $Z\in\frakk$ and all $X,Y\in\frakp$ if and only if $Z=0$ (cf.\ the proof of Proposition~\ref{p:flat}).

Therefore, we even have $\frakg=\fraki(M)$\,.
\qed\end{proof}

\begin{proposition}\label{p:2-symmetric}
$M$ is of inner type (see Definition~\ref{de:inner_type}) if and only if $M$ is isometric to
\begin{itemize}
\item the compact irreducible Hermitian symmetric spaces, i.e.\\
  $\SO(n+2)/\SO(2)\times \SO(n)$ ($n\geq 1$), $\SU(m+n)/\rmS(\rmU(m)\times\rmU(n))$
  ($m,n\geq 1$)\,, $\SO(2n)/\rmU(n)$\,,
  $\Sp(n)/\rmU(n)$ ($n\geq 1$)\,, $\rmE_6/\rmT\cdot\mathrm{Spin}(10)$\,, $\rmE_7/\rmT\cdot\rmE_6$\,,
\item$\SO(m+n)/\SO(m)\times \SO(n)$ ($1\leq m,n$ where $m$ or $n$ is even).
\item $\Sp(m+n)/\Sp(m)\times\Sp(n))$ ($m,n\geq 1$)\,,
\item the following exceptional symmetric spaces of compact type,
$\rmE_8/\SO(16)$\,, $\rmE_8/\rmE_7\times\SU(2)$\,,
 $\rmF_4/\Sp(3)\times SU(2)$\,, $\rmF_4/ \mathrm{Spin}(9)$\,,
 $\rmG_2/\SO(4)$\,,
\item or the non-compact duals of the symmetric spaces listed above.
\end{itemize}
\end{proposition}

\begin{proof}
We recall the following result (see~\cite[Ch.~IX,~Corollary~5.8]{He}):
In case $M$ is of non-compact type, $\Iso^0(M)$ contains the geodesic
reflections of $M$ if and only if $\frakk$ contains a maximal Abelian
subalgebra of $\fraki(M)$\,.

Furthermore, for every simply connected, irreducible symmetric space $M$ of
compact type we have the (non-compact) ``dual symmetric space'' $M^*$
(see~\cite[Ch.~V, \S2]{He}) and we carefully verify
that we may translate the above result one to one over to $M^*$\,.
Recalling the classification of symmetric
spaces (cf.~\cite[A.~4, Tables A.~1-A.~5]{BCO})), we now see that in Theorem~\ref{th:two-symmetric}
there are listed exactly the simply connected, irreducible symmetric spaces $M$ for
which $\frakk$ contains a maximal Abelian subalgebra of $\fraki(M)$\,. The result now follows.
\qed \end{proof}
\end{appendix}

\vspace{2cm}
\begin{center}
 \qquad
 \parbox{60mm}{Tillmann Jentsch\\
  Tachenbergstrasse 41\\
  70499 Stuttgart\\
  Germany\\[1mm]
  \texttt{tilljentsch@web.de}}
\end{center}

\begin{thebibliography}{1}
\bibliographystyle{alpha}
% \bibitem{ADM} Alekseevsky, D., Di Scala, A.J., Marchiafava, S.:
% \newblock Parallel K\"ahler submanifolds of quaternionic K\"ahler symmetric spaces.
% \newblock Thohoku Math. J. (2) {\bf 57}(4), 521--540 (2005).
% \bibitem{AD}
% D.~Alekseevsky, A.~J.~Di\,Scala, S.~Marchiafava: Parallel K\"ahler submanifolds of quaternionic K\"ahler symmetric spaces.
% Thohoku Math. J. (2) \textbf{57} (4), 521 -- 540 (2005).

\bibitem{BCO}
Berndt, J., Console, S., Olmos, C.:
\newblock Submanifolds and Holonomy, vol 434.
\newblock Chapman\;\&\;Hall, Boca Raton	 (2003).
% \bibitem{BCO}
% J.~Berndt, S.~Console, C.~Olmos: \textit{Submanifolds and holonomy/CRC Research
% Notes in Mathematics 434.} Chapman\;\&\;Hall (2003).

\bibitem{Be}
Besse, A.~L.~:
\newblock Einstein manifolds.
\newblock Springer, Berlin Heidelberg (1987).

% \bibitem[BR]{BR}
% E.~Backes, H.~Reckziegel:
% \newblock{On Symmetric Submanifolds of Spaces of Constant Curvature},
% \newblock{\em Math. Ann.} {\bf 263}, 421 -- 433 (1983).

% \bibitem[CMR]{CMR}
% A.~Carfagna D'Andrea, R.~Mazzocco, G.~Romani:
% \newblock{Some characterizations of 2-symmetric submanifolds in spaces of constant curvature},
% \newblock{\em Czech. Math. J.} {\bf 44}, 691 -- 711 (1994)

\bibitem{Co}Console,~S.:
\newblock Infinitesimally extrinsically homogeneous submanifolds.
\newblock Ann. Global Anal. Geom. {\bf 12}, 313 -- 334 (1994).

\bibitem{Cr}
Crittenden, R.~J.:
\newblock Minimum and conjugate points in symmetric spaces.
\newblock Canad. J. Math. {\bf 14}, 320 -- 328 (1962).

\bibitem{D}
Dombrowski, P.:
\newblock Differentiable maps into Riemannian manifolds of constant stable osculating rank, part 1.
\newblock J. Reine Angew. Math. {\bf 274/275}, 310--341 (1975).

\bibitem{Er}
Erbacher, ~J.:
\newblock Reduction of the codimension of an isometric immersion.
\newblock J. Diff. Geom. {\bf 5}, 333 -- 340  (1971).

% \bibitem[E1]{E}
% J.-H.~Eschenburg:
%  Higher rank curved Lie triples,
%  {\em J. Math. Soc. Japan} {\bf No 3}, 551 --564 (2002).

\bibitem{E2}
Eschenburg,~J.-H.:
\newblock Parallelity and extrinsic homogeneity.
\newblock Math. Z. {\bf 229}, 339 -- 347 (1998).

\bibitem{EH}
Eschenburg,~J.-H., Heintze,~E.:
\newblock Extrinsic symmetric spaces and orbits of s-representations.
\newblock Manuscripta Math. {\bf 88}, 517 -- 524 (1995).

\bibitem{Fe1}
Ferus, D.:
\newblock Immersions with parallel second fundamental form.
\newblock Math. Z. {\bf 140}, 87--93 (1974).

\bibitem{Fe2}
Ferus, D.:
\newblock Symmetric submanifolds of euclidian space.
\newblock Math. Ann. {\bf 247}, 81--93 (1980).

\bibitem{FP}
 Ferus, D., Pedit,~F.:
 \newblock Curved flats in symmetric spaces.
 \newblock Manuscripta Math. {\bf 91}, 445--454 (1996).


\bibitem{GHV}
Greub,~W. Halperin,~S., Vanstone.~R.:
\newblock Connections, Curvature and Cohomology, vol. I-III.
\newblock Academic Press, New York (1976).


\bibitem{GT}
Thorbergsson,~G., Gorodski,~C.:
\newblock Representations of compact Lie groups and the osculating spaces of their orbits.
\newblock arXiv:math/0203196v1 (2002).

\bibitem{He}
S.~Helgason:
\newblock Differential Geometry, Lie Groups and Symmetric Spaces.
\newblock American Mathematical Society {\bf 34} (2001).

% \bibitem[HPTT]{HPTT}
% E.~Heintze, R.~S. ~Palais, C.~L.~Terng and G.~Thorbergsson:
%  Hyperpolar actions and k-flat extrinsically homogeneous spaces,
%  {\em J. Reine Angew. Math.} \textbf{No 454}, 163 -- 179 (1994).

\bibitem{J1}
Jentsch,~T.:
\newblock The extrinsic holonomy Lie algebra of a parallel submanifold.
\newblock Ann. Global Anal. Geom. {\bf 38}, 335 -- 371 (2010).

% \bibitem[J2]{J2}
% T.~Jentsch:
%  Parallel submanifolds with an intrinsic product structure,
%  {\em	 arXiv:0911.3857}.

\bibitem{JR}
Jentsch,~T., Reckziegel,~H.:
\newblock Submanifolds with parallel second fundamental form studied via the Gau{\ss} map.
\newblock Ann. Global Anal. Geom. {\bf 29}, 51 -- 93 (2006).

\bibitem{Kn}
Knapp,~A.~W.:
\newblock Lie Groups Beyond an Introduction. Birkh{\"a}user, Boston (1996).

\bibitem{Klg}
Klingenberg, W.:
\newblock Riemannian geometry.
\newblock Walter de Gruyter, Berlin New York (1982).

\bibitem{Kl}
\newblock Klein,~S.: Totally geodesic submanifolds of the complex quadric.
\newblock Differential geometry and its applications {\bf 26} (1), 79 -- 96 (2008).

% \bibitem[Ko]{Ko}
% M.~Kon:
%  On some complex submanifolds in Kaehler manifolds,
%  {\em Can. J. Math.}\textbf{ 26} 1442 -- 1449 (1974).

\bibitem{KN}
Kobayashi, S., Nomizu, K.:
\newblock Foundations of Differential Geometry vols. 1,2.
\newblock Interscience Publications, New York (1963/1969).
%
%\bibitem{La}
%S.~Lang:
%\newblock Algebra 3rd edition.
%\newblock Addison Wesley (1993).

% \bibitem[MO]{MO}
% S.~Maeda and Y.~Ohnita:
%  Helical geodesic immersions into complex space forms,
%  {\em Geom. Dedicata} \textbf{ 30}, 93 -- 114, (1989).

\bibitem{MT}
Mashimo, K., Tojo, K.:
\newblock Circles in Riemannian symmetric spaces.
\newblock Kodai Math. J. {\bf 22}, 1 -- 14 (1999).

% \bibitem[Mo]{Mo}
% J.~D.~Moore:
%  On isometric immersions of Riemannian products,
%  {\em J. Differential Geom.} \textbf{ 5}, 159 -- 168 (1971).

% \bibitem[No]{No}
% Stefan Noelker:
%  Isometric immersions from warped products,
%  {\em Diff. Geom. Appl.} \textbf{ 6}, 1 -- 30 (1996).
% \bibitem[NT]{NT}
% Nakawaga, H., Takagi, R.\;:
%  On locally symmetric Kaehler submanifolds in a complex projective space
%  {\em J. Math. Soc. Japan} \textbf{ 28}, 638 -- 667 (1976).

% \bibitem[N1]{N}
% H.~Naitoh:
%  Totally real parallel submanifolds in $\rmP^n(\C)$\,,
%	{\em Tokyo J. Math.} \textbf{ 4}, 279 -- 306 (1981).

% \bibitem[N2]{N1}
% H.~Naitoh:
%  Parallel submanifolds of complex space forms,
%	{\em Nagoya Math. J.} \textbf{ 90}, 85 -- 117 (1983).

\bibitem{N2}
Naitoh,~H.:
\newblock Symmetric submanifolds of compact symmetric spaces.
\newblock Tsukuba J. Math. {\bf 10}, 215 -- 242 (1986).

\bibitem{NY}
Nomizu,~K., Yano,~K.:
\newblock On circles and spheres in Riemannian geometry.
\newblock Math. Ann. {\bf 210}, 163 -- 170 (1974).
%
%\bibitem{OS}
%Olmos,~C., Sanchez,~C.:
%\newblock A geometric characterization of the geometry of s-representations.
%\newblock J. reine angew. Math. {\bf 420}, 195 -- 202 (1991).

% \bibitem[Pe]{Pe}
% P.~Petersen:
%  Riemannian geometry,
%  {\em Springer} (1991).
%
%\bibitem{Po}
%Poor,~W.~A.:
%\newblock Differential Geometric Structures.
%\newblock McGraw-Hill, New York (1981).

\bibitem{Q}
Quast,~P.:
\newblock `Spindles' in symmetric spaces.
\newblock J. Math. Soc. Japan {\bf Vol. 58, 4}, 985 –- 994 (2006).


% \bibitem[Si]{Si}
% J.~Simons:
%  On the transitivity of holonomy systems,
%  {\em Ann. of Math.} \textbf{ 76}, 213 -- 234 (1962).

\bibitem{St}
Str{\"u}bing, W.:
\newblock Symmetric submanifolds of Riemannian manifolds.
\newblock Math. Ann. {\bf 245}, 37--44 (1979).


% \bibitem[Ta]{Ta}
% M.~Takeuchi:
%  Parallel submanifolds of space forms, Manifolds and Lie groups,
%  In honor of Yozo Matsushima (J. hano et al., eds) {\em Birkh{\"a}user, Boston} 429 -- 447 (1981).

\bibitem{Ts}
Tsukada, K.:
\newblock Parallel submanifolds of Hermitian symmetric spaces.
\newblock Math. Z. {\bf 190}, 129--150 (1985).

% \bibitem[Ts2]{Ts2}
% K.~Tsukada:
%  Totally geodesic submanifolds and curvature-invariant subspaces,
%  {\em Kodai Math. J.} \textbf{ 19}, 395 -- 437  (1996).

% \bibitem[Ts3]{Ts3}
% K.~Tsukada:
%  Parallel submanifolds of Caley plane,
%  {\em Sci. rep. Niigata Univ.} \textbf{ A 21} 19 -- 32 (1985).

\bibitem{Var}
V.~S.~Varadarajan:
\newblock Lie groups, Lie algebras, and Their Representations.
\newblock Springer, New York-Berlin-Heidelberg-Tokyo (1984).

\end{thebibliography}
\end{document}